\numberwithin{equation}{section}
\def\Im{\mathop{\rm Im}\nolimits}
\def\Re{\mathop{\rm Re}\nolimits}
\def\ds{\displaystyle}
\def\ord{\mathop{\rm ord}\nolimits}
\def\R{\mathbb R}
\def\C{\mathbb C}
\def\N{\mathbb N}
\def\3|{\|\hspace*{-1pt}|}
\newcommand{\afrac}[2]{\genfrac{}{}{0pt}{1}{#1}{#2}}
\newcommand{\beqsn}{\arraycolsep1.5pt\begin{eqnarray*}}
\newcommand{\eeqsn}{\end{eqnarray*}\arraycolsep5pt}
\newcommand{\beqs}{\arraycolsep1.5pt\begin{eqnarray}}
\newcommand{\eeqs}{\end{eqnarray}\arraycolsep5pt}
\newtheorem{Th}{Theorem}[section]
\newtheorem{Rem}[Th]{Remark}
\newtheorem{Ex}[Th]{Example}
\newtheorem{Lemma}[Th]{Lemma}
\newtheorem{Def}[Th]{Definition}
\title{Semilinear p-evolution equations in Sobolev spaces}
\author[Ascanelli]{Alessia Ascanelli}
\address{Alessia Ascanelli\\
Dipartimento di Matematica e Informatica\\Universit\`a di Ferrara\\
Via Machiavelli n.~30\\
44121 Ferrara\\
Italy}
\email{alessia.ascanelli@unife.it}
\author[Boiti]{Chiara Boiti}
\address{Chiara Boiti\\
Dipartimento di Matematica e Informatica \\Universit\`a di Ferrara\\
Via Ma\-chia\-vel\-li n.~30\\
44121 Ferrara\\
Italy}
\email{chiara.boiti@unife.it}
\begin{document}

\keywords{non-linear evolution equations, well-posedness in Sobolev spaces,
Nash-Moser theorem}
\subjclass[2010]{Primary 35G25; Secondary 46A61}

\begin{abstract}
We prove local in time well-posedness in Sobolev spaces of the Cauchy problem
for semi-linear $p$-evolution equations of the first order with real principal
part, but complex valued coefficients for the lower order
terms, assuming decay conditions on the imaginary parts as $|x|\to\infty$.
\end{abstract}

\maketitle
\markboth{\sc Semilinear p-evolution equations in Sobolev spaces}{\sc
A.~Ascanelli and C.~Boiti}

\section{Introduction and main result}
Given an integer $p\geq 2$, we consider the Cauchy problem
\beqs
\label{2}
\begin{cases}
P_u(D)u(t,x)=f(t,x),&(t,x)\in[0,T]\times\R\cr
u(0,x)=u_0(x),&x\in\R
\end{cases}
\eeqs
for the semi-linear $p$-evolution operator of the first order
\beqs
\label{1}
P_u(D)u=P(t,x,u(t,x),D_t,D_x)u:=&&D_tu +a_p(t)D_x^pu+\ds\sum_{j=0}^{p-1}a_j(t,x,u)
D_x^ju,
\eeqs
where $D=\frac 1i \partial$, $a_p\in C([0,T];\R)$, and $a_j\in C([0,T];
C^\infty(\R\times \C))$ with $x\mapsto a_j(t,x,w)\in{\mathcal B}^\infty(\R)$, for
$0\leq j\leq p-1$
(${\mathcal B}^\infty(\R)$ is the space of complex valued functions
which are bounded on $\R$ together with all their derivatives).

The condition that $a_p$ is real valued means that the principal symbol
(in the sense of Petrowski) of $P$ has the real characteristic
$\tau=-a_p(t)\xi^p$; by the Lax-Mizohata
theorem, this is a necessary condition to have a unique
solution, in Sobolev spaces, of the Cauchy problem \eqref{2} in a
neighborhood of $t=0$, for any $p\geq1$ (cf. \cite{M}).

The condition that $a_j(t,x,w)\in\C$ for $0\leq j\leq p-1$ imply some decay
conditions
on the coefficients as $|x|\to\infty$, because of the well-known necessary
condition of
Ichinose (cf. \cite{I1}) for $2-$evolution linear equations, and of the very
recent
necessary condition of \cite{ABZnec} for $p$-evolution linear equations with
arbitrary
$p\geq 2$.

We give the main result of this paper, Theorem \ref{th1}, for the
  operator $P$ in \eqref{1}, i.e. under the condition that $a_p$ does not
  depend on the monodimensional space variable $x\in\R$, only for simplicity's 
sake.
  Indeed, we can generalize Theorem \ref{th1} to the case
  $a_p=a_p(t,x)$, $x\in\R$: see Theorem~\ref{thxxx}.
Moreover, Theorems \ref{th1} and \ref{thxxx} can be generalized to the case
   $x\in\R^n$, up to some technical complications, following the ideas of
  \cite{KB}, \cite{CR2} and \cite{CC2}: see Remark \ref{Rn}.

 The monodimensional problem for either operator \eqref{1} or \eqref{Px}
  is of interest by itself; for instance, in the case $p=3$ our model
  recovers equations of Korteweg-de Vries type, widely used to describe
  the propagation of monodimensional waves of small amplitudes in
  waters of constant depth.

The original Korteweg-de Vries equation is
\beqs
\label{cdv}
\partial_tu=\frac32\sqrt{\frac gh}\partial_x\left(\frac12u^2+\frac23
\alpha u+\frac13\sigma\partial_x^2u\right),
\eeqs
where $u$ represents the wave elevation with respect to the water's surface,
$g$ is the gravity constant, $h$ the (constant) level of
water, $\alpha$ a fixed small constant and
\beqsn
\sigma=\frac{h^3}{3}-\frac{Th}{\rho g},
\eeqsn
with $T$ the surface tension, $\rho$ the density of the fluid.
The operator in \eqref{cdv} is of the form \eqref{1} with constant
coefficients $a_p=a_3$ and $a_2=0$, and with $a_1=a_1(u)$.

Assuming that the water's level $h$ depends on $x$, we are lead to
an operator of the form \eqref{Px}. Our model can so be applied to study
the evolution of the wave when the seabed is variable.

The $n-$dimensional problem can find applications in the study
of Schr{\"o}dinger type equations ($p=2$), and of higher order equations  that
can be factorized into the product of Schr{\"o}dinger type equations, as
for instance the Euler-Bernoulli equation of a vibrating plate 
\[\partial_t^2u+a^2(t)\Delta_x^2u+\ds\sum_{|\alpha|\leq 3}b_\alpha(t,x)
\partial_x^\alpha u=0\]
(vibrating beam in the case $n=1$).
The above operator factorizes into the product 
$(\partial_t-ia(t)
\Delta_x-b(t,x,\partial_x))(\partial_t+ia(t)\Delta_x+b(t,x,\partial_x))$ of
two (pseudo-differential) Schr{\"o}dinger type operators, modulo terms of
order zero.

The aim of this paper is to give sufficient decay conditions on the
coefficients of $P_u(D)$ in order that the Cauchy problem \eqref{2} is
locally in time
well-posed in $H^\infty$.

Results of $H^\infty$ well-posedness of the Cauchy problem for linear
$p$-evolution equations of the first order
\beqs
\label{1lin}
D_tu +a_p(t)D_x^pu+\ds\sum_{j=0}^{p-1}a_j(t,x)D_x^ju=f(t,x),
\eeqs
or for linear $p$-evolution equations of higher order,
have already been obtained, first for real valued (or complex valued,
with imaginary part not depending on $x$) coefficients (see for
instance \cite{A2, AC} and the references therein), then for complex
valued coefficients depending on the
space variable $x$
under suitable decay conditions on the coefficients as $|x|\to+\infty$
(see \cite{I2,KB,CC1,ABZ,AB1} for equations of the form \eqref{1lin},
\cite{AB2,ACC,CR1,CR2,T1} for higher order equations, and \cite{ACa}
for equation \eqref{1lin} in a different framework). Among all these
results, in the present paper
we shall need an extension of the following theorem of
\cite{ABZ} (see Theorem~\ref{iop} below):
\begin{Th}
\label{thabz}
Let us consider the operator (\ref{1lin}) with $a_p\in C([0,T];\R)$, $a_p(t)\geq0\ \forall t\in[0,T]$ and
$a_j\in C([0,T];{\mathcal B}^\infty)$ for $0\leq j\leq p-1$. Suppose that there exists $C>0$ such that $\forall(t,x)\in[0,T]\times\R$:
\beqs
\label{3}
\quad&&|\Re D_x^\beta a_j(t,x)|\leq C a_p(t),\qquad 0\leq \beta\leq j-1,\ 3\leq j\leq p-1,\\
\label{4}
&&|\Im D_x^\beta a_j(t,x)|\leq\frac{Ca_p(t)}{\langle
x\rangle^{\frac{j-[\beta/2]}{p-1}}},\qquad 0\leq\left[\frac\beta2\right]\leq j-1,\ 3\leq j\leq p-1,\\
\label{5}
&&|\Im a_2(t,x)|\leq \frac{Ca_p(t)}{\langle x\rangle^{\frac{2}{p-1}}},
\\
\label{6'}
&&|\Im a_1(t,x)|+|\Im D_x a_2(t,x)|\leq
\frac{Ca_p(t)}{\langle x\rangle^{\frac{1}{p-1}}},
\eeqs
where $[\beta/2]$ denotes the integer part of $\beta/2$ and
$\langle x\rangle:=\sqrt{1+x^2}$.

Then, the Cauchy problem associated to equation \eqref{1lin} with
$u(0,x)=u_0(x)$,
$x\in\R$,
is well-posed in $H^\infty$ (with loss of derivatives). More precisely,
there exists a positive constant $\sigma$ such that for all
$f\in C([0,T];H^s(\R))$ and $u_0\in H^s(\R)$ there is a unique solution
$u\in C([0,T];H^{s-\sigma}(\R))$ which satisfies the following energy
estimate:
\beqs
\label{Eold}
\|u(t,\cdot)\|^2_{s-\sigma}\leq C_s\left(\|u_0\|^2_s+
\int_0^t\|f(\tau,\cdot)\|^2_s \,d\tau\right)\qquad
\forall t\in[0,T],
\eeqs
for some $C_s>0$, with $\|\cdot\|_s=\|\cdot\|_{H^s}$.
\end{Th}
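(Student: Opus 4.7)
The strategy is the classical Kajitani--Ichinose conjugation scheme for $p$-evolution operators: reduce the Cauchy problem for $P$ to one for a conjugated operator $e^\Lambda P e^{-\Lambda}$ whose imaginary part at the symbolic level is bounded from below by a constant, and then derive an $L^2$ energy estimate via the sharp G{\aa}rding inequality. The loss of derivatives $\sigma$ in \eqref{Eold} reflects the fact that $e^{\pm\Lambda}$ has finite positive order.

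First, I would construct $\Lambda(t,x,\xi)=\sum_{j=1}^{p-1}\lambda_j(t,x,\xi)$, where each $\lambda_j$ is built from the assumed decay of $\Im a_j$ (and of its low-order $x$-derivatives when $j\geq 3$). A prototypical choice is
\[\lambda_j(t,x,\xi):=M_j\,\chi(\xi)\int_0^x\psi\!\left(\frac{\langle y\rangle}{\langle\xi\rangle^{1/(p-1)}}\right)\langle y\rangle^{-\frac{j}{p-1}}\,dy\cdot\langle\xi\rangle^{j-1},\]
with $\chi$ a smooth cut-off vanishing near $\xi=0$, $\psi$ a localizer on the region where the integrand is effective, and $M_j>0$ large constants to be fixed inductively for $j=p-1,p-2,\dots,1$. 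By \eqref{4}--\eqref{6'}, each integral is uniformly bounded in $x$ by $C\langle\xi\rangle^{1/(p-1)}$, so $\Lambda$ belongs to a suitable class of symbols of finite order and $e^{\pm\Lambda(t,x,D_x)}$ defines continuous maps $H^s(\R)\to H^{s\mp\sigma}(\R)$ for some $\sigma>0$.

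Next, I would compute $P_\Lambda:=e^\Lambda P\,e^{-\Lambda}$ via the asymptotic expansion of its symbol. The principal symbol $\tau+a_p(t)\xi^p$ is unchanged; the first subprincipal correction comes from the Poisson bracket $p\,a_p(t)\xi^{p-1}\partial_x\Lambda$, which by construction of $\lambda_{p-1}$ compensates $\Im a_{p-1}(t,x)\xi^{p-1}$ once $M_{p-1}$ is chosen large enough (here the sign hypothesis $a_p(t)\geq 0$ is essential). Iterating downward, the brackets produced by $\lambda_{p-2},\dots,\lambda_1$ successively absorb $\Im a_{p-2},\dots,\Im a_1$ modulo zeroth order bounded remainders, thanks precisely to the exponents $(j-[\beta/2])/(p-1)$ in \eqref{4} and \eqref{6'}; conditions \eqref{3} and \eqref{5} keep the residual real subprincipal part bounded. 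One concludes that $\Im(\text{symbol of }P_\Lambda)(t,x,\xi)\geq -C$. Applying sharp G{\aa}rding to $P_\Lambda$, with $v:=e^\Lambda u$ and $g:=e^\Lambda f$, produces
\[\frac{d}{dt}\|v(t,\cdot)\|_0^2\leq C\bigl(\|v(t,\cdot)\|_0^2+\|g(t,\cdot)\|_0^2\bigr),\]
and Gronwall's lemma delivers an $L^2$ bound for $v$. Undoing the conjugation via the continuity of $e^{\pm\Lambda}$ yields \eqref{Eold}, while existence and uniqueness of $u\in C([0,T];H^{s-\sigma})$ follow by the standard approximation/duality argument from this a priori bound.

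The main obstacle is the inductive tuning of the constants $M_j$ alongside the symbolic computation of the conjugation: at each step one must check that the commutators of $\lambda_k$ with the lower order coefficients and with the already chosen $\lambda_{k+1},\dots,\lambda_{p-1}$ produce terms that are either absorbed by the next choice of $M_{k-1}$ or are genuine zeroth order remainders. Matching the fractional decay rates in \eqref{4} and \eqref{6'} to the orders generated by $\partial_x\Lambda$ is delicate; in particular the quantity $[\beta/2]$ appearing in \eqref{4} provides exactly the book-keeping needed to close the hierarchy of cancellations.
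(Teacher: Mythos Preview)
Your overall strategy—conjugate by $e^{\Lambda}$ with $\Lambda=\sum\lambda_j$ built from the decay of $\Im a_j$, then run the energy method—is exactly the scheme the paper follows (via \cite{ABZ}). However, there is a genuine gap in the passage from the symbolic bound to the $L^2$ estimate, and a secondary error in your formula for $\lambda_j$.

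The substantive gap is your claim that once ``$\Im(\text{symbol of }P_\Lambda)\geq -C$'', a single application of sharp G{\aa}rding yields $\tfrac{d}{dt}\|v\|_0^2\leq C(\|v\|_0^2+\|g\|_0^2)$. After conjugation the self-adjoint part of the spatial operator is still of order $p-1$; sharp G{\aa}rding on an order-$(p-1)$ symbol with nonnegative real part only gives $\Re\langle\,\cdot\,w,w\rangle\geq -c\|w\|_{(p-2)/2}^2$, which closes the $L^2$ estimate only for $p=2$. For $p\geq3$ the paper must instead apply the sharp G{\aa}rding \emph{theorem} (with explicit remainder, cf.\ Theorem~\ref{thA1} and \eqref{A1}) \emph{iteratively}: at level $p-1$ one chooses $M_{p-1}$, splits off a positive operator $Q_{p-1}$, and feeds the order-$(p-2)$ remainder $R_{p-1}$ into the next level, where $M_{p-2}$ is chosen; this cascades down to order $3$. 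At order $2$ one needs the Fefferman--Phong inequality (gaining two derivatives rather than one), and at order $1$ the sharp G{\aa}rding inequality finishes the job. The delicate point—which you do sense in your last paragraph—is verifying that each remainder $R_{p-k}$ lands with the correct decay $\langle x\rangle^{-(p-j)/(p-1)}$ at each lower level $p-j$ and depends only on the already-fixed $M_{p-1},\dots,M_{p-k}$, so that the inductive choice of constants is not circular.

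As a secondary matter, your explicit $\lambda_j$ is mis-scaled: with the cutoff at $\langle y\rangle\sim\langle\xi\rangle^{1/(p-1)}$ and the factor $\langle\xi\rangle^{j-1}$, your $\lambda_{p-1}$ grows like $\langle\xi\rangle^{p-2}\log\langle\xi\rangle$, so $e^{\Lambda}$ is not a pseudodifferential operator of finite order. The correct scaling (cf.\ \eqref{26}) localizes at $\langle y\rangle\sim\langle\xi\rangle_h^{p-1}$ and carries the factor $\langle\xi\rangle_h^{-(p-j)+1}$, which makes $|\Lambda|\leq C+\delta\log\langle\xi\rangle_h$ and hence $e^{\Lambda}\in S^\delta$; this is precisely what produces the finite loss $\sigma=2\delta$ in \eqref{Eold}.
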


\begin{Rem}
\begin{em}
Condition \eqref{4} with $j=p-1,\ \beta=0$, i.e.
$$|\Im a_{p-1}(t,x)|\leq Ca_p(t)\langle x\rangle^{-1},$$
is strictly consistent with the necessary condition
\beqsn
\exists M,N>0:\quad \sup_{x\in\R}\min_{0\leq\tau\leq t\leq T}\int_{-\varrho}^\varrho
\Im a_{p-1}(t,x+p a_p(\tau)\theta)d\theta\leq M\log(1+\varrho)+N,\quad
\forall \varrho>0
\eeqsn
for $H^\infty$ well-posedness of the Cauchy problem associated to
equation \eqref{1lin}, proved in \cite{ABZnec}.
\end{em}
\end{Rem}

As far as we know, semi-linear equations $P_u(D)u=f$ of the form
\eqref{1}, or of higher order, have been considered
in the case of complex valued coefficients with imaginary part not
depending on $x$
(see, for instance, \cite{AZ}), or in the hyperbolic case (see, for
instance, \cite{A,CNT}). Recently, we considered in \cite{ABZpad} semi-linear 3-evolution
equations of the first order and in \cite{AB3} a 2-evolution equation of
order 2 that generalizes the Boussinesq equation.
We gave sufficient decay conditions on the coefficients for $H^\infty$
well-posedness of the Cauchy problem (see the comments after Theorem \ref{th1}).

Here we consider the general case ($p\geq2$) of non-linear $p$-evolution
equations of the first order, proving the following:

\begin{Th}
\label{th1}
Let us assume that there exist constants $C_p>0$ and $C>0$ and a function
$\gamma:\ \C\to\R^+$ of class $C^7$
such that the coefficients of the semi-linear equation \eqref{1} satisfy
for all $(t,x,w)\in[0,T]\times\R\times\C$:
\beqs
\label{ap}
&&\quad a_p(t)\geq C_p,
\\
\label{im}
&&\quad \ |\Im (D_x^\beta a_j)(t,x,w)|\leq
\frac{C \gamma(w)}{\langle x\rangle^{\frac{j-[\beta/2]}{p-1}}},\quad 0\leq
\left[\frac\beta 2\right]\leq j-1,\ 3\leq j\leq p-1,
\\
\label{re}
&&\quad \ |\Re (D_x^\beta a_j)(t,x,w)|\leq C \gamma(w)\qquad 0\leq \beta\leq j-1,\
3\leq j\leq p-1,
\\
\label{dew}
&&\quad \ | (D_w^\gamma D_x^\beta a_j)(t,x,w)|\leq
\frac{C \gamma(w)}{\langle x\rangle^{\frac{j-[(\gamma+\beta)/2]}{p-1}}},\quad
\gamma\geq 1, \beta\geq 0, \left[\frac{\gamma+\beta} 2\right]\leq j-1,\
3\leq j\leq p-1,
\\
\label{Rea2}
&&\quad \ |\Re a_2(t,x,w)|\leq C\gamma(w),
\\
\label{a2}
&&\quad \ |\Im a_2(t,x,w)|\leq \frac{C\gamma(w)}{\langle x\rangle^{\frac{2}{p-1}}},\\
\label{a1}
&&\quad \ |\Im a_1(t,x,w)|+|\Im D_x a_2(t,x,w)|+|D_w a_2(t,x,w)|\leq
\frac{C\gamma(w)}{\langle x\rangle^{\frac{1}{p-1}}}.
\eeqs
Then the Cauchy problem \eqref{2} is locally in time well-posed in $H^\infty$:
for all $f\in C([0,T];H^\infty(\R))$ and $u_0\in H^\infty(\R)$, there
exists $0<T^*\leq T$ and a unique solution $u\in C([0,T^*];
H^\infty(\R))$ of \eqref{2}.
\end{Th}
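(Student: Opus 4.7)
The combination of a nonlinear $u$-dependence in the lower-order coefficients $a_j(t,x,u)$ and the inevitable loss of derivatives $\sigma$ in the linear energy estimate \eqref{Eold} rules out any direct Banach fixed-point argument. The natural strategy, consistent with the keyword list, is a Nash-Moser iteration applied to the nonlinear map
\[
\Phi(u) := \bigl(P_u u,\ u(0,\cdot)\bigr)
\]
acting between Sobolev scales $C([0,T];H^s)\to C([0,T];H^{s-\sigma})\times H^s$, solving $\Phi(u)=(f,u_0)$ by a smoothed Newton scheme $u_{n+1}=u_n+h_n$, where $h_n$ is obtained from the linearized equation around a smoothed $S_{\theta_n} u_n$.

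The first analytic input is an extension of Theorem \ref{thabz} (the promised Theorem \ref{iop}) in which the coefficients $a_j(t,x)$ are replaced by $a_j(t,x,v(t,x))$ for a fixed parameter function $v$. The hypotheses \eqref{im}--\eqref{a1} are exactly what is needed so that $a_j(t,\cdot,v(t,\cdot))$ satisfies the Ichinose-type decay conditions \eqref{3}--\eqref{6'}, with $a_p(t)$ replaced by a constant controlled by $\gamma(v)$. The Fr\'echet derivative of $\Phi$ at $v$,
\[
\Phi'(v)h = \Bigl(D_t h + a_p D_x^p h + \sum_{j=0}^{p-1} a_j(t,x,v) D_x^j h + \sum_{j=0}^{p-1} (D_w a_j)(t,x,v)(D_x^j v)\,h,\ h(0,\cdot)\Bigr),
\]
is then a linear $p$-evolution operator in $h$; the extra zero-order perturbation $\sum_j (D_w a_j)(t,x,v)(D_x^j v)$ is controlled via \eqref{dew} and \eqref{a1} by $\gamma(v)\langle x\rangle^{-1/(p-1)}$ times derivatives of $v$, so it does not spoil the decay hypotheses. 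The extended theorem therefore delivers the tame energy estimate needed to invert $\Phi'(v)$ with the fixed loss of $\sigma$ derivatives.

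Next I would prove tame estimates for $\Phi$ and $\Phi'(v)^{-1}$ in the Sobolev scale, of the form
\[
\|\Phi(v)\|_s \leq C_s(1+\|v\|_{s+p}), \qquad \|\Phi'(v)^{-1}g\|_{s-\sigma} \leq C_s\bigl(\|g\|_s + \|v\|_{s+\sigma'}\|g\|_{s_0}\bigr),
\]
uniform for $v$ in a fixed $H^{s_0}$-ball. These follow from Moser-type product and composition inequalities applied to $a_j(t,x,v(t,x))$, using the $C^7$ regularity of $\gamma$ and the $\mathcal{B}^\infty$ behaviour of the $a_j$ in $x$ (the exponent $7$ presumably matches the number of derivatives lost in the Nash-Moser scheme). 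Feeding these bounds into a standard Nash-Moser inverse function theorem (Hamilton's or the Alinhac-G\'erard version) produces a solution $u\in C([0,T^\ast];H^\infty)$ on a shorter interval $[0,T^\ast]\subset[0,T]$, whose length is dictated by the requirement that $\gamma(u_n)$ and $\|u_n\|_{s_0}$ remain in a fixed ball throughout the iteration. Uniqueness on $[0,T^\ast]$ follows by applying the same linear energy estimate to the difference of two solutions.

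The main obstacle is the interplay between the fixed loss $\sigma$ from the linear theory and the nonlinear composition $a_j(t,x,u)$: Nash-Moser converges only when the tame constants for $\Phi'(v)^{-1}$ depend in a controlled way on low Sobolev norms (via $\gamma(v)$ and $\|v\|_{s_0}$) and linearly on $\|v\|_{s+\sigma'}$ at top order. Checking this uniformity along the iteration while keeping track of how \eqref{im}--\eqref{a1} propagate under substitution $w\mapsto u_n(t,x)$, and closing the smallness argument needed to pick $T^\ast$, is where the real technical work lies; everything else is essentially the packaging that turns the linear result into a nonlinear existence statement.
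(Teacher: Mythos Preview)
Your strategy is essentially the paper's: linearize, invoke the extension of Theorem~\ref{thabz} with explicit $u$-dependence in the constants, then apply Nash--Moser (Hamilton's abstract version, Theorem~\ref{thNM}). Two technical points are worth correcting. First, the paper does \emph{not} work with your map $\Phi(u)=(P_u u,\,u(0,\cdot))$ on $C([0,T];H^s)$, because $P_u u$ contains $D_t u$ and $\partial_t$ is not a tame map from $X=C^1([0,T];H^\infty)$ to itself; instead the equation is recast in integral form as $T(u)=0$ (see \eqref{heart}), so that $T:X\to X$ is smooth tame, and $DT(u)v=h$ is shown to be equivalent to a linear Cauchy problem of the same type as \eqref{2lin}. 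Your computation of the linearization is correct, and the zero-order perturbation you identify is exactly what the paper calls $\tilde a_0$. Second, the $C^7$ regularity of $\gamma$ has nothing to do with Nash--Moser derivative counting: it enters only through the Fefferman--Phong inequality applied at level $p-k=2$ in Step~4 of the linear estimate, where the constant depends on $\partial_\xi^\alpha\partial_x^\beta$ of the symbol with $|\alpha|+|\beta|\le 7$ (Remark~\ref{remA4}, via Lerner--Morimoto). Finally, the local-in-time existence is not obtained by controlling iterates in a ball but by taking the first-order Taylor approximant $w$ of the sought solution, showing $\|\partial_t Tw\|_n\le C\,t$, and approximating $Tw$ by a function $\phi_\varepsilon$ identically zero on $[0,\varepsilon]$; local invertibility of $T$ then yields $u$ with $Tu=\phi_\varepsilon\equiv 0$ on $[0,\varepsilon]$.
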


Notice that conditions \eqref{ap}-\eqref{a1} correspond exactly
    to \eqref{3}-\eqref{6'} for linear equations. In
    \cite{ABZpad}
we had to strengthen the leading condition \eqref{im} for $\beta=0$,
    $j=p-1$, $p=3$, requiring a decay of order $1+\varepsilon$
instead of order 1 on the sub-leading coefficient:
    $$|\Im a_2(t,x,w)|\leq\frac{C\gamma(w)}{\langle x\rangle^{1+\varepsilon}},
\quad
    \epsilon>0.$$
Also in \cite{AB3} such a faster decay is required.
This is because, both in \cite{ABZpad} and \cite{AB3}, we used a fixed point
argument to show the existence of a solution to the semi-linear equation.
To avoid such a stronger condition, here we make use of the Nash-Moser theorem. Inspired by \cite{DA}
(see also \cite{CNT}), the idea is
to linearize the equation, fixing $u\in C([0,T], H^\infty(\R))$ and solving
the linear Cauchy problem in the unknown $v(t,x)$
\beqs
\label{2lin}
\begin{cases}
P_u(D)v(t,x)=f(t,x),&(t,x)\in[0,T]\times\R\cr
v(0,x)=u_0(x),&x\in\R,
\end{cases}
\eeqs
and then apply the Nash-Moser theorem.

The paper is organized as follows. In Section \ref{sec3} we
briefly retrace the proof of Theorem \ref{thabz} for the linear problem \eqref{2lin} taking care of the
dependence on $u$ of the constants in the energy estimate \eqref{Eold}.
Then, in Section \ref{sec4}, we prove Theorem \ref{th1}.
Section \ref{gen} is devoted to generalizations of the main result
to the cases $a_p=a_p(t,x)$ and/or $x\in\R^n$.
Finally, in Appendix \ref{app:a}
we collect the main notions about tame spaces and the Nash-Moser
Theorem~\ref{thNM}, according to \cite{DA}, \cite{H}; in Appendix \ref{app:b} we recall the main tools used to write an energy estimate for the solution $v$ of the linear Cauchy problem \eqref{2lin}, i.e. Sharp-G{\aa}rding theorem (with remainder in explicit form, see \cite[Ch.~3, Thm.~4.2]{KG}) and
Fefferman-Phong ine\-qua\-li\-ty, \cite{FP}.


\section{The linearized problem}
\label{sec3}

In this section we consider, for a fixed function $u\in C([0,T]; H^\infty(\R))$,
the linear Cauchy problem \eqref{2lin} in the unknown $v$.
By Theorem \ref{thabz}, we know that this Cauchy problem is well-posed with
loss of derivatives in Sobolev spaces, and the solution $v$ satisfies the
energy estimate \eqref{Eold}, with a positive constant $C_s=C_s(u)$.
The aim of this section is to retrace as briefly as possible the proof of
Theorem \ref{thabz} taking care of the dependence of the operator $P_u(D)$ on $u$,
to compute precisely $C_s(u)$: this will be needed in the proof of
Theorem~\ref{th1}. The computation of $C_s(u)$ clarifies that to write an energy estimate of the form \eqref{Eold} it is enough to fix $u\in C([0,T];H^{4p-3}(\R))$.

The present section is devoted to prove the following:
\begin{Th}\label{iop}
  Under the assumptions of Theorem \ref{th1}, there exists $\sigma>0$ such
  that for every $u\in C([0,T];H^{4p-3}(\R))$,
  $f\in C([0,T]; H^s(\R))$ and $u_0\in H^s(\R)$, there exists a unique solution
  $v\in C([0,T]; H^{s-\sigma}(\R))$ of the Cauchy problem \eqref{2lin}
  and the following energy
  estimate is satisfied:
\beqs
\label{Eoldv}
\|v(t,\cdot)\|^2_{s-\sigma}\leq C_{s,\gamma}e^{(1+\|u\|^{4p-3}_{4p-3})t}
\left(\|u_0\|^2_s+
\int_0^t\|f(\tau,\cdot)\|^2_s \,d\tau\right)\quad
\forall t\in[0,T],
\eeqs
for some $C_{s,\gamma}>0$.
\end{Th}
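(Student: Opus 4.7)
The plan is to reduce Theorem~\ref{iop} to Theorem~\ref{thabz} by freezing the nonlinear argument $u$, and then to re-inspect each step of the linear proof in order to read off the precise dependence of the energy constant on $u$.

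\textbf{Step 1 (reduction).} For a fixed $u\in C([0,T];H^{4p-3}(\R))$, I set $\tilde a_j(t,x):=a_j(t,x,u(t,x))$ for $0\leq j\leq p-1$. The problem \eqref{2lin} then becomes a linear Cauchy problem of the form \eqref{1lin} with coefficients $\tilde a_j$. In order to apply Theorem~\ref{thabz} I must check that $\tilde a_j$ still satisfies the decay conditions \eqref{3}--\eqref{6'} (with $a_p(t)$ bounded below by $C_p$, so one is free to absorb $a_p$ into the constant).

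\textbf{Step 2 (Fa\`a di Bruno bookkeeping).} Differentiating $\tilde a_j$ in $x$ produces, via the chain rule, finite sums of products of $(D_w^\gamma D_x^{\beta'}a_j)(t,x,u(t,x))$ with $\beta'+\gamma\le \beta$, multiplied by products of derivatives $D_x^{\alpha_1}u,\dots,D_x^{\alpha_k}u$ with $\alpha_1+\cdots+\alpha_k=\beta-\beta'$. Hypothesis \eqref{dew} is tailored precisely so that each such factor carries the correct weight $\langle x\rangle^{-(j-[(\gamma+\beta')/2])/(p-1)}$; combining it with \eqref{im}--\eqref{a1} one obtains the required decay $\langle x\rangle^{-(j-[\beta/2])/(p-1)}$ for $\Im D_x^\beta\tilde a_j$ and the analogous bounds for real parts and for $\tilde a_2$, $\tilde a_1$. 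The derivatives $D_x^{\alpha_i} u$ are controlled in $L^\infty$ by $\|u\|_{4p-3}$ through Sobolev embedding; $\gamma(u(t,x))$ is bounded by $\sup_w\{\gamma(w):|w|\le C\|u\|_{4p-3}\}$, which I abbreviate as $C_\gamma(u)$.

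\textbf{Step 3 (revisit the linear proof).} I retrace the proof of Theorem~\ref{thabz} (sketched in \cite{ABZ}), paying attention to how many derivatives of $\tilde a_j$ enter. The core ingredients are: construction of a conjugating pseudo-differential weight $e^{\Lambda(t,x,D_x)}$ of order $p-1$, whose symbol is built from $\tilde a_j$ so that the imaginary parts of the lower-order terms are compensated after conjugation; then application of the sharp G{\aa}rding theorem (in the explicit form of \cite[Ch.~3, Thm.~4.2]{KG}) and of the Fefferman--Phong inequality~\cite{FP} to absorb the remainders; finally Gronwall. Each of these yields a constant depending on finitely many $L^\infty$-norms of $\tilde a_j$ and of $D_x^\beta \tilde a_j$. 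Counting: the symbol $\Lambda$ requires up to $p-2$ derivatives of $\tilde a_j$; the sharp G{\aa}rding and Fefferman--Phong remainders eat an additional block of derivatives of order $\sim 2p-1$; Moser's inequality converts these into control by $\|u\|_{H^{4p-3}}$, which I expect to be the sharp count yielding the exponent $4p-3$.

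\textbf{Step 4 (energy inequality and Gronwall).} All the estimates above produce a differential inequality of the form
\[
\frac{d}{dt}\|v(t,\cdot)\|_{s-\sigma}^2 \leq C_{s,\gamma}\bigl(1+\|u(t,\cdot)\|_{4p-3}^{4p-3}\bigr)\|v(t,\cdot)\|_{s-\sigma}^2+C_{s,\gamma}\|f(t,\cdot)\|_s^2,
\]
with $\sigma>0$ depending only on $p$ (and independent of $u$); Gronwall's lemma then delivers \eqref{Eoldv}. Uniqueness is an immediate consequence of the same estimate applied to $f\equiv0$ and $u_0\equiv0$. Existence is obtained by the usual parametrix/approximation procedure from \cite{ABZ}, whose convergence is governed by the same energy estimate.

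\textbf{Main obstacle.} The technical heart is the Fa\`a di Bruno bookkeeping in Step~2 combined with the derivative-counting in Step~3: one must verify that the decay assumption \eqref{dew} on the mixed $(w,x)$-derivatives of $a_j$ interacts correctly with the chain rule so that no $\langle x\rangle$-weight is lost, and then show that the conjugation plus sharp G{\aa}rding / Fefferman--Phong machinery needs no more than $4p-3$ Sobolev derivatives of $u$. The resulting polynomial growth in $\|u\|_{4p-3}$, inserted into Gronwall, will produce exactly the factor $e^{(1+\|u\|_{4p-3}^{4p-3})t}$ claimed in \eqref{Eoldv} and will be crucial for the Nash--Moser step in Section~\ref{sec4}.
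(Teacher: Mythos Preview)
Your proposal is correct and follows essentially the same route as the paper: freeze $u$, verify via the chain rule that the composite coefficients $\tilde a_j(t,x)=a_j(t,x,u(t,x))$ satisfy the linear decay hypotheses with $u$-dependent constants (this is the paper's Lemma~\ref{lemde}), then retrace the proof of Theorem~\ref{thabz} (conjugation by $e^\Lambda$, iterated sharp G{\aa}rding, Fefferman--Phong, Gronwall) while bookkeeping how many $x$-derivatives of $u$ each step consumes.

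Two small corrections to your description, so that you do not stumble in execution. First, the conjugating operator $e^\Lambda$ is of order $\delta>0$, not $p-1$, and its symbol is \emph{not} built from the $\tilde a_j$: it is assembled from fixed $\langle x\rangle$-weights \eqref{26} with free constants $M_{p-k}$ that are afterwards chosen large depending on the bounds of the $\tilde a_j$ (hence on $\gamma(u)$ and $\|u\|$). Second, the precise derivative count is not ``$\Lambda$ needs $p-2$ derivatives plus $\sim 2p-1$ from the remainders''; in the paper it arises as follows: the $j$-th iterated sharp-G{\aa}rding remainder $R^{j-1}(A_{p-1})|_{\mathrm{ord}(p-j)}$ carries $3(j-1)$ derivatives of $u$, the Fefferman--Phong constant at level~$2$ costs $7$ more symbol derivatives (Remark~\ref{remA4}), and the terms $A^{n,m}$ coming from the Neumann series for $(e^\Lambda)^{-1}$ add another $p-2$, giving the total $3(p-2)+7+(p-2)+\cdots$ which the paper packages as $4p-3$.
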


The proof of Theorem \ref{iop} is based on the energy method, after an
appropriate change of variable of the form
\beqs\label{cvar}
v(t,x)=e^{\Lambda(x,D)}w(t,x),
\eeqs
and makes use of the sharp-G{\aa}rding Theorem and the
Fefferman-Phong inequality, see Appendix~\ref{app:b}.

The operator $\Lambda$ is suitably constructed as a real valued,
invertible on $L^2$ operator with symbol $e^{\Lambda(x,\xi)}\in S^\delta$,
$\delta>0$. The change of variable \eqref{cvar} transforms the Cauchy
problem (\ref{2lin}) into the equivalent problem
\beqs
\label{6}
\begin{cases}
P_\Lambda(t,x,u(t,x), D_t,D_x) w(t,x)=f_\Lambda(t,x) \quad (t,x)\in [0,T]\times\R\\
w(0,x)=u_{0,\Lambda}(x) \quad x\in\R
\end{cases}
\eeqs
for $$P_\Lambda:=(e^\Lambda)^{-1}Pe^\Lambda,\quad
f_\Lambda:=(e^\Lambda)^{-1}f,\quad u_{0,\Lambda}:=(e^\Lambda)^{-1}u_0.$$
We are so reduced to show the well-posedness of (\ref{6}) in Sobolev spaces,
which is equivalent, since $e^\Lambda$ has order $\delta>0$, to the desired
well-posedness with loss of $\sigma=2\delta$ derivatives for the Cauchy
problem \eqref{2lin}.

The operator $\Lambda(x,D_x)$ is a pseudo-differential operator having symbol
\beqs
 \label{240}
\Lambda(x,\xi):=\lambda_{p-1}(x,\xi)+\lambda_{p-2}(x,\xi)+
\ldots+\lambda_1(x,\xi),
\eeqs
with
\beqs
\label{26}
\lambda_{p-k}(x,\xi):=M_{p-k}\omega\left(\frac\xi h\right)\int_0^x
\langle y\rangle^{-\frac{p-k}{p-1}}\psi\left(
\frac{\langle y\rangle}{\langle\xi\rangle_h^{p-1}}\right)dy
\langle\xi\rangle_h^{-k+1}\,,
\quad 1\leq k\leq p-1,
\eeqs
where $\langle \xi\rangle_h:=\sqrt{h^2+\xi^2}$ for $h\geq1$,
$\langle y\rangle:=\langle y\rangle_1$,
$M_{p-k}>0$ will be chosen large enough
throughout the proof, $\omega$ is a $C^\infty(\R)$ function such that
$\omega(y)=0$ for $|y|\leq 1$ and $\omega (y)=|y|^{p-1}/y^{p-1}$ for
$|y|\geq 2$, $\psi\in C^\infty_0(\R)$ is such that $0\leq\psi(y)\leq 1$ for
all $y\in\R$,
$\psi(y)=1$ for $|y|\leq\frac 12$, and $\psi(y)=0$ for $|y|\geq1$.

In the following Lemma we list the main properties of the symbols
$\lambda_k(x,\xi)$ and $\Lambda(x,\xi)$, referring to \cite{ABZ} for all
proofs.
\begin{Lemma}
  \label{lemma1} Let $\chi_{E}$ be the characteristic function of the
  set $E=\{\langle y\rangle \leq \langle\xi\rangle_h^{p-1}\}\subset\R^2$. We have that
\beqsn
|\lambda_{p-1}(x,\xi)|&\leq& M_{p-1}\log2+M_{p-1}(p-1)\log\langle\xi\rangle_h,
\\
|\lambda_{p-k}(x,\xi)|&\leq& M_{p-k}\frac{p-1}{k-1}
\langle x\rangle^{\frac{k-1}{p-1}}\langle \xi\rangle_h^{-k+1}\chi_{E}(x),
\quad 2\leq k\leq p-1;
\eeqsn
moreover, for every $\alpha\neq 0,\ \beta\neq 0$, there exist
$C_{\alpha,\beta}>0$ such that:
\beqs
\nonumber
|\partial_x^\beta\lambda_{p-k}(x,\xi)|&\leq& C_{0,\beta}
\langle x\rangle^{\frac{k-1}{p-1}-\beta}\langle\xi\rangle_h^{-k+1}\chi_{E}(x),
\quad 1\leq k\leq p-1,
\\
\nonumber
|\partial_\xi^\alpha\lambda_{p-1}(x,\xi)|&\leq& C_{\alpha,0}M_{p-1}
\langle\xi\rangle_h^{-\alpha}(1+\log\langle\xi\rangle_h\chi_{\{|\xi|<2h\}}),
\\
\label{PK2}
|\partial_\xi^\alpha\lambda_{p-k}(x,\xi)|&\leq& C_{\alpha,0} M_{p-k}
\langle x\rangle^{\frac{k-1}{p-1}}
\langle\xi\rangle_h^{-\alpha-k+1}\chi_{E}(x),\quad 2\leq k\leq p-1,
\\
\nonumber
|\partial_\xi^\alpha\partial_x^\beta\lambda_{p-k}(x,\xi)|&\leq&
C_{\alpha,\beta}\langle x\rangle^{\frac{k-1}{p-1}-\beta}
\langle\xi\rangle_h^{-\alpha-k+1}\chi_{E}(x),\quad1\leq k\leq p-1.
\eeqs
Again, there exist positive constants $C,\delta$ independent on $h$ and,
for every $\alpha\in\N_0:=\N\cup\{0\}$ and $\beta\in\N$, positive constants
$\delta_{\alpha,\beta}$, independent on $h$, such that:
\beqs
\label{7}
|\Lambda(x,\xi)|&\leq& C+\delta\log\langle\xi\rangle_h,
\\
\label{8}
|\partial_\xi^\alpha D_x^\beta\Lambda(x,\xi)|&\leq&
\delta_{\alpha,\beta}\langle\xi\rangle_h^{-\alpha}.
\qquad
\eeqs
\end{Lemma}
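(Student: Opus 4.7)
The plan is to verify each displayed bound by direct inspection of the defining formula \eqref{26}, exploiting the localization produced by the two cutoffs $\omega(\xi/h)$ and $\psi(\langle y\rangle/\langle\xi\rangle_h^{p-1})$, and then to obtain the estimates \eqref{7}--\eqref{8} on $\Lambda$ by summing the individual estimates over $1\le k\le p-1$. Since $|\omega|\le 1$, the $\omega$ factor is harmless for size estimates and only contributes when we differentiate in $\xi$.

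For the pointwise bounds, I would split according to whether $\langle x\rangle\le\langle\xi\rangle_h^{p-1}$ (i.e.\ $\chi_E=1$) or not. In the second regime, the cutoff $\psi(\langle y\rangle/\langle\xi\rangle_h^{p-1})$ already kills the integrand for $\langle y\rangle\ge\langle\xi\rangle_h^{p-1}$, so the integral in \eqref{26} equals its value at $x=\pm\langle\xi\rangle_h^{p-1}$. For $k=1$ the integrand is $\langle y\rangle^{-1}$, whose antiderivative is $\log\langle y\rangle$, and integrating up to $\langle\xi\rangle_h^{p-1}$ yields the bound $(p-1)\log\langle\xi\rangle_h+\log 2$. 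For $2\le k\le p-1$ the exponent $-(p-k)/(p-1)\in(-1,0]$ is integrable at $0$, and the antiderivative $\langle y\rangle^{1-(p-k)/(p-1)}=\langle y\rangle^{(k-1)/(p-1)}$ evaluated at $|x|\wedge\langle\xi\rangle_h^{p-1}$, multiplied by $\langle\xi\rangle_h^{-k+1}$, produces exactly the claimed $\langle x\rangle^{(k-1)/(p-1)}\langle\xi\rangle_h^{-k+1}\chi_E$.

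For the derivative estimates I would proceed by induction on $|\alpha|+|\beta|$. A single $\partial_x$ removes the integral sign and leaves the product $\langle x\rangle^{-(p-k)/(p-1)}\psi(\cdot)\omega(\cdot)\langle\xi\rangle_h^{-k+1}$, where the $\psi$ factor supplies the $\chi_E$; each further $\partial_x$ either falls on $\langle x\rangle^{-(p-k)/(p-1)}$ or on $\psi$, and both cases generate an extra $\langle x\rangle^{-1}$ (using $\psi'(\langle x\rangle/\langle\xi\rangle_h^{p-1})\cdot\partial_x(\langle x\rangle/\langle\xi\rangle_h^{p-1})\lesssim\langle x\rangle^{-1}\chi_E$, since $\psi'$ is supported on $\{\langle y\rangle\sim\langle\xi\rangle_h^{p-1}\}$). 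A single $\partial_\xi$ on the integral form differentiates $\omega(\xi/h)$, $\psi(\langle y\rangle/\langle\xi\rangle_h^{p-1})$ or $\langle\xi\rangle_h^{-k+1}$; each differentiation contributes one $\langle\xi\rangle_h^{-1}$, except the $\omega'(\xi/h)$ term which is supported in $\{|\xi|<2h\}$ and, for $k=p-1$, produces the extra logarithmic factor $1+\log\langle\xi\rangle_h\chi_{\{|\xi|<2h\}}$ because of the residual $\int_0^x\langle y\rangle^{-1}\psi\,dy$. Combining these observations by Leibniz gives the four displayed derivative estimates uniformly in $h\ge 1$.

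Finally, \eqref{7} follows because $|\Lambda|\le|\lambda_{p-1}|+\sum_{k=2}^{p-1}|\lambda_{p-k}|$ and the terms with $k\ge 2$ are bounded by constants on $E$ (using $\langle x\rangle^{(k-1)/(p-1)}\le\langle\xi\rangle_h^{k-1}$ there), so only the $\log\langle\xi\rangle_h$ term from $\lambda_{p-1}$ survives, with $\delta$ proportional to $M_{p-1}(p-1)$. For \eqref{8}, every $x$-derivative forces $\beta\ge 1$, so the logarithmic contribution disappears and each $\partial_x$ yields an $\langle x\rangle^{-1}$ factor which, together with $\chi_E$ and $\langle\xi\rangle_h^{-k+1}$, is bounded by $\langle\xi\rangle_h^{-\alpha}$ uniformly in $x$. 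The main obstacle in the proof is not any single estimate but the careful bookkeeping required to track the three distinct sources of $x$-decay and $\xi$-decay (the explicit weight $\langle y\rangle^{-(p-k)/(p-1)}$, the cutoff $\psi$, and the factor $\langle\xi\rangle_h^{-k+1}$) through mixed derivatives, and to ensure the constants $\delta_{\alpha,\beta}$ are independent of $h$; this is what makes it natural to defer to the detailed argument in \cite{ABZ}.
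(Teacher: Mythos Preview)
Your proposal is correct and matches the approach the paper defers to. Note that the paper does not actually prove Lemma~\ref{lemma1} in the text; immediately before the statement it writes ``referring to \cite{ABZ} for all proofs.'' Your sketch---direct inspection of the defining formula \eqref{26}, using the support properties of $\psi$ and $\omega$ to localize, computing the primitive of $\langle y\rangle^{-(p-k)/(p-1)}$ for the size bounds, and applying Leibniz for the derivative bounds---is exactly the standard computation carried out in \cite{ABZ}, so there is no alternative route to compare. One small slip: where you write ``for $k=p-1$'' in the discussion of the logarithmic term, you mean the function $\lambda_{p-1}$, which in the indexing $\lambda_{p-k}$ corresponds to $k=1$; the argument itself is unaffected.
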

By (\ref{7}) and (\ref{8}), it can be easily shown that the operator
$e^{\Lambda(x,D_x)}$ with symbol
$e^{\Lambda(x,\xi)}\in S^\delta$ is invertible for $h$ large enough, say
$h\geq h_0$, with inverse
\beqs
\label{sole}
(e^\Lambda)^{-1}=e^{-\Lambda}(I+R)
\eeqs
where $I$ is the identity operator and $R$ is the operator defined by means
of the convergent Neumann series $R=\sum_{n=1}^{+\infty}r^n$, with principal 
symbol
\beqsn
r_{-1}(x,\xi)=\partial_\xi\Lambda(x,\xi)D_x\Lambda(x,\xi)\in S^{-1}.
\eeqsn
Moreover, for $h\geq h_0$ we have
\beqs
\label{27}
&&|\partial_\xi^\alpha e^{\pm\Lambda(x,\xi)}|\leq C_\alpha
\langle\xi\rangle_h^{-\alpha}e^{\pm\Lambda(x,\xi)}
\qquad\forall\alpha\in\N_0,\\
\label{28}
&&|D_x^\beta e^{\pm\Lambda(x,\xi)}|\leq C_\beta
\langle x\rangle^{-\beta}e^{\pm\Lambda(x,\xi)}
\qquad\forall\beta\in\N_0.
\eeqs
Finally, denoting $iP=\partial_t+A(t,x, u(t,x), D_x),$ with
$$A(t, x,u(t,x), \xi)=ia_p(t)\xi^p+\sum_{j=0}^{p-1}ia_j(t,x,u(t,x))\xi^j,$$
we have that
 \beqs\label{ipilambda}
 iP_\Lambda=\partial_t+(e^{\Lambda(x,D_x)})^{-1}A(t,x,u, D_x)e^{\Lambda(x,D_x)}
 \eeqs
 with
\beqs\label{li}
(e^{\Lambda(x,D_x)})^{-1}A(t,x,u, D_x)e^{\Lambda(x,D_x)}&=&e^{-\Lambda(x,D_x)}A(t,x,u, D_x)
e^{\Lambda(x,D_x)}\\
\nonumber
&+&\sum_{m=0}^{p-2}\frac{1}{m!}\sum_{n=1}^{p-1-m}
e^{-\Lambda(x,D_x)}A^{n,m}(t,x,u, D_x)e^{\Lambda(x,D_x)}
\\
\nonumber
&+&A_0(t,x,D_x),
\eeqs
where $A_0(t,x,\xi)\in S^0$ and $A^{n,m}(t,x,u,\xi)=\partial_\xi^mr^n(x,\xi)
D_x^m(A(t,x,u,\xi))
\in S^{p-m-n}$ (see \cite[Lemma~2.6]{ABZ}).
The lower order terms in \eqref{li} have the same structure as the principal
term,
so the structure of the operator $(e^{\Lambda})^{-1}Ae^{\Lambda}$ is the same as
that of $e^{-\Lambda}Ae^{\Lambda}$.

In the proof of Theorem \ref{iop}, the following Lemma will be crucial:
\begin{Lemma}\label{lemde}
  Under the assumptions \eqref{im}, \eqref{re}, \eqref{dew} there exists a positive constant $C'$ such that for every fixed $u\in C([0,T]; H^{4p-3}(\R))$ the coefficients $a_j(t,x,u(t,x))$ of the operator $P_u(D)$ satisfy $\forall (t,x)\in [0,T]\times\R$:
\beqs
\label{euna}&&\qquad\ |\Re D_x^\beta(a_j(t,x,u(t,x)))|\leq
C'\gamma(u)(1+\|u\|_{1+\beta}^\beta),
\quad 1\leq \beta\leq j-1,\,3\leq j\leq p-1
\\
\label{edue}&&\qquad\ |\Im D_x^\beta(a_j(t,x,u(t,x)))|\leq
\frac{C'\gamma(u)}{\langle x\rangle^{\frac{j-[\beta/2]}{p-1}}}(1+\|u\|_{1+\beta}^\beta),
\quad \beta\geq 1,\, \left[\frac\beta2\right]\leq j-1,\, 3\leq j\leq p-1.
\eeqs
\end{Lemma}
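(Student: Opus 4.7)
The strategy is to expand $D_x^\beta[a_j(t,x,u(t,x))]$ by the Fa\`a di Bruno chain rule, apply \eqref{im}, \eqref{re}, \eqref{dew} termwise to the mixed $(x,w)$-derivatives of $a_j$, and control the resulting products of derivatives of $u$ by the one-dimensional Sobolev embedding $H^{k+1}(\R)\hookrightarrow L^\infty(\R)$ applied to $D_x^k u$.

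First I would write, for each fixed $(t,x)$,
\[
D_x^\beta[a_j(t,x,u(t,x))]=\sum c_{\beta_0,(m_k)}\,(D_x^{\beta_0}D_w^m a_j)(t,x,u(t,x))\prod_{k=1}^{\beta}\bigl(D_x^k u(t,x)\bigr)^{m_k},
\]
where the finite sum runs over $\beta_0\in\N_0$ and $(m_1,\ldots,m_\beta)\in\N_0^\beta$ satisfying $\beta_0+\sum_{k\geq1}k\,m_k=\beta$, with $m:=\sum_{k\geq 1}m_k$, and the $c_{\beta_0,(m_k)}$ are combinatorial constants.

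Next I would bound each $a_j$-factor by cases. If $m=0$, then $\beta_0=\beta$ and \eqref{re}, \eqref{im} give directly the two desired estimates on $\Re D_x^\beta a_j$ and $\Im D_x^\beta a_j$ with constant $C\gamma(u(t,x))$ and decay $\langle x\rangle^{-(j-[\beta/2])/(p-1)}$. If $m\geq 1$, hypothesis \eqref{dew} (used with its ``$\gamma$'' equal to my $m$ and its ``$\beta$'' equal to $\beta_0$) bounds the full modulus of $(D_x^{\beta_0}D_w^m a_j)(t,x,u(t,x))$ by $C\gamma(u(t,x))\langle x\rangle^{-(j-[(m+\beta_0)/2])/(p-1)}$. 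The relation $\beta_0+\sum k\,m_k=\beta$ together with $k\geq 1$ forces $m+\beta_0\leq\beta$, hence $[(m+\beta_0)/2]\leq[\beta/2]\leq j-1$; this simultaneously guarantees that \eqref{dew} is admissible and that the $\langle x\rangle$-decay is at least as strong as the target in \eqref{edue} (and is $\leq 1$, which suffices for the real-part estimate \eqref{euna}).

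To control the product of $u$-derivatives I use $\|D_x^k u(t,\cdot)\|_{L^\infty}\leq C\|u(t,\cdot)\|_{k+1}\leq C\|u\|_{1+\beta}$ for $1\leq k\leq\beta$, giving
\[
\prod_{k=1}^{\beta}|D_x^k u(t,x)|^{m_k}\leq C^m\|u\|_{1+\beta}^{m}\leq C'\bigl(1+\|u\|_{1+\beta}^\beta\bigr),
\]
since $m\leq\beta$. Combining this with the previous step and absorbing the finitely many combinatorial constants into $C'$ produces \eqref{euna} and \eqref{edue}. The highest Sobolev norm of $u$ ever invoked is $\|u\|_{1+\beta}\leq\|u\|_{p-1}$, comfortably controlled by the $H^{4p-3}$-regularity assumed on $u$. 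The main care is needed in the intermediate step: one must verify that both the admissibility of \eqref{dew} and the preservation of the correct decay exponent $(j-[\beta/2])/(p-1)$ follow from the single combinatorial identity $\beta_0+\sum k\,m_k=\beta$ (which yields $m+\beta_0\leq\beta$), so that no decay is lost in passing from $a_j$ to its composition with $u$; the rest is a routine chain-rule and Sobolev-embedding computation.
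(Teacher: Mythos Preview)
Your proposal is correct and follows essentially the same route as the paper: both expand $D_x^\beta[a_j(t,x,u(t,x))]$ via the Fa\`a di Bruno formula, apply \eqref{re}, \eqref{im} to the pure $x$-derivative term and \eqref{dew} to the mixed terms, and then control the products of derivatives of $u$ by the one-dimensional Sobolev embedding. The only cosmetic difference is that the paper inserts an arithmetic--geometric mean step before invoking Sobolev, whereas you bound each factor $|D_x^k u|\leq C\|u\|_{1+\beta}$ directly; your explicit verification that $m+\beta_0\leq\beta$ (hence $[(m+\beta_0)/2]\leq[\beta/2]\leq j-1$) makes the admissibility of \eqref{dew} and the preservation of the decay exponent more transparent than in the paper's account.
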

\begin{proof}
Let us compute for $3\leq j\leq p-1$ and $\beta\geq 1$ the derivative
\beqsn
D_x^\beta (a_j(t,x,u))=(D_x^\beta a_j)(t,x,u)+\ds
\sum_{\afrac{\beta_1+\beta_2=\beta}{\beta_2\geq 1}}c_\beta
\sum_{\afrac{r_1+\ldots+r_{q}=\beta_2}{r_i\geq1}}c_{q,r}
\partial^q_w (D_x^{\beta_1}a_j)(t,x,u)(D_x^{r_1}u)\cdots (D_x^{r_q}u)
\eeqsn
for some $c_\beta,c_{q,r}>0$.

From conditions \eqref{re} and \eqref{dew}, using the relationship between
geometric and arithmetic mean value and Sobolev's inequality
(see \cite[Ch.~3, Lemma~2.5]{KG})
for the fixed function $u(t)\in H^{4p-3}(\R)$, we immediately get that
for every  $3\leq j\leq p-1$, $\beta\geq 1$ and $n\geq 1$
\beqs
\nonumber
|\Re D_x^\beta (a_j(t,x,u))|&\leq& |\Re(D_x^\beta a_j)(t,x,u)|
\\
\nonumber
&&+\ds\sum_{\afrac{\beta_1+\beta_2=\beta}{\beta_2\geq 1}}c_\beta
\sum_{\afrac{r_1+\ldots+r_{q}=\beta_2}{r_i\geq1}}c_{q,r}|
(\partial^q_w D_x^{\beta_1}a_j)(t,x,u)|\cdot |D_x^{r_1}u|\cdots |D_x^{r_q}u|
\\
\nonumber
&\leq& C\gamma(u)\left(1+\ds\sum_{\afrac{\beta_1+\beta_2=\beta}{\beta_2\geq 1}}c_\beta
\sum_{\afrac{r_1+\ldots+r_{q}=\beta_2}{r_i\geq1}}c_{q,r}|D_x^{r_1}u|\cdots |D_x^{r_q}u|\right)
\\
\nonumber
&\leq&C\gamma(u)\left(1+\ds\sum_{\afrac{\beta_1+\beta_2=\beta}{\beta_2\geq 1}}c_\beta
\sum_{\afrac{r_1+\ldots+r_{q}=\beta_2}{r_i\geq1}}c_{q,r}\left( \frac{|D_x^{r_1}u|+\cdots
  +|D_x^{r_q}u|}{q}\right)^q\right)
\\
\nonumber
&\leq&C\gamma(u)\left(1+\ds\sum_{\afrac{\beta_1+\beta_2=\beta}{\beta_2\geq 1}}c_\beta
\sum_{\afrac{r_1+\ldots+r_{q}=\beta_2}{r_i\geq1}}c_{q,r}\|u\|^q_{n+\beta_2}\right)
\\
\label{VAR2}
&\leq & C'\gamma(u)(1+\|u\|^\beta_{n+\beta});
\eeqs
taking $n=1$ we get \eqref{euna}. On the other hand, from \eqref{im} and
\eqref{dew} we have, with similar computations, that for every $\beta\geq 1$
such that $[\beta/2]\leq j-1$, $3\leq j\leq p-1$ and $n\geq 1$
\beqs
\nonumber
|\Im(D_x^\beta (a_j(t,x,u))|&\leq& |\Im((D_x^\beta a_j)(t,x,u))|
\\
\nonumber
&&+\ds\sum_{\afrac{\beta_1+\beta_2=\beta}{\beta_2\geq 1}}c_\beta
\sum_{\afrac{r_1+\ldots+r_{q}=\beta_2}{r_i\geq1}}c_{q,r}|(\partial^q_w D_x^{\beta_1}a_j)
(t,x,u)|\cdot |D_x^{r_1}u|\cdots |D_x^{r_q}u|
\\
\nonumber
&\leq & \frac{C\gamma(u)}{\langle x \rangle^{\frac{j-[\beta/2]}{p-1}}}
\left(1+ \ds\sum_{\afrac{\beta_1+\beta_2=\beta}{\beta_2\geq 1}}c_\beta
\sum_{\afrac{r_1+\ldots+r_{q}=\beta_2}{r_i\geq1}}c_{q,r}|D_x^{r_1}u|\cdots |D_x^{r_q}u|\right)
\\
\label{VAR3}
&\leq & \frac{C'\gamma(u)}{\langle x \rangle^{\frac{j-[\beta/2]}{p-1}}}
(1+\|u\|^{\beta}_{n+\beta});
\eeqs
taking $n=1$ we have \eqref{edue}.
\end{proof}

\begin{proof}[Proof of Theorem \ref{iop}]
We divide the proof of Theorem \ref{iop} into the following steps:
\begin{itemize}
\item[{\bf Step 1.}]
Compute the symbol of the operator $e^{-\Lambda}Ae^\Lambda$ and show that its
terms of order $p-k$, $1\leq k\leq p-1$, denoted by
$\left.(e^{-\Lambda}Ae^\Lambda)\right|_{\ord(p-k)}$,
satisfy
\beqs
\label{stp3}
\left|\left.\Re (e^{-\Lambda}Ae^\Lambda)\right|_{\ord(p-k)}(t,x,u,\xi)\right|\leq
C_{(M_{p-1},\ldots,M_{p-k})}(u)\langle x\rangle^{-\frac{p-k}{p-1}}
\langle\xi\rangle_h^{p-k}
\eeqs
for a positive constant
\beqsn
C_{(M_{p-1},\ldots,M_{p-k})}(u)=C_{M_{p-1},\ldots,M_{p-k}}(1+\gamma(u))
(1+\|u\|_{1+(k-1)}^{k-1})
\eeqsn
depending on $u$, $M_{p-1},\ldots,M_{p-k}$ and not on $M_{p-k-1},\ldots,M_1$.
This allows the constants $M_{p-1},\ldots,M_1$ to be chosen recursively,
since at each step (say ``step $p-k$'') we have something which depends only
on the already chosen $M_{p-1},\ldots,M_{p-k+1}$
and on the new $M_{p-k}$ that is going to be chosen, and not on the constants
$M_{p-k-1},\ldots,M_1$ which will be chosen in the next steps.

\item[{\bf Step 2.}]
We choose $M_{p-1}>0$ such that
\beqsn
\Re\left.(e^{-\Lambda}Ae^\Lambda)\right|_{\ord(p-1)}+\tilde{C}(u)\geq0
\eeqsn
for some positive constant $\tilde{C}(u)>0$ depending on $u$, and apply the
sharp-G{\aa}rding Theorem~\ref{thA1}
to $\left.(e^{-\Lambda}Ae^\Lambda)\right|_{\ord(p-1)}+\tilde{C}(u)$ to get
\beqsn
\sigma(e^{-\Lambda}Ae^\Lambda)=ia_p\xi^p+Q_{p-1}
+\sum_{k=2}^{p-1}\left.(e^{-\Lambda}Ae^\Lambda)\right|_{\ord(p-k)}
+R_{p-1}+A_0,
\eeqsn
where  $Q_{p-1}$ is a positive operator of order $p-1$,
$A_0\in S^0$ and $R_{p-1}(t,x,u,D_x)$ is a remainder of order $p-2$,
consisting in the sum of terms satisfying \eqref{stp3} with a new constant
which is similar to $C_{(M_{p-1},\ldots,M_{p-k})}(u)$ but depends on a higher number
of derivatives of the fixed function $u$.
\item[{\bf Step 3.}]
  To iterate this process, finding positive constants $M_{p-2},\ldots,M_1$ such
  that
\beqsn
\Re\left.(e^{-\Lambda}Ae^\Lambda)\right|_{\ord(p-k)}+\tilde{C}(u)\geq0
\eeqsn
for some $\tilde{C}(u)>0$ depending on $u$, we need
to investigate the action of the sharp-G{\aa}rding Theorem~\ref{thA1}
to each term
of the form
\beqsn
\left.(e^{-\Lambda}Ae^\Lambda)\right|_{\ord(p-k)}+S_{p-k},
\eeqsn
where $S_{p-k}$ denotes terms of order $p-k$ coming from
remainders of
previous applications of
the sharp-G{\aa}rding Theorem, for $p-k\geq3$.

We show at this step that remainders are sums of terms
with ``the right decay at the right level'', in the sense that they satisfy
(\ref{stp3}) with a new constant which is similar to $C_{(M_{p-1},\ldots,M_{p-k})}(u)$
but depends on a higher number of derivatives of the fixed function $u$.
Then we apply the sharp-G{\aa}rding Theorem to terms of order $p-k$,
up to order $p-k=3$.

\item[{\bf Step 4.}]
  In this step we apply the Fefferman-Phong inequality~\eqref{A3}
  to terms of order
$p-k=2$ and the sharp-G{\aa}rding inequality~\eqref{A2} to terms of order
$p-k=1$, finally obtaining that
\beqsn
\sigma(e^{-\Lambda}Ae^\Lambda)=ia_p\xi^p
+\sum_{s=1}^pQ_{p-s}
\eeqsn
with
\beqsn
&&\Re\langle Q_{p-s}w,w\rangle\geq0\qquad\forall w(t,\cdot)\in H^{p-s},\quad
s=1,\ldots,p-3\\
&&\Re\langle Q_{p-s}w,w\rangle\geq-\bar C(u)\|w\|_0^2
\qquad\forall w(t,\cdot)\in H^{p-s},\quad
s=p-2,p-1\\
&&Q_0\in S^0,
\eeqsn
where $\bar C(u)$ is a positive constant which depends on $\gamma(u)$ and
on a finite number of derivatives of the fixed function $u$.
\item[{\bf Step 5.}]
We finally look at the full operator in (\ref{li}) and prove
that $e^{-\Lambda}A^{n,m}e^\Lambda$ satisfies the same estimates (\ref{stp3}) as
$e^{-\Lambda}Ae^\Lambda$, with suitable constants still depending on $\gamma$ and on
a finite number of derivatives of $u$.
Thus, the results of Step 4 hold for the full operator $(e^\Lambda)^{-1}A
e^\Lambda$ and not only for $e^{-\Lambda}Ae^\Lambda$, i.e. there exists a
constant $\bar C(u)>0$, still depending on a (higher) finite number of
derivatives of $u$, such that
\beqsn
\Re\langle (e^{\Lambda})^{-1}Ae^\Lambda w,w\rangle\geq -\bar C(u)\|w\|_0^2\qquad
\forall w(t,\cdot)\in H^{p-1}.
\eeqsn
From this, the thesis follows by the energy method.
\end{itemize}

These steps have already been followed in the proof of  Theorem \ref{thabz}
in \cite{ABZ}. Here we briefly retrace the proof of the five steps, outlining
what is new with respect to \cite{ABZ} and referring to it for all the other
computations.

\smallskip

\noindent{\bf Step 1.}
By developing asymptotically the symbols of the products of
pseudo-differential operators, we have, as in \cite{ABZ}, that:
\beqs
\nonumber
\sigma(e^{-\Lambda}Ae^\Lambda)
=&&\sum_{m=0}^{p-1}\sum_{j=m+1}^p\binom{j}{m}
(ia_j)(e^{-\Lambda}D_x^me^\Lambda)\xi^{j-m}\\
\nonumber
&&+\sum_{m=0}^{p-2}\sum_{j=m+2}^p
\sum_{\alpha=1}^{j-m-1}\sum_{\beta=0}^\alpha\frac{1}{\alpha!}
\binom{j}{m}\binom{\alpha}{\beta}(\partial_\xi^\alpha e^{-\Lambda})
(iD_x^\beta a_j)(D_x^{m+\alpha-\beta}e^\Lambda)\xi^{j-m}+A_0\\
\label{33}
=:&&A_I+A_{II}+A_0,
\eeqs
for some $A_0\in S^0$; the difference is that here $a_j=a_j(t,x,u(t,x))$
depend on
$x$ both in the second and in the third variable, so in $A_{II}$ we have to
make use of Lemma \ref{lemde} to estimate 
$$\Re(iD_x^\beta a_j)=
\Im (D_x^\beta a_j)\ {\rm and}\ |iD_x^\beta a_j|,\quad \beta\neq 0.$$

In \cite{ABZ} we have shown that the terms in $A_{II}$ with
$m+\alpha-\beta\geq1$ satisfy, for some $c>0$,
\beqs
\label{AC1}
\hspace*{7mm}
&&\hspace*{5mm}|(\partial_\xi^\alpha e^{-\Lambda})
(iD_x^\beta a_j)(D_x^{m+\alpha-\beta}
e^\Lambda)\xi^{j-m}|\\
\nonumber\hspace*{7mm}
&&\leq c|D_x^\beta a_j|
\hspace*{-8mm}
\sum_{\afrac{\afrac{\alpha_1+\ldots+\alpha_{p-1}=\alpha}{\gamma_1+
\ldots+\gamma_{p-1}=
m+\alpha-\beta}}{\afrac{r_1+\ldots+r_{q_k}=\alpha_k;\,r_i,\alpha_k
\geq1}{s_1+\ldots+s_{p_{k'}}=\gamma_{k'};\,s_i,\gamma_{k'}\geq1}}}
\prod_{k,k'=1}^{p-1}
M_{p-k}^{q_k}\frac{\langle x\rangle^{\frac{k-1}{p-1}q_k}}{
\langle \xi\rangle_h^{\alpha_k+q_k(k-1)}}\cdot
M_{p-k'}^{p_{k'}}\frac{\langle x\rangle^{\frac{k'-1}{p-1}p_{k'}-\gamma_{k'}}}{
\langle \xi\rangle_h^{p_{k'}(k'-1)}}\,\langle \xi\rangle_h^{j-m}.
\eeqs

In \eqref{AC1} the term 
$-\partial_\xi\Lambda(ia_p\xi^p)D_x\Lambda
=-r_{-1}(x,\xi)(ia_p\xi^p)$ appears. 
This term will cancel, in Step 5, with $r_{-1}(x,\xi)(ia_p\xi^p)$,
coming from $A^{1,0}$ of \eqref{li}; thus we shall omit this term in the 
following.

Each other term of (\ref{AC1}) has order
\beqsn
j-m-\alpha-\sum_{k=1}^{p-1}q_k(k-1)-\sum_{k'=1}^{p-1}p_{k'}(k'-1)\leq \min\{p-k-1,
p-k'-1\}
\eeqsn
and, in view of \eqref{euna}, decay in $x$ of the form
\beqsn
\langle x\rangle^{\frac{\sum_{k=1}^{p-1}q_k(k-1)+\sum_{k'=1}^{p-1}p_{k'}(k'-1)}{p-1}-
m-\alpha+\beta}
\leq\langle x\rangle^{-\frac{j-m-\alpha-
\sum_{k=1}^{p-1}q_k(k-1)-\sum_{k'=1}^{p-1}p_{k'}(k'-1)}{p-1}}
\eeqsn
since $-(p-1)(m+\alpha-\beta)\leq -j+m+\alpha$
for $m+\alpha-\beta\geq1$.
Thus, whenever $M_{p-k}$ or $M_{p-k'}$ appear in (\ref{AC1}),
then the order is at most $p-k-1$ and $p-k'-1$ respectively.

On the other hand, for $m+\alpha-\beta=0$, from \cite{ABZ} we have that
\beqs
\nonumber
&&\hspace*{-15mm}|\Re [(\partial_\xi^\alpha e^{-\Lambda})(iD_x^\beta a_j)
e^\Lambda\xi^{j-m}]|\\
\label{AC2}
\leq&&c |\Re(iD_x^\beta a_j)|
\sum_{\afrac{\alpha_1+\ldots+\alpha_{p-1}}{=\alpha}}\,
\prod_{k=1}^{p-1}
\sum_{\afrac{r_1+\ldots+r_{q_k}=\alpha_k}{r_i,\alpha_k\geq1}}
M_{p-k}^{q_k}\langle x\rangle^{\frac{k-1}{p-1}q_k}
\langle \xi\rangle_h^{-\alpha_k-q_k(k-1)}\langle \xi\rangle_h^{j-m},
\eeqs
for some $c>0$.

Inserting \eqref{edue} in \eqref{AC2}, 
and reminding that $D_x^\beta a_p(t)=0$ for $\beta\neq0$,
we see that each term of
(\ref{AC2}) is a symbol of order 
\beqs
\label{PK1}
j-m-\alpha
-\sum_{k=1}^{p-1}q_k(k-1)\leq p-k-1
\eeqs
with decay in $x$ of the form
\beqsn
\langle x\rangle^{\frac{\sum_{k=1}^{p-1}q_k(k-1)-j+[\beta/2]}{p-1}}
\leq\langle x\rangle^{-\frac{j-m-\alpha-\sum_{k=1}^{p-1}q_k(k-1)}{p-1}}
\eeqsn
since $[\beta/2]\leq\beta\leq\alpha+m$; hence $M_{p-k}$ appears in (\ref{AC2})
only when the order is at most $p-k-1$.
Moreover, the terms of order $p-k$ of $A_{II}$, denoted by
$\left.A_{II}\right|_{\ord(p-k)}$, all have the "right decay for the right
level" in the sense that they satisfy \eqref{stp3}.
To compute the positive
constant $C_{(M_{p-1},\ldots, M_{p-k})}$ we notice that for every $1\leq k\leq p-1$
we have that $\left.A_{II}\right|_{\ord(p-k)}$ contains derivatives of the fixed
function $u$. Let us compute the maximum number of derivatives of $u$ in
$\left.A_{II}\right|_{\ord(p-k)}$: the general term
$(\partial_\xi^\alpha e^{-\Lambda})(iD_x^\beta a_j)(D_x^{m+\alpha-\beta}e^\Lambda)
\xi^{j-m}$, with $\beta\leq\alpha$, is at level $j-m-\alpha$ because of
\eqref{27} and hence at a fixed level $j-m-\alpha=p-k$ the maximum
number of $\beta$-derivatives on $u$ appears when $\alpha\geq1$ is maximum
and hence $m=0$ and $j$ maximum, i.e. $j=p-1$ ($j=p$ is not considered because
$D_x^\beta a_p(t)=0$); in this case $p-k=j-m-\alpha=p-1-\alpha$ and the
maximum number of $\beta$-derivatives on $u$ at level $p-k$ is given by $\beta=\alpha=k-1$.
On the other hand, the minimum number of derivatives on $u$ at level $p-k$
is zero (think, for instance, at terms of the form $D_x^0a_j$) and there are
also terms which do not depend neither on $\gamma$ neither on $u$ (think at
$D_x^0a_p(t)$).
Summing up, formulas \eqref{euna}, \eqref{edue}, (\ref{AC1}),  and (\ref{AC2})
give that \beqs
\label{34}
\left|\Re \left.A_{II}\right|_{\ord(p-k)}\right|\leq
\frac{C_k(1+\gamma(u))(1+\|u\|_{1+(k-1)}^{k-1})}{\langle x\rangle^{\frac{p-k}{p-1}}}
\langle \xi\rangle_h^{p-k}
\eeqs
for some $C_k>0$, and moreover, $\Re\left.A_{II}\right|_{\ord(p-k)}$ depends only
on $M_{p-1},\ldots,M_{p-k+1}$ and not on $M_{p-k},\ldots,M_1$.

As it concerns $A_{I}$, following \cite{ABZ} it is straightforward, by means
of Lemma \ref{lemma1}, to show that it can be written as
\beqs
\label{36}
A_{I}=ia_p\xi^p+\sum_{k=1}^{p-1}(A^0_{p-k}+A^1_{p-k})+\tilde{B}_0,
\eeqs
where $$A_{p-k}^0:=ia_{p-k}\xi^{p-k} + ipa_pD_x\lambda_{p-k}\xi^{p-1}\in S^{p-k}$$
is the sum of a term $ia_{p-k}(t,x,u)\xi^{p-k}$ depending on $u$ and a term
$ipa_p(t)D_x\lambda_{p-k}(x,\xi)\xi^{p-1}$  not depending on $u$, while
$A_{p-k}^1\in S^{p-k}$ depends on the coefficients $a_j(t,x,u)$ (but not on their
derivatives) and on the symbols $\lambda_{p-1},\lambda_{p-2},\ldots,
\lambda_{p-1-(k-2)}$ (cf. \cite{ABZ}), $\tilde{B}_0\in S^0$, and from
\eqref{im}, \eqref{re} with $\beta=0$ and \eqref{a2}, \eqref{a1} we get that
\beqs
\label{38}
|\Re A^0_{p-k}(t,x,u)|+
|A^1_{p-k}(t,x,u)|\leq
\frac{C_k (1+\gamma(u))\langle \xi\rangle_h^{p-k}}{\langle x\rangle^{\frac{p-k}{p-1}}}
\eeqs
possibly enlarging the constant $C_k$ of \eqref{34}, $1\leq k\leq p-1$;
moreover, $A^0_{p-k}$ depends only on $M_{p-k}$ and $A^1_{p-k}$ depends only on
$M_{p-1},\ldots,M_{p-k+1}$.
Step 1 is so completed.

\smallskip

\noindent{\bf Step 2.}
We now look at the real part of the terms $A_{p-k}$ of order $p-k$ in \eqref{33}:
\beqs
\nonumber
A_{p-k}:=&&
\left.A_{I}\right|_{\ord(p-k)}+\left.A_{II}\right|_{\ord(p-k)}\\
\label{227bis}
=&&A^0_{p-k}+A^1_{p-k}+\left.A_{II}\right|_{\ord(p-k)},
\qquad k=1,\ldots,p-1.
\eeqs

From the definition \eqref{26} of $\lambda_{p-k}$, conditions \eqref{ap} and
(\ref{im}) with $\beta=0$, estimates (\ref{34}) and (\ref{38}), for
$|\xi|\geq2h$ we have:
\beqs
\nonumber
\Re A_{p-k}=&&\Re(ipa_pD_x\lambda_{p-k}\xi^{p-1}+ia_{p-k}\xi^{p-k})+
\Re(A_{p-k}^1)+\Re(\left.A_{II}\right|_{\ord(p-k)})
\\
\nonumber
&&=pa_p\xi^{p-1}M_{p-k}\frac{|\xi|^{p-1}}{\xi^{p-1}}
\langle x\rangle^{-\frac{p-k}{p-1}}
\psi\left(\frac{\langle x\rangle}{\langle \xi\rangle_h^{p-1}}\right)
\langle \xi\rangle_h^{-k+1}-
\Im a_{p-k}\cdot\xi^{p-k}
\\
\nonumber
&&+\Re(A_{p-k}^1)+\Re(\left.A_{II}\right|_{\ord(p-k)})
\\
\nonumber
\geq&&\left(\frac{2}{\sqrt{5}}\right)^{p-1}
pC_p\frac{M_{p-k}}{\langle x\rangle^{\frac{p-k}{p-1}}}
\langle \xi\rangle_h^{p-k}\psi-
\frac{C\gamma(u)}{\langle x\rangle^{\frac{p-k}{p-1}}}\langle \xi\rangle_h^{p-k}\psi-
\frac{C\gamma(u)}{\langle x\rangle^{\frac{p-k}{p-1}}}\langle \xi\rangle_h^{p-k}
(1-\psi)
\\
\nonumber
&&-C_k(1+1+\|u\|_{1+(k-1)}^{k-1})(1+\gamma(u))
\frac{\langle\xi\rangle_h^{p-k}}{\langle
x\rangle^{\frac{p-k}{p-1}}}
\\
\label{39}
\geq&&\psi\left(\frac{\langle x\rangle}{\langle \xi\rangle_h^{p-1}}\right)
\cdot\left[\left(\frac{2}{\sqrt{5}}\right)^{p-1}pC_pM_{p-k}-
  \left(C+C_k(2+\|u\|_{1+(k-1)}^{k-1})\right)(1+\gamma(u))\right]
\frac{\langle \xi\rangle_h^{p-k}}{\langle x\rangle^{\frac{p-k}{p-1}}}
\\
&&
\nonumber
-\tilde C_k (1+\gamma(u))(1+\|u\|_{1+(k-1)}^{k-1})
\eeqs
for some $\tilde C_k>0$ since
$|\xi|
\geq\frac{2}{\sqrt{5}}\langle\xi\rangle_h$
and $\langle \xi\rangle_h^{p-1}/
\langle x\rangle$ is bounded on the support of $(1-\psi)$.
Notice that the constants $C_k,\tilde C_k$ depend only on
$M_{p-1},\ldots,M_{p-k+1}$
and not on $M_{p-k}$, and that with a new constant $C_k'>0$ we can write
\beqs\label{equesta}\quad
\Re A_{p-k}&\geq& \psi\left(\frac{\langle x\rangle}{\langle \xi\rangle_h^{p-1}}
\right)\hskip-0.2cm\left[\left(\frac{2}{\sqrt{5}}\right)^{p-1}pC_pM_{p-k}-
  C_k'(1+\|u\|_{1+(k-1)}^{k-1})(1+\gamma(u))\right]
\frac{\langle \xi\rangle_h^{p-k}}{\langle x\rangle^{\frac{p-k}{p-1}}}
\\
\nonumber &&-\tilde C_k (1+\gamma(u))(1+\|u\|_{1+(k-1)}^{k-1}).
\eeqs

In particular, for $k=1$,
\beqsn
\Re A_{p-1}
\geq \psi\left(\frac{\langle x\rangle}{\langle \xi\rangle_h^{p-1}}\right)
\cdot\left[\left(\frac{2}{\sqrt{5}}\right)^{p-1}pC_pM_{p-1}-2C_1'(1+\gamma(u))
  \right]
\frac{\langle \xi\rangle_h^{p-1}}{\langle x\rangle}-2\tilde C_1 (1+\gamma(u)),
\eeqsn
and we can choose $M_{p-1}>0$ sufficiently
large, i.e.
\[M_{p-1}=M_{p-1}(u)\geq \ds\frac{C_1'(1+\gamma(u))}{2^{p-2}/\sqrt{5}^{p-1}pC_p}\]
so that
\beqsn
\Re A_{p-1}(t,x,u,\xi)\geq -2\tilde C_1 (1+\gamma(u))
\qquad\forall(t,x,\xi)\in[0,T]\times\R^2.
\eeqsn
Applying the sharp-G{\aa}rding Theorem~\ref{thA1} to
$A_{p-1}+2\tilde C_1 (1+\gamma(u))$ we can thus find
pseudo-differential operators $Q_{p-1}(t,x,u,D_x)$ and $R_{p-1}(t,x,u,D_x)$
with symbols $Q_{p-1}(t,x,u,\xi)\in S^{p-1}$ and $R_{p-1}(t,x,u,\xi)\in S^{p-2}$
such that
\beqs
\label{nsg1}
&&A_{p-1}(t,x,u,D_x)=Q_{p-1}(t,x,u,D_x)+R_{p-1}(t,x,u,D_x)-2\tilde C_1 (1+\gamma(u))
\eeqs
where
$$\Re\langle Q_{p-1}(t,x,u,D_x)w(t,x),w(t,x)\rangle\geq0
\qquad\forall (t,x)\in[0,T]\times\R,\ \forall w(t,\cdot)\in H^{p-1}(\R),$$
and $R_{p-1}$ has the asymptotic development given in \eqref{A1} for $A_{p-1}$.
From (\ref{33}), (\ref{36}), \eqref{227bis} and (\ref{nsg1}) we get:
\beqs
\nonumber
\sigma(e^{-\Lambda} Ae^\Lambda)
=&&ia_p\xi^p+A_{p-1}
+\sum_{k=2}^{p-1}A_{p-k}+A'_0\\
\label{47}
=&&ia_p\xi^p+Q_{p-1}
+\sum_{k=2}^{p-1}(\left.A_{I}\right|_{\ord(p-k)}+\left.A_{II}\right|_{\ord(p-k)}+
\left.R_{p-1}\right|_{\ord(p-k)})+A''_0
\eeqs
for some $A'_0,A''_0\in S^0$,
where $\left.R_{p-1}\right|_{\ord(p-k)}$ denotes the terms of order $p-k$ of
$R_{p-1}$, for $2\leq k\leq p$. To complete step 2, we need to check that the terms
$\left.R_{p-1}\right|_{\ord(p-k)}$ satisfy \eqref{stp3} with a new constant
$C_{(M_{p-1},\ldots, M_{p-k})}(u)$.
In \cite{ABZ} it has already been proved, using Lemma \ref{lemma1} and
conditions \eqref{3}, \eqref{4} instead of \eqref{im}, \eqref{re},
\eqref{dew}, that the terms of $\left.R_{p-1}\right|_{\ord(p-k)}$ all have the
right decay for the right level $p-k$; here we only need to
find by means of Lemma \ref{lemde} the precise constant $C_{M_{p-1}}(u)$
(note that the constant depends only on $M_{p-1}$ since $R_{p-1}$ depends only on $A_{p-1}$). The term $A_{p-1}$ does not contain derivatives of $u$,
since in order that a term of type
$(\partial_\xi^\alpha e^{-\Lambda})(iD_x^\beta a_j)(D_x^{m+\alpha-\beta}e^\Lambda)
\xi^{j-m}$ of \eqref{33} is at level $j-m-\alpha=p-1$ with $\alpha\geq1$ we must have
$j=p$. Therefore, looking at the asymptotic development of $R_{p-1}$ given by Remark \ref{remA} with $m=p-1$ and $\ell=k-1$, the maximum number of derivatives on $u$ in
$\left.R_{p-1}\right|_{\ord(p-k)}$ appears in \eqref{Asc}  when $\alpha=0$ and $\beta=2\ell+1=2k-1$.

By Lemma \ref{lemde} we
come so to the estimate
\beqs\label{tu}
|R_{p-1}(t,x,u,\xi)\vert_{\ord(p-k)}|\leq \frac{C_k(1+\gamma(u))
  (1+\|u\|_{1+(2k-1)}^{2k-1})}{\langle x\rangle^{\frac{p-k}{p-1}}}
\langle\xi\rangle^{p-k}_h,
\eeqs
possibly enlarging the constant $C_k$ of  \eqref{34}, \eqref{38}.
Step 2 is completed.

\smallskip

\noindent{\bf Step 3.} In order to reapply sharp-G{\aa}rding Theorem~\ref{thA1}
we now have to investigate the action of that theorem
to each term of the form
$\left.A_{I}\right|_{\ord(p-k)}+\left.A_{II}\right|_{\ord(p-k)}+S_{p-k}$, where
$S_{p-k}$ denotes terms of order $p-k$ coming from remainders of
previous applications of the sharp-G{\aa}rding Theorem, for $p-k\geq3$.
In \cite{ABZ} we have computed and estimated the generic remainder
$$R(\left.A_{I}\right|_{\ord(p-k)})+R(\left.A_{II}\right|_{\ord(p-k)})+R(S_{p-k})$$
under the assumptions of Theorem \ref{thabz}, showing that it is sum of terms
of order $p-j$, $k+1\leq j\leq p$, each one of them with the right decay
$(p-j)/(p-1)$ and the right constants $M_{p-1},\ldots, M_{p-k+1}$ for the right level $p-j$. Here, we can argue with the
same (quite long and technical) computations and make use of Lemma
\ref{lemde} instead of assumptions \eqref{3} and \eqref{4} to get that this
generic remainder consists in a sum of terms of order
$p-j$, $k+1\leq j\leq p$, each one of them satisfying \eqref{stp3}.
It only remains to compute precisely the corresponding constant
$C_{(M_{p-1}, \ldots, M_{p-k+1})}(u)$.

To this aim, we need to better understand the dependence of $S_{p-k}$ on $u$;
let us first focus on the second application of the sharp-G{\aa}rding
Theorem~\ref{thA1}.
From (\ref{47}) with $R_{p-1}=R(A_{p-1})$ we have
\beqsn
\sigma(e^{-\Lambda}Ae^\Lambda)=ia_p\xi^{p}+Q_{p-1}+A_{p-2}
+R(A_{p-1})\big|_{\ord(p-2)}
+\sum_{k=3}^{p-1}(A_{p-k}+R(A_{p-1})\big|_{\ord(p-k)})
+A''_0.
\eeqsn

Since from \eqref{tu} $R(A_{p-1})\big|_{\ord(p-2)}$ has the same structure as
$A_{p-2}$, depends on the same constant, and bears $2k-1=3$ derivatives of
$u$ (much more than $A_{p-2}$, see \eqref{34} and \eqref{38}), we can follow
the computations in \eqref{39} to get, instead of \eqref{equesta},
\beqsn
&&\hskip-2cm\Re\left(A_{p-2}+\left.R(A_{p-1})\right|_{\ord(p-2)}
\right)(t,x,u,\xi)\geq
\\
&&\hskip+1cm\geq \psi\hskip-0.1cm\left(\frac{\langle x\rangle}{\langle
  \xi\rangle_h^{p-1}}\right)\hskip-0.15cm\left[
  \left(\frac{2}{\sqrt{5}}\right)^{p-1}\hskip-0.6cmpC_pM_{p-2}-
  \hskip-0.1cmC_2'\left(1+\|u\|_{4}^{3}\right)(1+\gamma(u))\right]\hskip-0.1cm
\frac{\langle \xi\rangle_h^{p-2}}{\langle x\rangle^{\frac{p-2}{p-1}}}
\\
&&\hskip+1,4cm-\tilde C_2(1+\gamma(u))(1+\|u\|_{4}^{3})
\eeqsn
for some $C'_2,\tilde{C}_2>0$.

We can so choose $M_{p-2}>0$ sufficiently
large, i.e.
\[M_{p-2}\geq \ds\frac{C_2'(1+\|u\|_{4}^{3})(1+\gamma(u))}{2^{p-1}/
  \sqrt{5}^{p-1}pC_p}\]
so that
\beqsn
\Re\left(A_{p-2}+\left.R(A_{p-1})\right|_{\ord(p-2)}
\right)(t,x,u,\xi)\geq -\tilde C_2(1+\gamma(u))(1+\|u\|_{4}^{3}),
\quad\forall(t,x,\xi)\in[0,T]\times\R^2
\eeqsn
and we can apply the sharp-G{\aa}rding Theorem~\ref{thA1} to
$$A_{p-2}+\left.R(A_{p-1})\right|_{\ord(p-2)}+\tilde C_2(1+\gamma(u))
(1+\|u\|_{4}^{3})$$ obtaining that there exist
pseudo-differential operators $Q_{p-2}$ and $R_{p-2}$, with symbols in
$S^{p-2}$ and $S^{p-3}$ respectively, such that
\beqsn
&&\Re\langle Q_{p-2}w,w\rangle\geq0\qquad\forall w(t,\cdot)\in H^{p-2}\\
&&A_{p-2}+R(A_{p-1})\big|_{\ord(p-2)}=Q_{p-2}
+R_{p-2}-\tilde C_2(1+\gamma(u))(1+\|u\|_{4}^{3}),
\eeqsn
with
\beqsn
R_{p-2}=R(A_{p-2}+
R(A_{p-1})\big|_{\ord(p-2)})
=R(A_{p-2})+R(R(A_{p-1})\big|_{\ord(p-2)}),
\eeqsn
so that with the notation $R^2(A_{p-1})=R(R(A_{p-1}))$
\beqsn
\sigma(e^{-\Lambda}Ae^\Lambda)=&&ia_p\xi^p+Q_{p-1}+Q_{p-2}\\
&&+A_{p-3}+R(A_{p-1})\big|_{\ord(p-3)}
+R(A_{p-2})\big|_{\ord(p-3)}+R^2(A_{p-1})\big|_{\ord(p-3)}\\
&&+\sum_{k=4}^{p-1}\left(A_{p-k}+R(A_{p-1})\big|_{\ord(p-k)}
+R(A_{p-2})_{\ord(p-k)}+R^2(A_{p-1})\big|_{\ord(p-k)}\right)+A'_0.
\eeqsn
At the second application of sharp-G{\aa}rding Theorem~\ref{thA1} the term
$R^2(A_{p-1})$ appears at (maximum) level $p-3$.
By \eqref{tu} we know that $R(A_{p-1})\big|_{\ord(p-2)}$ contains
$2k-1=3$ derivatives on $u$; so, its remainder, given by  \eqref{A1}, has the structure of Remark \ref{remA} and by \eqref{Asc} with $m=p-2$ and $\ell=k-2$ we see that the maximum number of derivatives with respect to $u$ in $R(R(A_{p-1})\big|_{\ord(p-2)})\big|_{\ord(p-k)}$ appears when $\beta=2\ell+1=2k-3$ and is given by $3+(2k-3)=2k$.

Analogously, by formula
\eqref{A1} and Remark \ref{remA}, we have that $R(A_{p-2})$ consists of terms
$R(A_{p-2})\big|_{\ord(p-k)}$ with $3\leq k\leq p$; formula \eqref{Asc} with
$m=p-2$ and $\ell=k-2$, together with \eqref{34} and \eqref{38}, give that
$R(A_{p-2})\big|_{\ord(p-k)}$ contains at most $1+(2k-3)=2k-2$ derivatives on $u$.

Summing up, the maximum number of derivatives of $u$ appears in $R^2(A_{p-1})$
and we get:
\beqs\nonumber
\left|\left(R(A_{p-1})\big|_{\ord(p-k)}
+R(A_{p-2})_{\ord(p-k)}\right.\right.&&\left.\left.+R^2(A_{p-1})\big|_{\ord(p-k)}\right)(t,x,u,\xi)\right|\leq
\\
\label{tuu}
&&\leq \frac{C_k(1+ \gamma(u))(1+\|u\|_{1+2k}^{2k})}{\langle x
  \rangle^{\frac{p-k}{p-1}}}\langle\xi\rangle_h^{p-k},\ 3\leq k\leq p.
\eeqs
Now, let us come back to the general case.

At the $j-th$ application of sharp-G{\aa}rding Theorem we find, at level
$p-j$, the terms
\beqsn
A_{p-j}+\ds\sum_{\afrac{1\leq k\leq j-1}{1\leq s\leq j-k}}R^s (A_{p-k})\vert_{p-j}.
\eeqsn
These terms depend on $u$ and its derivatives; reminding that
$$A_{p-j}=A_{I}\vert_{\ord(p-j)}+A_{II}\vert_{\ord(p-j)}=ia_{p-j}\xi^{p-j}+ipa_p
D_x\lambda_{p-j}\xi^{p-1}+A^1_{p-k}+A_{II}\vert_{\ord(p-j)},$$ see also \eqref{34}
and \eqref{38}, we see that the maximum number of derivatives of $u$ is in
the term $R^{j-1} (A_{p-1})\vert_{\ord(p-j)}, $ i.e. in the principal part of $R^{j-1} (A_{p-1})$.
To compute this number, we work by induction. For $j=2$, by \eqref{tu} with $k=2$ we know that $R^{j-1} (A_{p-1})\vert_{\ord(p-j)}$ contains $2k-1=3$ derivatives of $u$; for $j=3$, by \eqref{tuu} with $k=3$ we know that $R^{j-1} (A_{p-1})\vert_{\ord(p-j)}$ contains $2k=6$ derivatives of $u$; let us now suppose that for all $2\leq s\leq j-1$ we have that $R^{s-1} (A_{p-1})\vert_{\ord(p-s)}$ contains $3(s-1)$ derivatives of $u$, and prove that $R^{j-1} (A_{p-1})\vert_{\ord(p-j)}$ contains $3(j-1)$ derivatives of $u$. Arguing as for $R(A_{p-1})$ and $R^2(A_{p-1})$ we obtain that the remainder $R^{j-1} (A_{p-1})=R(R^{j-2}(A_{p-1}))$ is the sharp-Garding remainder of the operator $R^{j-2}(A_{p-1})$ with symbol of order $p-1-(j-2)=p-j+1$; the principal part of $R(R^{j-2}(A_{p-1}))$ consists so in a term of order $p-j$, depending on $u$ and its derivatives, and given by \eqref{Asc} with $m=p-j+1$ and $\ell=1$. The maximum number of derivatives on $u$ appears so when $\alpha=0$ and $\beta=3$ in \eqref{Asc} and is given, by the inductive hypothesis, by $3(j-2)+3=3(j-1)$.

It follows that
\beqsn
|R^{j-1} (A_{p-1})\vert_{\ord(p-j)}(t,x,u,\xi)|\leq \frac{C_j(1+ \gamma(u))
  (1+\|u\|_{1+3(j-1)}^{3(j-1)})}{\langle x\rangle^{\frac{p-j}{p-1}}}\langle\xi
\rangle_h^{p-j},\ 2\leq j\leq p.
\eeqsn
Thus, at each level $p-j$, $2\leq j\leq p$, we have
\beqsn
\left\vert A^1_{p-j }+A_{II}\vert_{\ord(p-j)}+\ds\sum_{\afrac{1\leq k\leq j-1}{1\leq s\leq j-k}}R^s (A_{p-k})\vert_{p-j}\right\vert(t,x,u,\xi)
\leq\frac{C_j(1+ \gamma(u))(1+\|u\|_{1+3(j-1)}^{3(j-1)})}{\langle x
  \rangle^{\frac{p-j}{p-1}}}\langle\xi\rangle_h^{p-j},
\eeqsn
with $C_j$ depending on $M_{p-1},\ldots, M_{p-j+1}$ and not on
$M_{p-j},\ldots, M_1.$

Thanks to the estimates given here above, we can apply again and again the
sharp-G{\aa}rding Theorem~\ref{thA1} to find pseudo-differential
operators $Q_{p-1},Q_{p-2},\ldots,Q_3$ of order $p-1,p-2,\ldots,3$
respectively and all positive definite, such that
\beqsn
\sigma(e^{-\Lambda}Ae^\Lambda)=ia_p\xi^p
+Q_{p-1}+Q_{p-2}+\ldots +Q_3
+(A_{2}+S_{2})+(A_{1}+S_{1})+(A_0+S_0),
\eeqsn
with $S_j,$ $j=0,1,2$ coming from remainders of the sharp-G{\aa}rding theorem,
and such that
\beqs
\label{Aj}\quad
|(A_j+S_j)(t,x,u,\xi)|\leq C_{
  p-j}\cdot (1+\gamma(u))(1+\|u\|_{1+3(p-j-1)}^{3(p-j-1)})\langle x
\rangle^{-\frac{j}{p-1}}\langle\xi\rangle_h^{j},\; j=0,1,2,
\eeqs
with $C_{p-j}>0$ depending on $M_{p-1},\ldots,M_{j+1}$ and not on $M_j,\ldots,M_1.$

\smallskip

\noindent{\bf Step 4.}
Let us split the term of order 2 into $\Re(A_2+S_2)+i\Im(A_2+S_2)$; by
\eqref{equesta} and the discussion of step 3 we have that
\beqsn
\Re(A_2+S_2)&\geq&
\psi\left(\frac{\langle x\rangle}{\langle \xi\rangle_h^{p-1}}\right)
\cdot
\left[
  \left(\frac{2}{\sqrt{5}}\right)^{p-1}\hskip-0.5cmpC_pM_{2}-C_{p-2}''
  (1+\|u\|_{1+3(p-3)}^{3(p-3)})(1+\gamma(u))
\right]
\langle x\rangle^{-\frac{2}{p-1}}\langle \xi\rangle_h^2
\\
&&-\bar C_{p-2}(1+\|u\|_{1+3(p-3)}^{3(p-3)})(1+\gamma(u)),
\eeqsn
for some $C''_{p-2},\bar{C}_{p-2}>0$
and we can choose $M_2$ large enough so that
\beqsn
\Re(A_2+S_2)\geq -\bar C_{p-2}(1+\gamma(u))(1+\|u\|^{3(p-3)}_{1+3(p-3)}).
\eeqsn
Then, the Fefferman-Phong inequality~\eqref{A3}
applied to
\beqsn
\tilde{A}_2(t,x,u(t,x),\gamma(u(t,x)),D_x):=\Re(A_2+S_2)+\bar C_{p-2}
(1+\gamma(u))(1+\|u\|^{3(p-3)}_{1+3(p-3)})
\eeqsn
gives
\beqsn
\Re\langle (\Re(A_2+S_2))w,w\rangle
\geq-c(u)(1+\gamma(u))(1+\|u\|^{3(p-3)}_{1+3(p-3)})\|w\|_0^2
\eeqsn
without any remainder, for a new constant $c(u)>0$
depending on the derivatives
$\partial_\xi^\alpha \partial_x^\beta$ of the symbol of  $\tilde{A}_2$
with $|\alpha|+|\beta|\leq 7$,
by Remark \ref{remA4}. Being the function $\gamma$ of class $C^7$ by assumption
we can then find a constant $C_\gamma>0$, depending only on $\gamma$, such that
\beqs
\nonumber
\Re\langle (\Re(A_2+S_2))w,w\rangle
\geq&&-C_\gamma(1+\|u\|^{1+3(p-3)+7}_{1+3(p-3)+7})\|w\|_0^2\\
\label{tt2}
=&&-C_\gamma(1+\|u\|^{3p-1}_{3p-1})\|w\|_0^2.
\eeqs

On the other hand, we split $i\Im(A_2+S_2)$ into its hermitian and
antihermitian part:
\beqsn
\frac{i\Im(A_2+S_2)+(i\Im(A_2+S_2))^*}{2}+\frac{i\Im(A_2+S_2)-
  (i\Im(A_2+S_2))^*}{2},
\eeqsn
and we have that $\Re\langle\frac{i\Im(A_2+S_2)-(i\Im(A_2+S_2))^*}{2}w,
w\rangle=0,$ while $i\Im(A_2+S_2)+(i\Im(A_2+S_2))^*$ can be put together with
$A_1+S_1$ since it has a real principal part of order 1, does not depend on
$M_1$, and has the ``right
decay'' for level 1.
Therefore we can choose $M_1>0$ sufficiently large so that, by \eqref{Aj},
\beqsn
\Re\left(A_1+S_1+\frac{i\Im(A_2+S_2)+(i\Im(A_2+S_2))^*}{2}\right)\geq -
\bar C_{p-1}(1+\gamma(u))(1+\|u\|^{3(p-2)}_{1+3(p-2)}),
\eeqsn
for some $\bar{C}_{p-1}>0$
and hence, by the
sharp-G{\aa}rding inequality~\eqref{A2} we get
\beqs
\nonumber
\Re\langle \big(A_1+S_1+&&
\frac{i\Im(A_2+S_2)+(i\Im(A_2+S_2))^*}{2}\big)
w,w\rangle\geq
\\
\label{t2}
&&\geq  -C'_\gamma(1+\|u\|^{1+3(p-2)}_{1+3(p-2)})
\|w\|_0^2,
\eeqs
for a new constant $C'_\gamma>0$.

Summing up, throughout steps 1-4 we have obtained
\beqsn
\sigma(e^{-\Lambda}Ae^\Lambda)=ia_p\xi^p+
\sum_{j=1}^{p-3}Q_{p-j}
+(A_2+S_2)+(A_1+S_1)+(A_0+S_0)
\eeqsn
with
\beqsn
&&\Re\langle Q_{p-j}w,w\rangle\geq0\qquad\forall w(t,\cdot)\in H^{p-j},\
1\leq j\leq p-3\\
&&\Re\langle(A_2+S_2+A_1+S_1)w,w\rangle\geq -\tilde{C}_\gamma
(1+\|u\|^{3p-1}_{3p-1})\|w\|_0^2
\qquad\forall w(t,\cdot)\in H^2,
\eeqsn
for a positive constant $\tilde{C}_\gamma$, because of \eqref{tt2} and
\eqref{t2}, since $3p-1>1+3(p-2)$.

\smallskip

\noindent{\bf Step 5.} Now, we come back to (\ref{li}), and
remark that $A^{n,m}$ is of the same
kind of $A$ with $D_x^m a_j$ instead of $a_j$ and
$(\partial_\xi^mr^n)\xi^j$ instead of $\xi^j$, with $0\leq m\leq p-2$ and
$1\leq n\leq p-1-m$.
This implies that we have $m$ more $x$-derivatives on $a_j$, but
the level in $\xi$ decreases of $-n-m<-m$, so we still have the right
decay for the right level and the right dependence on the constants
$M_{p-j}$, $j\leq k$, at each level $p-k$. As far as the derivatives of the
fixed function $u$ are concerned, the maximum number of derivatives of $u$ appears in the
term $\partial_\xi^m r^nD_x^m(ia_j(t,x,u)\xi^j)\in S^{j-m-n}$ with $j=p-1$,
$n=1$, $m=p-2$, so that we argue as for
$\sigma(e^{-\Lambda}Ae^\Lambda)$ and find that also
\beqsn
\sigma(e^{-\Lambda}A^{n,m}e^\Lambda)=\sum_{s=0}^pQ_{p-s}^{n,m}
\eeqsn
with $Q_0^{n,m}\in S^0$ and
\beqsn
\Re\langle Q_{p-s}^{n,m}w,w\rangle\geq&&-C^{n,m}_\gamma
(1+\|u\|^{3p-1+p-2}_{3p-1+p-2})\|w\|_0^2\\
=&&-C^{n,m}_\gamma
(1+\|u\|^{4p-3}_{4p-3})\|w\|_0^2
\qquad\forall w(t,\cdot)\in H^{p-s},\ 1\leq s\leq p-1
\eeqsn
for some $C^{n,m}_\gamma>0$.

Summing up, we have proved that
\beqs\label{sumup}
\Re\langle (e^\Lambda)^{-1}Ae^\Lambda w,w\rangle\geq-c_\gamma
(1+\|u\|^{4p-3}_{4p-3}) \|w\|_0^2\qquad
\forall w(t,\cdot)\in H^{p}
\eeqs
for some $c>0$.
From \eqref{ipilambda} and \eqref{sumup} it follows that every
$w\in C([0,T]; H^{p})\cap C^1([0,T]; H^{0})$ satisfies:
\beqsn
\frac{d}{dt}\|w\|_0^2=&&2\Re\langle\partial_tw,w\rangle=2\Re\langle
iP_\Lambda w,w\rangle-2\Re\langle (e^\Lambda)^{-1}Ae^\Lambda w,w\rangle
\\
\leq&&\|P_\Lambda w\|_0^2+\|w\|_0^2-2
\Re\langle (e^\Lambda)^{-1}Ae^\Lambda w,w\rangle\\
\leq&&c'_\gamma(1+\|u\|^{4p-3}_{4p-3})(\|P_\Lambda w\|_0^2+\|w\|_0^2)
\eeqsn
for some $c'_\gamma>0$.
Applying Gronwall's inequality, we deduce that for all $w\in C([0,T]; H^{p})\cap C^1([0,T]; H^{0})$, the estimate
\beqsn
\hskip+0.5cm\|w(t,\cdot)\|_0^2\leq e^{c'_{\gamma}(1+\|u\|^{4p-3}_{4p-3})t}
\left(\|w(0,\cdot)\|_0^2+
\int_0^t\|P_\Lambda w(\tau,\cdot)\|_0^2d\tau\right),\;\forall t\in[0,T]
\eeqsn
holds.
Since $\langle D_x\rangle^sP_\Lambda \langle D_x\rangle^{-s}$ satisfies, for every $s\in\R$, the same hypotheses as $P_\Lambda$, we immediately get that for every $s\in\R$, $w\in C([0,T]; H^{s+p})\cap C^1([0,T]; H^{s})$ we have
\beqs
\label{E3'}
\hskip+0.5cm\|w(t,\cdot)\|_s^2\leq e^{C_{s,\gamma}(1+\|u\|^{4p-3}_{4p-3})t}
\left(\|w(0,\cdot)\|_s^2+
\int_0^t\|P_\Lambda w(\tau,\cdot)\|_s^2d\tau\right),\;\forall t\in[0,T]
\eeqs
for a positive constant $C_{s,\gamma}.$

The a priori estimate \eqref{E3'} implies, by standard arguments from the energy method, that for every $u_{0,\Lambda}\in H^s$ and $f_\Lambda\in
C([0,T], H^s)$ the Cauchy problem (\ref{6}) has a unique solution
$w\in C([0,T]; H^s)$ satisfying
\beqs
\label{E3}
\|w(t,\cdot)\|_s^2\leq e^{C_{s,\gamma}(1+\|u\|^{4p-3}_{4p-3})t}\left(\|u_{0,\Lambda}\|_s^2+
\int_0^t\|f_\Lambda(\tau,\cdot)\|_s^2d\tau\right)
\qquad\forall t\in[0,T].
\eeqs
Since $e^\Lambda\in S^\delta$, for $v=e^\Lambda w$ we finally have,
from (\ref{E3}) with $s-\delta$ instead of $s$:
\beqsn
\|v\|_{s-2\delta}^2
\leq&&c_1\|w\|_{s-\delta}^2\leq c_2e^{C_{s,\gamma}(1+\|u\|^{4p-3}_{4p-3})t}
\left(\|u_{0,\Lambda}\|_{s-\delta}^2+
\int_0^t\|f_\Lambda\|_{s-\delta}^2d\tau\right)\\
\leq&&C'_{s,\gamma}e^{(1+\|u\|^{4p-3}_{4p-3})t}
\left(\|u_0\|_s^2+\int_0^t\|f\|_s^2d\tau\right)
\eeqsn
for some $c_1,c_2,C'_{s,\gamma}>0$, that is \eqref{Eoldv}.
This proves Theorem \ref{iop}. In particular, we have that for every $f\in C([0,T]; H^\infty)$ and $u_0\in H^\infty$ there
exists a solution $v\in C([0,T];H^\infty(\R))$ of (\ref{2lin}) which
satisfies an energy estimate of the form (\ref{Eold}) with constant
$$C_s=C_s(u)=C'_{s,\gamma}e^{(1+\|u\|^{4p-3}_{4p-3})t}.$$
\end{proof}

\section{The semilinear problem}
\label{sec4}

In this section we consider the semilinear Cauchy problem \eqref{2} and give the proof of the main result of this paper, Theorem \ref{th1}.

We set $X:=C^1([0,T];H^\infty(\R))$ and consider the map
\beqsn
T:\ X&&\longrightarrow X\\
u&&\longmapsto T(u)
\eeqsn
defined by
\beqs\label{heart}
T(u):=&&u(t,x)-u_0(x)+i\int_0^ta_p(s)D_x^pu(s,x)ds\\
\nonumber
&&+i\sum_{j=0}^{p-1}\int_0^ta_j(s,x,u(s,x))D_x^ju(s,x)ds-i\int_0^tf(s,x)ds.
\eeqs

\begin{Rem}\label{remE}
The existence of a local solution
$u\in C^1([0,T^*];H^\infty(\R))$ of the Cauchy problem \eqref{2} is
equivalent to the existence of a solution $u\in C^1([0,T^*];H^\infty(\R))$ of
\beqs
\label{4bis}
T(u)\equiv0\quad\mbox{in}\ [0,T^*]\times\R.
\eeqs
Indeed, if $T(u)=0$ then
\beqs
\nonumber
u(t,x)=&&u_0(x)-i\int_0^ta_p(s)D_x^pu(s,x)ds\\
\label{2bis}
&&-i\sum_{j=0}^{p-1}\int_0^ta_j(s,x,u(s,x))D_x^ju(s,x)ds+i\int_0^tf(s,x)ds
\eeqs
and hence $u(0,x)=u_0(x)$ and
\beqs
\label{3bis}
D_tu(t,x)=&&-i\partial_tu(t,x)=-a_p(t)D_x^pu(t,x)-\sum_{j=0}^{p-1}a_j(t,x,u(t,x))
D_x^ju(t,x)+f(t,x)
\eeqs
so that $u$ solves \eqref{2}.
Vice versa, if $u\in C^1([0,T^*];H^\infty(\R))$ is a solution of the
Cauchy problem \eqref{2}, then integrating \eqref{3bis} with respect to time
we get \eqref{2bis}, i.e. $T(u)=0$.
\end{Rem}

We are so reduced to prove the
existence of $0<T^*\leq T$ and of a unique solution $u\in
C^1([0,T^*];H^\infty(\R))$ of \eqref{4bis}.

To this aim we shall use the Nash-Moser Theorem~\ref{thNM}.
Note that $X=C^1([0,T];H^\infty(\R))$, with the family of semi-norms
\beqsn
\3|g\3|_n=\sup_{[0,T]}\left(\|g(t,\cdot)\|_n+\|D_tg(t,\cdot)\|_n\right),
\qquad n\in\N_0,
\eeqsn
is a tame space, see Example~\ref{proptame} in Appendix \ref{app:a}.

The map $T$ is smooth tame, since it is a composition of linear and nonlinear
operators and of integrations, which are all smooth tame by
Remark~\ref{remA1}, and since it does not contain time derivatives
(this is important since $\partial_t$ does not operate from $X$ to $X$, so
it cannot be a tame map).

In order to apply the Nash-Moser Theorem~\ref{thNM}, denoting by $DT(u)v$ the
Fr\'echet derivative of $T$ at $u$ in the direction $v$, we shall
prove that the
equation
$DT(u)v=h$ has a unique solution $v:=S(u,h)$ for all $u,h\in X$ and
that $S:\ X\times X\to X$ is smooth tame. This is going to be done in the
following lemmas.
\begin{Lemma}
For every $u,h\in X$, the equation $DT(u)v=h$ admits a unique solution $v\in X$, and the solution satisfies for every $n\in\N_0$ the following estimate:
\beqs
\label{6bis}
\|v(t,\cdot)\|_n^2\leq C_n(u)\left(\|h(0,\cdot)\|_{n+r}^2+\int_0^t
\|D_th(\tau,\cdot)\|_{n+r}^2d\tau\right)
\qquad\forall t\in[0,T],
\eeqs
for any $r\geq\sigma$,
with $C_n(u):=C_{n+\sigma,\gamma}\exp\big\{\big(1+\|u\|^{4p-3}_{4p-3}
\big)T\big\}$ as in \eqref{Eoldv}.
\end{Lemma}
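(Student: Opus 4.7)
The plan is to convert the functional equation $DT(u)v=h$ into an equivalent linear Cauchy problem for $v$ and then invoke Theorem~\ref{iop}.

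First, I would compute the Fr\'echet derivative explicitly by differentiating \eqref{heart}:
\beqsn
DT(u)v=v+i\int_0^t a_p(s)D_x^p v\,ds
+i\sum_{j=0}^{p-1}\int_0^t\bigl[a_j(s,x,u)D_x^j v+(D_w a_j)(s,x,u)\,v\,D_x^j u\bigr]\,ds.
\eeqsn
Setting $DT(u)v=h$ and evaluating at $t=0$ gives $v(0,x)=h(0,x)$, while applying $D_t$ to both sides converts the equation into
\beqsn
P_u(D)v(t,x)+c(t,x)v(t,x)=D_t h(t,x),\qquad v(0,x)=h(0,x),
\eeqsn
with the zero-order coefficient
\beqsn
c(t,x):=\sum_{j=0}^{p-1}(D_w a_j)(t,x,u(t,x))\,D_x^j u(t,x).
\eeqsn

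The second step is to verify that $c\in C([0,T];\mathcal B^\infty(\R))$ and that the perturbed operator $P_u(D)+c(t,x)$ still falls within the scope of Theorem~\ref{iop}. From \eqref{dew} with $\gamma=1,\,\beta=0$ the terms with $3\le j\le p-1$ in $c$ actually decay in $x$; the remaining $j=0,1,2$ contributions are bounded in $x$ by $C\gamma(u)$ (using \eqref{a1} for $D_w a_2$), so that $\|c(t,\cdot)\|_{\mathcal B^\infty}$ is controlled by $\gamma(u)\,\|u\|_p$ via Sobolev embedding. Since conditions \eqref{im}--\eqref{a1} of Theorem~\ref{th1} do not constrain the $a_0$-coefficient beyond boundedness in $x$, replacing $a_0(t,x,u)$ by $a_0(t,x,u)+c(t,x)$ preserves all hypotheses used in the sharp-G\aa rding / Fefferman-Phong analysis of Section~\ref{sec3}; in particular the bound on the zero-order perturbation is absorbed by the constant $C_{n+\sigma,\gamma}$ and by the polynomial factor in $\|u\|_{4p-3}$ already present in \eqref{Eoldv}.

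The third step is then a direct application of Theorem~\ref{iop} to the linear Cauchy problem with forcing $D_t h$ and initial datum $h(0,\cdot)$: we obtain existence and uniqueness of $v\in C([0,T];H^{s-\sigma}(\R))$ for every $s\in\R$ provided $D_t h\in C([0,T];H^s)$ and $h(0,\cdot)\in H^s$, together with
\beqsn
\|v(t,\cdot)\|_{s-\sigma}^2\le C_{s,\gamma}e^{(1+\|u\|_{4p-3}^{4p-3})t}
\Bigl(\|h(0,\cdot)\|_s^2+\int_0^t\|D_t h(\tau,\cdot)\|_s^2\,d\tau\Bigr).
\eeqsn
Taking $s=n+\sigma$ yields \eqref{6bis} for $r=\sigma$, and the case $r\ge\sigma$ follows from the monotonicity of Sobolev norms. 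Since $h\in X$ guarantees $D_t h\in C([0,T];H^\infty)$ and $h(0,\cdot)\in H^\infty$, the solution $v$ belongs to $C([0,T];H^\infty)$; from the equation $P_u(D)v+cv=D_t h$ we then read off $D_t v\in C([0,T];H^\infty)$, i.e.\ $v\in X$, proving that the map $h\mapsto v=:S(u,h)$ is well defined.

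The main technical issue I expect is bookkeeping rather than analysis: one must check that adding $c(t,x)$ to $a_0$ neither spoils the recursive choice of the constants $M_{p-1},\dots,M_1$ in Section~\ref{sec3} nor enlarges the power $4p-3$ appearing in the exponential. Since $c$ contributes only a bounded multiplier controlled by $\gamma(u)\|u\|_p\ll\|u\|_{4p-3}$, the exponent is unaffected and the proof of Theorem~\ref{iop} transfers verbatim.
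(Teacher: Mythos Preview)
Your proposal is correct and follows essentially the same route as the paper: compute $DT(u)v$ from \eqref{heart}, rewrite $DT(u)v=h$ as the linear Cauchy problem $\tilde P_u(D)v=D_th$, $v(0,x)=h(0,x)$, observe that the perturbation affects only the zero-order coefficient (on which conditions \eqref{ap}--\eqref{a1} impose nothing), and invoke Theorem~\ref{iop}. The paper makes exactly this argument, packaging the perturbed operator as $\tilde P_u(D)$ with $\tilde a_j=a_j$ for $j\ge1$ and $\tilde a_0=a_0+\sum_h\partial_w a_h\,D_x^h u$; your extra discussion of why $c$ does not disturb the constants $M_{p-k}$ or the exponent $4p-3$ is more detailed than the paper's one-line justification but not actually needed, since Theorem~\ref{iop} places no hypothesis on $a_0$ at all. (Minor slip: the chain rule produces $\partial_w a_j$, not $D_w a_j$.)
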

\begin{proof}
Let us compute by the definition \eqref{heart} of the map $T$, the Fr\'echet derivative of $T$, for $u,v\in X$:
\beqsn
DT(u)v=&&\lim_{\varepsilon\to0}\frac{T(u+\varepsilon v)-T(u)}{\varepsilon}\\
=&&\lim_{\varepsilon\to0}\Big\{v+i\int_0^ta_p(s)D_x^pv(s)ds
+i\sum_{j=0}^{p-1}\int_0^t
\frac{a_j(s,x,u+\varepsilon v)-a_j(s,x,u)}{\varepsilon}D_x^ju(s)ds\\
&&+i\sum_{j=0}^{p-1}\int_0^ta_j(s,x,u+\varepsilon v)D_x^jv(s)ds\Big\}\\
=&&v+i\int_0^t\hskip-0.25cma_p(s)D_x^pv(s)ds+i\sum_{j=0}^{p-1}\int_0^t\hskip-0.25cm
\partial_wa_j(s,x,u)v(s)D_x^ju(s)ds\\
&&
+i\sum_{j=0}^{p-1}\int_0^t\hskip-0.25cma_j(s,x,u)D_x^jv(s)ds\\
=&&v+i\int_0^t\hskip-0.25cma_p(s)D_x^pv(s)ds+i\sum_{j=1}^{p-1}\int_0^t\hskip-0.25cma_j(s,x,u)D_x^jv(s)ds
\\
&&+i\int_0^t\hskip-0.2cm\Big(a_0(s,x,u)+\sum_{j=0}^{p-1}\partial_wa_j(s,x,u)D_x^ju
\Big)v(s)ds\\
=&&v-0+i\int_0^t\hskip-0.25cma_p(s)D_x^pv(s)ds+i\sum_{j=0}^{p-1}\int_0^t\tilde{a}_j(s,x,u)D_x^jv(s)ds-0\\
=:&&T_{0,u,0}(v),
\eeqsn
where
\beqsn
\tilde{a}_j(s,x,u)=\begin{cases}
a_j(s,x,u), &\  1\leq j\leq p-1\cr
\ds a_0(s,x,u)+\sum_{h=0}^{p-1}\partial_wa_h(s,x,u)D_x^hu, &\  j=0
\end{cases}
\eeqsn
and for every $u,u_0,f\in X$ the map $T_{u_0,u,f}:\ X\to X$ is defined by
\beqsn
T_{u_0,u,f}v:=&&v(t,x)-u_0(x)+i\int_0^ta_p(s)D_x^pv(s,x)ds\\
&&+i\sum_{j=0}^{p-1}\int_0^t\tilde{a}_j(s,x,u(s,x))D_x^jv(s,x)ds-i\int_0^t
f(s,x)ds.
\eeqsn

As in Remark \ref{remE}, we notice that $v$ is a solution of $T_{u_0,u,f}(v)\equiv0$ if and only if
it is a solution of the linearized Cauchy problem
\beqsn
\begin{cases}
  \tilde{P}_u(D)v(t,x)=f(t,x)\cr
  u(0,x)=u_0(x),
\end{cases}
\eeqsn
where $\tilde{P}_u(D)$ is obtained from $P_u(D)$ substituting $a_j$ with
$\tilde{a}_j$.

Therefore $v$ is a solution of $DT(u)v=h$ if and only if
$T_{0,u,0}(v)=h$;
writing
\beqsn
T_{0,u,0}(v)-h=T_{0,u,0}(v)-h_0-i\int_0^tD_th(s,x)ds=T_{h_0,u,D_t h}(v)
\eeqsn
with $h_0:=h(0,x)$, we have that $v$ is a solution of $DT(u)v=h$ if and only if it is a solution of $T_{h_0,u,D_t h}(v)=0$, i.e. it is a solution of the linearized Cauchy problem
\beqs
\label{5bis}
\begin{cases}
   \tilde{P}_u(D)v(t,x)=D_th(t,x)\cr
  v(0,x)=h_0(x).
\end{cases}
\eeqs

We now want to apply Theorem~\ref{iop} with $\tilde{P}_u(D)$ instead of
$P_u(D)$.
Note that conditions \eqref{ap}-\eqref{a1} are the same for $\tilde{P}_u(D)$
and $P_u(D)$, since there are no conditions for $j=0$.
Applying Theorem~\ref{iop}
we have that, for any $u,h\in X$ there is a
unique solution $v\in X$ of \eqref{5bis} satisfying the energy estimate
\beqsn
\|v(t,\cdot)\|_n^2\leq C_{n+\sigma,\gamma}e^{\big(1+\|u\|^{4p-3}_{4p-3}
\big)T}\left(\|h_0\|_{n+r}^2+\int_0^t
\|D_th(\tau,\cdot)\|_{n+r}^2d\tau\right)
\qquad\forall t\in[0,T],
\eeqsn
for any $r\geq\sigma$, which is exactly \eqref{6bis}. This completes the proof of the Lemma.
\end{proof}

We can therefore define the map
\beqs\label{essemaiusc}
S:\ X\times X&&\longrightarrow X\\
\nonumber
(u,h)&&\longmapsto v,
\eeqs
where $v$ is the unique solution of the Cauchy problem
\eqref{5bis}, i.e. of $DT(u)v=h$, and satisfies the
energy estimate \eqref{6bis}.

\begin{Lemma}
The map $S$ defined in \eqref{essemaiusc} is smooth tame.
\end{Lemma}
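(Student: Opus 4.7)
\textbf{The plan is} to verify the two defining properties of a smooth tame map (Appendix \ref{app:a}) for $S$: a tame estimate for $S$ itself, together with the existence and tameness of all its Fréchet derivatives $D^k S$. The first will come from the energy estimate \eqref{6bis} combined with the linearized equation; the second from implicit differentiation of the defining identity $DT(u)\, S(u,h) = h$, exploiting the smooth tameness of $T$ (which is a composition of time integration, multiplication and the superposition operator $u \mapsto a_j(\cdot,\cdot,u)$ induced by the smooth coefficient $a_j$, all smooth tame by Remark \ref{remA1}).

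For the tame estimate, \eqref{6bis} controls $\sup_{[0,T]} \|v(t,\cdot)\|_n$ by $\3|h\3|_{n+r}$, with multiplicative constant $C_n(u)=C_{n+\sigma,\gamma}\exp\{(1+\|u\|_{4p-3}^{4p-3})T\}$. The crucial point is that $C_n(u)$ depends only on the \emph{fixed} low-order seminorm $\|u\|_{4p-3}$, so on any sublevel set of $\3|u\3|_{n_0}$ (with $n_0\geq 4p-3$) it is bounded by a continuous function of $\3|u\3|_{n_0}$. To recover the $\|D_t v\|_n$ piece of the seminorm $\3|v\3|_n$, I would read $D_t v$ off the linearized equation $\tilde P_u(D)v = D_t h$:
\beqsn
D_t v &=& D_t h - a_p(t)\, D_x^p v - \sum_{j=0}^{p-1} \tilde a_j(t,x,u)\, D_x^j v,
\eeqsn
and estimate each summand in $H^n$ via Sobolev product inequalities, using that $\tilde a_j(\cdot,u)$ depends tamely on $u$ through Fa\`a-di-Bruno expansions of the type already used in Lemma \ref{lemde}. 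Combined, these yield the required tame estimate
\beqsn
\3|S(u,h)\3|_n &\leq& C_n(\3|u\3|_{n_0})\bigl(\3|h\3|_{n+r} + \3|u\3|_{n+r}\3|h\3|_{n_0}\bigr).
\eeqsn

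For the smoothness and tameness of the derivatives, the linearity of $S$ in $h$ gives at once $D_h S(u,h)[k] = S(u,k)$. For the $u$-derivative, differentiating the defining identity $DT(u)\, S(u,h) = h$ in direction $w\in X$ yields
\beqsn
D_u S(u,h)[w] &=& -\,S\bigl(u,\ D^2 T(u)[w,\,S(u,h)]\bigr),
\eeqsn
and iterating this formula expresses each $D^k S$ as a finite composition of $S$ with higher derivatives of $T$. Since $T$ is smooth tame and compositions of smooth tame maps are smooth tame, all $D^k S$ are smooth tame. \emph{The main obstacle} is the careful bookkeeping of the composition estimates for $\tilde a_j(\cdot, u)$ and of the nested occurrences of $S$ in the formulas for $D^k S$: one must check that the nonlinear $u$-dependence always produces only polynomial growth in the top seminorm $\3|u\3|_{n+r}$, with all the (exponential) non-polynomial dependence safely confined to the fixed low seminorm $\3|u\3|_{n_0}$ on which the constants $C_n$ are allowed to depend continuously.
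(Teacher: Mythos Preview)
Your Step-1 argument for the tameness of $S$ (energy estimate \eqref{6bis} for $\|v\|_n$, then read $D_tv$ off the equation $\tilde P_u(D)v=D_th$) is exactly the paper's Step~1.

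For the higher derivatives you take a genuinely different route. The paper works \emph{explicitly}: for each $m$ it forms the difference quotient $w_m^\varepsilon$, proves $\{w_m^\varepsilon\}_\varepsilon$ is Cauchy in $X$ by writing down the linear Cauchy problem satisfied by $w_m^{\varepsilon_1}-w_m^{\varepsilon_2}$ and applying the energy estimate, identifies the limit $w_m=D^mS(u,h)(u_1,h_1)\cdots(u_m,h_m)$ as the solution of an explicit Cauchy problem $\tilde P_u(D)w_m=f_m$ with $f_m$ built from $\partial_w^k\tilde a_j$, the directions $u_1,\ldots,u_m$, and the previously constructed $w_0=v,\,w_1,\ldots,w_{m-1}$, and finally bounds $\3|w_m\3|_n$ inductively. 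Your recursion $D_uS(u,h)[w]=-S\bigl(u,\,D^2T(u)[w,S(u,h)]\bigr)$ encodes the same structure more compactly, and the tame bounds you would get by composing the tame map $S$ with the tame maps $D^kT$ are equivalent to the paper's direct estimates. Your approach gives cleaner bookkeeping; the paper's has the advantage that the existence of each Fr\'echet derivative is \emph{proved} rather than assumed.

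That is the one real gap in your plan. In a graded Fr\'echet space the formula for $D_uS$ is only a candidate until you know the defining limit exists; you cannot ``differentiate the identity $DT(u)S(u,h)=h$'' before $S$ is known to be differentiable, and you cannot invoke ``compositions of smooth tame maps are smooth tame'' because $S$ is precisely the map whose smooth tameness is at issue---at this stage you only know $S$ is tame, not smooth tame. To close the loop you must show that $\bigl(S(u+\varepsilon w,h)-S(u,h)\bigr)/\varepsilon$ actually converges in $X$ to your candidate, which amounts to writing the linear Cauchy problem it satisfies and invoking the energy estimate \eqref{Eoldv}---i.e.\ exactly the Cauchy-sequence argument the paper carries out in its Steps~2--4. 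Once that existence step is supplied at the first level, your inductive scheme (tame $S$ plus tame $D^kT$ plus ``composition of tame maps is tame'') does go through.
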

\begin{proof}
To prove that $S$ is smooth tame, we work by induction. The proof is divided into 4 steps. In steps $1$, $2$, $3$ we prove, respectively, that $S$, $DS$, $D^2S$ are tame maps; step 4 is the inductive step.

\smallskip

\noindent {\bf Step 1.} Let us show that $S$ is a tame map. To this aim we first remark that, for fixed $(u_0,h_0)\in X\times X$ and
$(u,h)$ in a neighborhood of $(u_0,h_0)$ we have that $C_n(u)$ is
bounded and hence, from the energy estimate \eqref{6bis},
\beqs
\label{46bis}
\|v(t,\cdot)\|_n^2\leq C'_n\3|h\3|_{n+r}^2
\qquad\forall t\in[0,T]
\eeqs
for some $C'_n>0$.
A similar estimate also holds for $D_tv$ since
\beqsn
\|D_tv(t,\cdot)\|_n=&&\Big\|\hskip-0.1cm-a_p(t)D_x^pv(t,\cdot)
-\hskip-0.1cm\sum_{j=0}^{p-1}\tilde{a}_j(t,\cdot,u)D_x^jv(t,\cdot)
\Big\|_n\hskip-0.2cm+\|D_th(t,\cdot)\|_n\\
\leq&& C(\|v(t,\cdot)\|_{n+p}+\3|h\3|_n)
\eeqsn
for some $C>0$.

Therefore
\beqs
\label{9bis}
\3|S(u,h)\3|_n=\sup_{t\in[0,T]}\left(\|v(t,\cdot)\|_n+\|D_tv(t,\cdot)\|_n
\right)
\leq C_n\3|h\3|_{n+r'}\leq C_n\3|(u,h)\3|_{n+r'}
\eeqs
for some $C_n>0$ and $r'\in \N, \ r'\geq\sigma+p$, and $S$ is tame.

\smallskip

\noindent{\bf Step 2.} We start by computing the Fr\'echet derivative of $S$,
for $(u,h),(u_1,h_1)\in X\times X$:
\beqs\nonumber
DS(u,h)(u_1,h_1)=&&\lim_{\varepsilon\to0}
\frac{S(u+\varepsilon u_1,h+\varepsilon h_1)-S(u,h)}{\varepsilon}\\\label{frech1}
=&&\lim_{\varepsilon\to0}\frac{v_\varepsilon-v}{\varepsilon}=\lim_{\varepsilon\to0}w_1^\varepsilon,
\eeqs
where $v$ is the solution of the Cauchy problem \eqref{5bis} and
$v_\varepsilon$ is the solution of the Cauchy problem
\beqsn
\begin{cases}
  \tilde{P}_{u+\varepsilon u_1}(D)v_\varepsilon=D_t(h+\varepsilon h_1)\cr
  v_\varepsilon(0,x)=h(0,x)+\varepsilon h_1(0,x).
\end{cases}
\eeqsn
Therefore
\beqs\label{venerdi}
\begin{cases}
  \tilde{P}_{u+\varepsilon u_1}(D)v_\varepsilon-\tilde{P}_u(D)v=\varepsilon D_t h_1\cr
  v_\varepsilon(0,x)-v(0,x)=\varepsilon h_1(0,x)
\end{cases}
\eeqs
and, writing explicitly the equation in \eqref{venerdi} we come to the equivalent equation
\beqsn
D_t(v_\varepsilon-v)+a_p(t)D_x^p(v_\varepsilon-v)
&+&\sum_{j=0}^{p-1}\tilde{a}_j(t,x,u+\varepsilon u_1)D_x^j
(v_\varepsilon-v)\\
&+&\sum_{j=0}^{p-1}\big(\tilde{a}_j(t,x,u+\varepsilon u_1)
-\tilde{a}_j(t,x,u)\big)D_x^jv
=\varepsilon D_th_1.
\eeqsn

This means that $w_1^\varepsilon$ in \eqref{frech1} satisfies
\beqsn
\begin{cases}
  \ds \tilde{P}_{u+\varepsilon u_1}w_1^\varepsilon=D_th_1
  -\sum_{j=0}^{p-1}\frac{\tilde{a}_j(t,x,u+\varepsilon u_1)-
\tilde{a}_j(t,x,u)}{\varepsilon}
  D_x^jv=:f_1^\varepsilon\cr
  w_1^\varepsilon(0,x)=h_1(0,x).
  \end{cases}
\eeqsn

If we prove that $\{w_1^\varepsilon\}_\varepsilon$ is a Cauchy sequence in $X$,
there
exists then $w_1\in X$ such that $w_1^\varepsilon\to w_1$ in $X$, so
that $DS(u,h)(u_1,h_1)=w_1$,
and $w_1$ solves the Cauchy problem
\beqs
\label{8bis}
\begin{cases}
  \tilde{P}_u(D)w_1=f_1\cr
  w_1(0,x)=h_1(0,x)
  \end{cases}
\eeqs
for
\beqs\label{A}
f_1:=\lim_{\varepsilon\to0}f_1^\varepsilon=D_th_1-\sum_{j=0}^{p-1}
\partial_w\tilde{a}_j(t,x,u)u_1D_x^jv.
\eeqs
Then, by Theorem~\ref{iop} the solution $w_1$ of the Cauchy problem
\eqref{8bis}
would satisfy the energy estimate
\beqsn
\|w_1(t,\cdot)\|_n^2\leq&&C_n(u)\Big(\|h_1(0,\cdot)\|_{n+r}^2+
\int_0^t\|f_1(\tau,\cdot)\|_{n+r}^2d\tau\Big)\\
\eeqsn
for $C_n(u)$ defined in \eqref{6bis}, and
\beqs
\nonumber
\|w_1(t,\cdot)\|_n\leq&&C'_n(u,u_1)\sup_{t\in[0,T]}\left(\|h_1(t,\cdot)\|_{n+r}+
\|D_th_1(t,\cdot)\|_{n+r}
+\|v(t,\cdot)\|_{n+r+p-1}\right)\\
\label{12bis}
\leq&& C''_n\left(\3|h_1\3|_{n+r'}+\3|h\3|_{n+r'}\right)
\eeqs
by \eqref{46bis},
for $(u,h)$ in a neighborhood of $(u_0,h_0)$ and
$(u_1,h_1)$ in a neighborhood of some fixed $(\bar{u}_1,\bar{h}_1)\in
X\times X$, and
for some $C'_n(u,u_1), C''_n>0$ and $r'\geq 2r+p-1$.
Also $$D_tw_1=-a_p(t)D_x^pw_1-\sum_{j=0}^{p-1}\tilde{a}_j(t,x,u)D_x^jw_1+f_1$$
would satisfy a similar estimate, so that the first derivative $DS$ would be
tame. Summing up, to gain that $DS$ is a tame map, it only remains to show that $\{w_1^\varepsilon\}_\varepsilon$
is a Cauchy sequence in $X$. To this aim, let us take $w_1^{\varepsilon_1}$ and $w_1^{\varepsilon_2}$ solutions, respectively, of the Cauchy problems
\beqs
\label{1011bis}
\begin{cases}
\tilde{P}_{u+\varepsilon_1u_1}(D)w_1^{\varepsilon_1}=f_1^{\varepsilon_1},\cr
w_1^{\varepsilon_1}(0,x)=h_1(0,x),
\end{cases}\qquad
\begin{cases}
\tilde{P}_{u+\varepsilon_2u_1}(D)w_1^{\varepsilon_2}=f_1^{\varepsilon_2,}\cr
w_1^{\varepsilon_2}(0,x)=h_1(0,x),
\end{cases}
\eeqs
then $w_1^{\varepsilon_1}-w_1^{\varepsilon_2}$ is solution of
\beqsn
&&D_tw_1^{\varepsilon_1}+a_p(t)D_x^pw_1^{\varepsilon_1}+
\sum_{j=0}^{p-1}\tilde{a}_j(t,x,u+\varepsilon_1u_1)D_x^jw_1^{\varepsilon_1}
-D_tw_1^{\varepsilon_2}-a_p(t)D_x^pw_1^{\varepsilon_2}
\\
&&-\sum_{j=0}^{p-1}
\tilde{a}_j(t,x,u+\varepsilon_2u_1)D_x^jw_1^{\varepsilon_2}
+\sum_{j=0}^{p-1}\tilde{a}_j(t,x,u+\varepsilon_1u_1)D_x^jw_1^{\varepsilon_2}
-\sum_{j=0}^{p-1}\tilde{a}_j(t,x,u+\varepsilon_1u_1)D_x^jw_1^{\varepsilon_2}
\\
&&=f_1^{\varepsilon_1}
-f_1^{\varepsilon_2}
\eeqsn
with initial condition $(w_1^{\varepsilon_1}-w_1^{\varepsilon_2})(0,x)=0$, i.e.
\beqsn
\begin{cases}
\tilde{P}_{u+\varepsilon_1u_1}(D)(w_1^{\varepsilon_1}-w_1^{\varepsilon_2})
=f_1^{\varepsilon_1}-f_1^{\varepsilon_2}
\ +\ds\sum_{j=0}^{p-1}\big(
\tilde{a}_j(t,x,u+\varepsilon_2u_1)-
\tilde{a}_j(t,x,u+\varepsilon_1u_1)\big)D_x^j
w_1^{\varepsilon_2}\cr
(w_1^{\varepsilon_1}-w_1^{\varepsilon_2})(0,x)=0.
\end{cases}
\eeqsn

By the energy estimate \eqref{Eoldv} and the Lagrange theorem, there exists
$u_{1,2}$ between $u+\varepsilon_1u_1$ and $u+\varepsilon_2u_1$ such that,
for all $t\in[0,T]$,
\beqsn
\|(w_1^{\varepsilon_1}-w_1^{\varepsilon_2})(t,\cdot)\|_n\leq C_n(u+\varepsilon_1u_1)
\bigg(&&\sup_{t\in[0,T]}\|f_1^{\varepsilon_1}(t,\cdot)
-f_1^{\varepsilon_2}(t,\cdot)\|_{n+r}\\
&&+\sum_{j=0}^{p-1}\sup_{t\in[0,T]}
\|\partial_wa_j(t,x,u_{1,2})(\varepsilon_1-\varepsilon_2)u_1D_x^j
w_1^{\varepsilon_2}\|_{n+r}\bigg)
\eeqsn
for some $C_n(u+\varepsilon_1u_1)>0$.
This goes to 0 as $\varepsilon_1\to\varepsilon_2\to0$ because
$f_1^{\varepsilon_1}
-f_1^{\varepsilon_2}\to 0$ and, being $H^s(\R)$ an algebra and satisfying
Sobolev inequality for $s>1/2$,
\beqsn
\|\partial_wa_j(t,x,u_{1,2})(\varepsilon_1-\varepsilon_2)u_1D_x^j
w_1^{\varepsilon_2}\|_{n+r}\leq
\|\partial_wa_j(t,x,u_{1,2})\|_{n+r}(\varepsilon_1-\varepsilon_2)
\|u_1\|_{n+r}\|w_1^{\varepsilon_2}\|_{n+r+j}
\eeqsn
is bounded for $(u,h)$ in a neighborhood of
$(u_0,h_0)$  and $(u_1,h_1)$ in a neighborhood of some fixed
$(\bar{u}_1,\bar{h}_1)\in X\times X$,
since $u_{1,2}$ is between $u+\varepsilon_1u_1$ and
$u+\varepsilon_2u_1$ and $\|w_1^{\varepsilon_2}\|_{n+r+j}$ is bounded by
the energy estimate
\beqsn
\|w_1^{\varepsilon_2}\|_{n+r+j}^2\leq C_{n+r+j}(u+\varepsilon_2 u_1)
\left(\|h_1(0,\cdot)\|_{n+2r+j}^2+\int_0^t
\|f_1^{\varepsilon_2}(\tau,\cdot)\|_{n+2r+j}^2d\tau\right).
\eeqsn

Then $\{w_1^{\varepsilon}\}_\varepsilon$ is a Cauchy sequence in $X$ and
the Fr\'echet derivative
\beqsn
DS:\ (X\times X)^2&&\longrightarrow X\\
\big((u,h),(u_1,h_1)\big)&&\longmapsto w_1,
\eeqsn
with $w_1$ solution of \eqref{8bis},
is tame by the above considerations.

\smallskip

\noindent{\bf Step 3.} Let us now consider the second derivative of $S$:
\beqsn
D^2S:\ (X\times X)^3&&\longrightarrow X\\
\big((u,h),(u_1,h_1),(u_2,h_2)\big)&&\longmapsto D^2S(u,h)(u_1,h_1)(u_2,h_2)
\eeqsn
defined by
\beqsn
D^2S(u,h)(u_1,h_1)(u_2,h_2)=&&
\lim_{\varepsilon\to0}\frac{DS(u+\varepsilon u_2,h+\varepsilon h_2)(u_1,h_1)-
DS(u,h)(u_1,h_1)}{\varepsilon}\\
=&&\lim_{\varepsilon\to0}\frac{\bar{w}_1^\varepsilon-w_1}{\varepsilon}=:\lim_{\varepsilon\to0}w_2^\varepsilon
\eeqsn
where $w_1$ is the solution of the Cauchy problem \eqref{8bis} and
$\bar{w}_1^\varepsilon$
is the solution of the Cauchy problem
\beqs\label{B}
\begin{cases}
  \ds \tilde{P}_{u+\varepsilon u_2}\bar{w}_1^\varepsilon=\bar{f}_1^\varepsilon:=D_th_1
  -\sum_{j=0}^{p-1}\partial_w
\tilde{a}_j(t,x,u+\varepsilon u_2)u_1
  D_x^jv\cr
  \bar{w}_1^\varepsilon(0,x)=h_1(0,x).
  \end{cases}
\eeqs
Writing
\beqsn
\bar{f}_1^\varepsilon-f_1=&&\tilde{P}_{u+\varepsilon u_2}(D)\bar{w}_1^\varepsilon
-\tilde{P}_u(D)w_1\\
=&&\tilde{P}_{u+\varepsilon u_2}(D)(\bar{w}_1^\varepsilon-w_1)+\sum_{j=0}^{p-1}
\big(\tilde{a}_j(t,x,u+\varepsilon u_2)-\tilde{a}_j(t,x,u)\big)D_x^jw_1
\eeqsn
we have from \eqref{A} and \eqref{B} that
\beqsn
&&\tilde{P}_{u+\varepsilon u_2}(D)w_2^\varepsilon
=\frac{f_1^\varepsilon-f_1}{\varepsilon}
-\sum_{j=0}^{p-1}\frac{\tilde{a}_j(t,x,u+\varepsilon u_2)-
\tilde{a}_j(t,x,u)}{\varepsilon}
D_x^jw_1\\
&&\hskip+0.5cm=-\sum_{j=0}^{p-1}
\frac{\tilde{a}_j(t,x,u+\varepsilon u_2)-\tilde{a}_j(t,x,u)}{\varepsilon}
D_x^jw_1
\ds-\sum_{j=0}^{p-1}\frac{\partial_w \tilde{a}_j(t,x,u+\varepsilon u_2)-
\partial_w\tilde{a}_j(t,x,u)}{\varepsilon}u_1D_x^jv
\\
&&\hskip+0.5cm=:f_2^\varepsilon
\eeqsn
i.e.
\beqsn
\begin{cases}
\ds \tilde{P}_{u+\varepsilon u_2}(D)w_2^\varepsilon=f_2^\varepsilon\cr
w_2^\varepsilon(0,x)=0.
\end{cases}
\eeqsn

Arguing as for $\{w_1^\varepsilon\}_\varepsilon$, we can prove that
$\{w_2^{\varepsilon}\}_\varepsilon$ is a Cauchy sequence and hence
$w_2^\varepsilon\to w_2\in X$, where $w_2$ is the solution of the Cauchy problem
\beqsn
\begin{cases}
\tilde{P}_u(D)w_2=-\ds\sum_{j=0}^{p-1}\partial_w \tilde{a}_j(t,x,u)u_2D_x^jw_1-
\ds\sum_{j=0}^{p-1}\partial^2_w\tilde{a}_j(t,x,u)u_1u_2D_x^jv=:f_2\cr
w_2(0,x)=0
\end{cases}
\eeqsn
and satisfies the following energy estimate
for $(u,h)$ in a neighborhood of $(u_0,h_0)$ and $(u_1,h_1)$, $(u_2,h_2)$
in a neighborhood of some fixed
$(\bar{u}_1,\bar{h}_1),(\bar{u}_2,\bar{h}_2)\in X\times X$:
\beqsn
\|w_2\|_n^2\leq&&C_n(u)\int_0^t\|f_2(\tau,\cdot)\|_{n+r}^2d\tau
\eeqsn
which gives
\beqsn
\|w_2\|_n\leq&&C'_n(u,u_1,u_2)
\sum_{j=0}^{p-1}\left(\|D_x^jw_1\|_{n+r}+\|D_x^jv\|_{n+r}\right)\\
\leq&&C'_n(u,u_1,u_2)\left(\|w_1\|_{n+r+p-1}+\|v\|_{n+r+p-1}\right)
\leq C''_n\left(\3|h_1\3|_{n+r''}+\3|h\3|_{n+r''}\right)
\eeqsn
for some $C'_n(u,u_1,u_2),C''_n>0$ and $r''\geq r+p-1+r'$, by \eqref{12bis} and
\eqref{46bis}. Then also $D^2S$ is tame.

\smallskip

\noindent{\bf Step 4.} We prove by induction on $m\in\N$
that, for all $m\geq2$,
\beqs
\label{15bis}
D^mS(u,h)(u_1,h_1)\cdots(u_m,h_m)=w_m
\eeqs
is the solution of the Cauchy problem
\beqs
\label{16bis}
\begin{cases}
\ds \tilde{P}_u(D)w_m=f_m\cr
w_m(0,x)=0,
\end{cases}
\eeqs
with
\beqs
\nonumber
f_m:=-\sum_{j=0}^{p-1}\partial_w\tilde{a}_j(t,x,u) u_mD_x^jw_{m-1}
\ds-\sum_{j=0}^{p-1}\partial_w^2\tilde{a}_j(t,x,u)u_{m-1}u_mD_x^jw_{m-2}
\\
\label{C''}
-\cdots\ds-\sum_{j=0}^{p-1}\partial_w^m\tilde{a}_j(t,x,u)u_1
\cdots u_{m-1}u_mD_x^jw_0
\eeqs
and $w_0:=v$, and satisfies, in a neighborhood of $(u,h)$, $(u_1,h_1)$,
\ldots $(u_m,h_m)$, the estimate
\beqs
\label{17bis}
\3|w_m\3|_n\leq C_n\sum_{j=0}^{m-1}\3|h_j\3|_{n+r(m)}
\eeqs
for some $C_n>0$ and $r(m)\in\N$, where $h_0:=h$.

Let us assume \eqref{15bis}-\eqref{17bis} to be true for all $j\leq m$ and let us prove them
for $j=m+1$:
\beqs
\nonumber
&&\quad D^{m+1}S(u,h)(u_1,h_1)\cdots(u_{m+1},h_{m+1})\\
\nonumber
=&&
\lim_{\varepsilon\to0}\frac{D^mS(u+\varepsilon u_{m+1}, h+\varepsilon u_{m+1})
(u_1,h_1)\cdots(u_m,h_m)-D^mS(u,h)(u_1,h_1)\cdots(u_m,h_m)}{\varepsilon}\\
\label{C'}
&&\qquad=\lim_{\varepsilon\to0}\frac{\bar{w}_m^\varepsilon-w_m}{\varepsilon}=:\lim_{\varepsilon\to0}w_{m+1}^\varepsilon,
\eeqs
where $w_m$ is the solution of \eqref{16bis} and $\bar{w}_m^\varepsilon$ is
the solution of
\beqsn
\begin{cases}
\ds \tilde{P}_{u+\varepsilon u_{m+1}}(D)\bar{w}_m^\varepsilon=f_m^\varepsilon\cr
\bar{w}_m^\varepsilon(0,x)=0
\end{cases}
\eeqsn
with
\beqs\nonumber
f_m^\varepsilon
:=-\sum_{j=0}^{p-1} \partial_w\tilde{a}_j(t,x,u+\varepsilon u_{m+1})u_mD_x^jw_{m-1}-\sum_{j=0}^{p-1}
\partial_w^2\tilde{a}_j(t,x,u+\varepsilon u_{m+1})u_{m-1}u_mD_x^jw_{m-2}
\\
\label{D}
-\cdots-\sum_{j=0}^{p-1}
\partial_w^m\tilde{a}_j(t,x,u+\varepsilon u_{m+1})u_1\cdots u_mD_x^jw_0.
\eeqs

Then
\beqs\nonumber
f_m^\varepsilon-f_m=&&\tilde{P}_{u+\varepsilon u_{m+1}}(D)\bar{w}_m^\varepsilon
-\tilde{P}_u(D)w_m\\
\nonumber
=&&D_t(\bar{w}_m^\varepsilon-w_m)+a_p(t)D_x^p(\bar{w}_m^\varepsilon-w_m)+\sum_{j=0}^{p-1}
\tilde{a}_j(t,x,u+\varepsilon u_{m+1})D_x^j(\bar{w}_m^\varepsilon
-w_m)\\
\nonumber
&&+\sum_{j=0}^{p-1}\big(\tilde{a}_j(t,x,u+\varepsilon u_{m+1})-
\tilde{a}_j(t,x,u)\big)D_x^jw_m\\
\label{C}
=&&\tilde{P}_{u+\varepsilon u_{m+1}}(D)(\bar{w}_m^\varepsilon-w_m)
+\sum_{j=0}^{p-1}\big(\tilde{a}_j(t,x,u+\varepsilon u_{m+1})-
\tilde{a}_j(t,x,u)\big)D_x^jw_m
\eeqs
and hence, by \eqref{C'}, \eqref{C}, \eqref{D} and \eqref{C''},
$w_{m+1}^\varepsilon$ is solution of the Cauchy problem
\beqsn
\begin{cases}
\tilde{P}_{u+\varepsilon u_{m+1}}(D)w_{m+1}^{\varepsilon}=f_{m+1}^{\varepsilon}\cr
w_{m+1}^{\varepsilon}(0)=0
\end{cases}
\eeqsn
where
\beqsn
f_{m+1}^\varepsilon:=&&-\sum_{j=0}^{p-1}
\frac{\tilde{a}_j(t,x,u+\varepsilon u_{m+1})-
\tilde{a}_j(t,x,u)}{\varepsilon}D_x^jw_m+\frac{f_m^\varepsilon-f_m}{\varepsilon}\\
=&&-\sum_{j=0}^{p-1}
\frac{\tilde{a}_j(t,x,u+\varepsilon u_{m+1})-
\tilde{a}_j(t,x,u)}{\varepsilon}D_x^jw_m\\
&&-\sum_{j=0}^{p-1}
\frac{\partial_w\tilde{a}_j(t,x,u+\varepsilon u_{m+1})-
\partial_w\tilde{a}_j(t,x,u)}{\varepsilon}u_mD_x^jw_{m-1}\\
&&-\sum_{j=0}^{p-1}
\frac{\partial^2_w\tilde{a}_j(t,x,u+\varepsilon u_{m+1})-
\partial^2_w\tilde{a}_j(t,x,u)}{\varepsilon}u_{m-1}u_mD_x^jw_{m-2}\\
&&\ \vdots\\
&&-\sum_{j=0}^{p-1}
\frac{\partial^m_w\tilde{a}_j(t,x,u+\varepsilon u_{m+1})-
\partial^m_w\tilde{a}_j(t,x,u)}{\varepsilon}u_1
\cdots u_mD_x^jw_0.
\eeqsn

Arguing as for $\{w_1^\varepsilon\}_\varepsilon$, we can prove that
$\{w_{m+1}^\varepsilon\}_\varepsilon$ is a Cauchy sequence and
therefore $w_{m+1}^\varepsilon\to w_{m+1}\in X$, where $w_{m+1}$ is the solution
of the Cauchy problem
\beqsn
\begin{cases}
\ds \tilde{P}_u(D)w_{m+1}=f_{m+1}\cr
w_{m+1}(0,x)=0,
\end{cases}
\eeqsn
with
\beqsn
f_{m+1}:&=&-\sum_{j=0}^{p-1}\partial_w \tilde{a}_j(t,x,u)u_{m+1}D_x^jw_m
-\sum_{j=0}^{p-1}\partial^2_w\tilde{a}_j(t,x,u)u_mu_{m+1}D_x^jw_{m-1}
\\
&&-\sum_{j=0}^{p-1}\partial^3_w\tilde{a}_j(t,x,u)
u_{m-1}u_mu_{m+1}D_x^jw_{m-2}
\cdots
-\sum_{j=0}^{p-1}\partial^{m+1}_w\tilde{a}_j(t,x,u)u_1\cdots
u_{m+1}D_x^jw_0
\eeqsn
and \eqref{16bis} is proved for $j=m+1$.
Moreover, by the energy estimate \eqref{Eoldv} and the inductive assumption
\eqref{17bis}
\beqsn
\|w_{m+1}\|_n^2\leq&&C_n(u)\int_0^t\|f_{m+1}(\tau,\cdot)\|_{n+r}^2d\tau
\eeqsn
and so, for
$(u,h)$ in a neighborhood of $(u_0,h_0)$ and $(u_1,h_1)$, \ldots, $(u_m,h_m)$
 in a neighborhood of some fixed
$(\bar{u}_1,\bar{h}_1),\ldots(\bar{u}_m,\bar{h}_m)\in X\times X$,
\beqsn
\|w_{m+1}\|_n\leq&&C'_n(u)\sum_{s=0}^m\sum_{j=0}^{p-1}
\|u_{m-s+1}\cdots u_{m}\cdot u_{m+1} D_x^jw_{m-s}\|_{n+r}\\
\leq&& C_n(u,u_1,\ldots,u_m)\sum_{j=0}^{p-1}\sum_{s=0}^m\|w_{m-s}\|_{n+r+j}\leq C'_n\sum_{j=0}^{p-1}\sum_{s=0}^m
\sum_{i=0}^{m-s-1}\3|h_i\3|_{n+r+j+r(m-s)}\\
\leq&&C''_n
\sum_{i=0}^{m-1}\3|h_i\3|_{n+r'(m)}
\eeqsn
for some $C'_n(u),C_n(u,u_1,\ldots,u_m),C'_n,
C''_n>0$, $r'(m)\in\N$.

Then also
\beqsn
\|D_tw_{m+1}\|_n=&&\|-a_p(t)D_x^pw_{m+1}-\sum_{j=0}^{p-1}\tilde{a}_j(t,x,u)
D_x^jw_{m+1}
+f_{m+1}\|_n\\
\leq&&C_n(u)\left(\|w_{m+1}\|_{n+p}+\|f_{m+1}\|_n\right)
\leq C'_n\sum_{i=0}^{m-1}\3|h_i\3|_{n+r''(m)}
\eeqsn
for some $C'_n>0$, $r''(m)=p+r'(m)$,
and for
$(u,h)$ in a neighborhood of $(u_0,h_0)$ and $(u_1,h_1)$, \ldots, $(u_m,h_m)$
 in a neighborhood of some fixed
$(\bar{u}_1,\bar{h}_1),\ldots(\bar{u}_m,\bar{h}_m)\in X\times X$.
Therefore \eqref{17bis} holds also for $m+1$.

We have thus proved \eqref{15bis}-\eqref{17bis}. In particular,
$D^mS$ is tame for every $m$ and hence $S$ is a smooth tame map.
\end{proof}
We are now ready to prove Theorem \ref{th1}.
\begin{proof}[Proof of Theorem \ref{th1}]
By Remark \ref{remE}, our goal is to look for a local solution $u$ of \eqref{2} as a
local solution of \eqref{4bis}, i.e. of
\beqs
\label{defu}
u(t,x)=&&u_0(x)-i\int_0^ta_p(s)D_x^pu(s,x)ds
-i\sum_{j=0}^{p-1}\int_0^ta_j(s,x,u(s,x))D_x^ju(s,x)ds\\
\nonumber
&&+i\int_0^tf(s,x)ds,
\eeqs
by definition \eqref{heart} of the map $T$.
To this aim, let us notice that from \eqref{defu} we have the Taylor expansion of the
first order of  $u$:
\beqs\nonumber
u(t,x)&=&u_0(x)-it\left(a_p(0)D_x^pu_0(x)+\sum_{j=0}^{p-1}
a_j(0,x,u_0(x))D_x^ju_0(x)
-f(0,x)\right)+o(t)
\\\label{defw}
&=:&w(t,x)+o(t),\qquad \mbox{as}\ t\to0.
\eeqs

The function $w\in X$ is in a neighborhood of the solution $u$
we are looking for,
if $t$ is sufficiently small. The idea of the proof is to
approximate $Tw$ by a function $\phi_\varepsilon$ identically zero for
$0\leq t\leq T_\varepsilon\leq T$ and apply
the Nash-Moser's Theorem~\ref{thNM}, in particular the fact that
$T$ is a bijection of a
neighborhood $U$ of $w$ onto a neighborhood $V$ of $Tw$.
If $\phi_\varepsilon\in V$, then by the local invertibility of $T$ there will be $u\in U$ such that
$Tu=\phi_\varepsilon\equiv0$ in $[0,T_\varepsilon]$ and hence the local
(in time) solution of \eqref{4bis} will be found.

To construct $\phi_\varepsilon$ we compute first (see the definition \eqref{heart} of $T$):
\beqsn
\partial_t (Tw(t,x))
=&&\partial_tw+ia_p(t)D_x^pw+i\sum_{j=0}^{p-1}a_j(t,x,w)D_x^jw-if(t,x),
\eeqsn
and using the definition \eqref{defw} of $w$ we get
\beqsn
\partial_t (Tw(t,x))=&&-ia_p(0)D_x^pu_0-i\sum_{j=0}^{p-1}a_j(0,x,u_0)D_x^ju_0+if(0,x)\\
&&+ia_p(t)D_x^pu_0+ta_p(t)D_x^p\Big(a_p(0)D_x^pu_0
+\sum_{j=0}^{p-1}a_j(0,x,u_0)D_x^ju_0-f(0,x)\Big)\\
&&+i\sum_{j=0}^{p-1}a_j(t,x,w)D_x^ju_0+t\sum_{j=0}^{p-1}a_j(t,x,w)D_x^j\Big(a_p(0)D_x^pu_0
+\sum_{j=0}^{p-1}a_j(0,x,u_0)D_x^ju_0\\
&&-f(0,x)\Big)-if(t,x)\\
=&&i[a_p(t)-a_p(0)]D_x^pu_0
+i\sum_{j=0}^{p-1}\big[a_j(t,x,w)-a_j(0,x,u_0)\big]D_x^ju_0\\
&&+a_p(t)tD_x^p\bigg[a_p(0)D_x^pu_0+\sum_{j=0}^{p-1}
a_j(0,x,u_0)D_x^ju_0-f(0,x)\bigg]\\
&&+\sum_{j=0}^{p-1}a_j(t,x,w)tD_x^j\bigg[a_p(0)D_x^pu_0+\sum_{s=0}^{p-1}
a_s(0,x,u_0)D_x^su_0-f(0,x)\bigg]\\
&&+i\big(f(0,x)-f(t,x)\big).
\eeqsn
Therefore
\beqsn
\|\partial_tTw(t,\cdot)\|_n\leq&&\sup_{t\in[0,T]}|a_p(t)-a_p(0)|
\cdot\|u_0\|_{n+p}+\sum_{j=0}^{p-1}\left\|\left[a_j(t,x,w)-a_j(0,x,u_0)\right]D_x^ju_0
\right\|_n\\
&&+t\sup_{t\in[0,T]}|a_p(t)|\cdot\left\|a_p(0)D_x^pu_0
+\sum_{s=0}^{p-1}a_s(0,x,u_0)D_x^su_0-f(0,x)\right\|_{n+p}\\
&&+t\sum_{j=0}^{p-1}\left\|a_j(t,x,w)D_x^j\left[a_p(0)D_x^pu_0+
\sum_{s=0}^{p-1}a_s(0,x,u_0)D_x^su_0-f(0,x)\right]\right\|_n\\
&&+\|f(0,x)-f(t,x)\|_n.
\eeqsn
By Lagrange theorem and for $t$ sufficiently small (so that $w$ is
in a sufficiently small neighborhood of $u_0$):
\beqs
\nonumber
\|\partial_tTw(t,\cdot)\|_n\leq&&C_0(a_p,u_0)t+\sum_{j=0}^{p-1}
C(a_j,u_0)t+C_1(a_p,\ldots,a_0,u_0,f)t+C_1(f)t\\
\label{19bis}
\leq&&C(a_p,\ldots,a_0,u_0,f)t
\eeqs
for some positive constants $C_0(a_p,u_0)$, $C(a_j,u_0)$, $C_1(a_p,\ldots,a_0,u_0,f)$, $C_1(f)$,
$C(a_p,\ldots,a_0,u_0,f)$ depending only on the variables specified there.

Let us now choose $\rho\in C^\infty(\R)$ with $0\leq\rho\leq1$ and
\beqsn
\rho(s)=\begin{cases}
0, &s\leq1\cr
1, &s\geq2.
\end{cases}
\eeqsn
Define then
\beqsn
\phi_\varepsilon(t,x):=
\int_0^t\rho\left(\frac s\varepsilon\right)(\partial_tTw)(s,x)ds,
\eeqsn
and note that $\phi_\varepsilon\equiv0$ for $0\leq t\leq\varepsilon$. We are going to prove
that, for every fixed neighborhood $V$ of $Tw$ in the topology of
$X=C^1([0,T];H^\infty(\R))$, we have $\phi_\varepsilon\in V$ if
$\varepsilon$ is sufficiently small.
Indeed, by definition of $\phi_\varepsilon$ and using \eqref{19bis},
\beqs
\nonumber
\|Tw-\phi_\varepsilon\|_n=&&\left\|\int_0^t
\left(1-\rho\left(\frac s\varepsilon\right)\right)
(\partial_tTw)(s,\cdot)ds\right\|_n\leq
\int_0^{2\varepsilon}\left\|\left(1-\rho\left(\frac s\varepsilon\right)\right)
(\partial_tTw)(s,\cdot)\right\|_n ds\\
\label{20bis}
\leq&&C(a_p,\ldots,a_0,u_0,f)
\int_0^{2\varepsilon}\!s\,ds=C(a_p,\ldots,a_0,u_0,f)2\varepsilon^2.
\eeqs
Moreover
\beqs
\nonumber
\|\partial_t(Tw-\phi_\varepsilon)\|_n=&&\left\|\partial_tTw(t,\cdot)-
\rho\left(\frac t\varepsilon\right)(\partial_tTw)(t,\cdot)\right\|_n\leq\left(1-\rho\left(\frac t\varepsilon\right)\right)
\|\partial_tTw(t,\cdot)\|_n\\
\label{21bis}
\leq&&
\left(1-\rho\left(\frac t\varepsilon\right)\right)
C(a_p,\ldots,a_0,u_0,f)t\leq2C(a_p,\ldots,a_0,u_0,f)\varepsilon,
\eeqs
again by \eqref{19bis} and looking at the support of $1-\rho\left(t/\varepsilon\right)$.

From \eqref{20bis} and \eqref{21bis} we thus have, for
$0<\varepsilon<1$, that
\beqsn
\3|Tw-\phi_\varepsilon\3|_n\leq 2C(a_p,\ldots,a_0,u_0,f)\varepsilon
\eeqsn
and hence $\phi_\varepsilon\in V$ for $\varepsilon$ sufficiently small,
where $V$ is the neighborhood of $Tw$ such that $T:\ U\to V$ is
invertible.

Then, there exists $u\in U\subset X$ such that $Tu=\phi_\varepsilon$ and
hence, in particular,
\beqsn
Tu\equiv0\qquad\mbox{for}\ 0\leq t\leq\varepsilon.
\eeqsn

This proves that $u\in C^1([0,\varepsilon];H^\infty(\R))$ is a
local solution of the Cauchy problem \eqref{2}.

Uniqueness follows by standard arguments.
As a matter of fact, if $u,v$ are two solutions of the Cauchy problem
\eqref{2}, we have
\beqsn
0=P_u(D)u-P_v(D)v
=&&P_u(D)(u-v)+\sum_{j=0}^{p-1}\big(a_j(t,x,u)-a_j(t,x,v)\big)D_x^jv\\
=&&P_u(D)(u-v)+\sum_{j=0}^{p-1}\int_v^u\partial_wa_j(t,x,s)ds\,D_x^jv\\
=&&P_u(D)(u-v)+\sum_{j=0}^{p-1}\int_0^1\partial_wa_j(t,x,v+t(u-v))(u-v)dt
\,D_x^jv\\
=&&\left( P_u(D)+\sum_{j=0}^{p-1}\int_0^1\partial_wa_j(t,x,v+t(u-v))dt
\,D_x^jv\right)(u-v)\\
=:&&\tilde{\tilde{P}}(u-v).
\eeqsn
Therefore, for fixed $u,v\in X$, the function $w:=u-v$ solves the linear
Cauchy problem
\beqs
\label{CPC1}
\begin{cases}
\tilde{\tilde P}w=0\cr
w(0,x)=0,
\end{cases}
\eeqs
and since $\tilde{\tilde P}$ is of the same form as $P_u(D)$ with
\beqsn
\tilde{\tilde a}_0(t,x,u):=a_0(t,x,u)+
\sum_{j=0}^{p-1}\int_0^1\partial_wa_j(t,x,v+t(u-v))dt
\,D_x^jv
\eeqsn
instead of $a_0(t,x,u)$, and has therefore the same kind of regularity on
the coefficients. By the uniqueness of the linearized Cauchy problem (given by
Theorem~\ref{thabz}), we finally have that $w=0$.
Therefore $u=v$ and uniqueness is proved.
\end{proof}

\section{Further generalizations}
\label{gen}
In this section we focus on generalizations of Theorem \ref{th1};
we first consider the Cauchy problem \eqref{2} with $a_p=a_p(t,x)$, $x\in\R$,
and then give an idea on how the result can be extended to the case $x\in\R^n$.

The dependence of $a_p$ on $x$ means
that in the explicit expression of the symbol $\sigma(e^{-\Lambda }A e^\Lambda)$
in \eqref{33} some new terms containing $D_x^\beta a_p(t,x)$ appear for
$\beta\neq 0$.

By assuming
$a_p\in C([0,T],\mathcal B^\infty(\R))$ with $a_p(t,x)\in\R$ and
 \beqs
  \label{1101}
  &&\quad \ |\Im (D_x^\beta a_p)(t,x)|\leq
\frac{C}{\langle x\rangle^{\frac{p-[\beta/2]}{p-1}}}\,,\quad 0\leq
\left[\frac\beta 2\right]\leq p-1,\ \beta\neq0
\\
\label{1102}
&&\quad \ |\Re (D_x^\beta a_p)(t,x)|\leq C, \qquad 0\leq
\beta\leq p-1,
\eeqs
in analogy with  \eqref{im}-\eqref{re},
we shall retrace here below the proof of Theorem \ref{th1}. 

Notice that $a_p\in C([0,T];{\mathcal B}^\infty(\R))$ implies that
condition \eqref{1102} is authomatically satisfied, while $a_p$ real valued
implies
that condition \eqref{1101} reduces to 
\beqsn
\quad\ |D_x^\beta a_p(t,x)|\leq
\frac{C}{\langle x\rangle^{\frac{p-[\beta/2]}{p-1}}}\,,
\qquad \ 0\leq\left[\frac\beta 2\right]\leq p-1,\ \beta\ \mbox{odd}.
\eeqsn
Therefore we can prove the following theorem:

\begin{Th}
  \label{thxxx}
  Let $p\geq2$ and consider the following $p$-evolution operator:
  \beqs
  \label{Px}
P_u(D)u:=&&D_tu +a_p(t,x)D_x^pu+\ds\sum_{j=0}^{p-1}a_j(t,x,u)
D_x^ju,
  \eeqs
  where $a_p\in C([0,T];{\mathcal B}^\infty(\R))$ with $a_p(t,x)\in \R$,
  and $a_j\in C([0,T];C^\infty(\R\times \C))$ with
  $x\mapsto a_j(t,x,w)\in{\mathcal B}^\infty(\R)$, for
  $0\leq j\leq p-1$.

  Let us assume that there exist constants $C_p>0$ and $C>0$ and a function
$\gamma:\ \C\to\R^+$ of class $C^7$
such that, for all $(t,x,w)\in[0,T]\times\R\times\C$:
\beqs
\label{apcp}
&&\quad \ a_p(t,x)\geq C_p,
\\
\nonumber
&&\quad\ |D_x^\beta a_p(t,x)|\leq
\frac{C}{\langle x\rangle^{\frac{p-[\beta/2]}{p-1}}},
\qquad  0\leq
\left[\frac\beta 2\right]\leq p-1,\ \beta\ \mbox{odd},
\\
\nonumber
&&\quad \ |\Im (D_x^\beta a_j)(t,x,w)|\leq
\frac{C \gamma(w)}{\langle x\rangle^{\frac{j-[\beta/2]}{p-1}}},\quad 0\leq
\left[\frac\beta 2\right]\leq j-1,\ 3\leq j\leq p-1,
\\
\nonumber
&&\quad \ |\Re (D_x^\beta a_j)(t,x,w)|\leq C \gamma(w)\qquad 0\leq \beta\leq j-1,\
3\leq j\leq p-1,
\\
\nonumber
&&\quad \ | (D_w^\gamma D_x^\beta a_j)(t,x,w)|\leq
\frac{C \gamma(w)}{\langle x\rangle^{\frac{j-[(\gamma+\beta)/2]}{p-1}}},\quad
\gamma\geq 1, \beta\geq 0, \left[\frac{\gamma+\beta} 2\right]\leq j-1,\
3\leq j\leq p-1,
\\
\nonumber
&&\quad \ |\Re a_2(t,x,w)|\leq C\gamma(w),
\\
\nonumber
&&\quad \ |\Im a_2(t,x,w)|\leq \frac{C\gamma(w)}{\langle x\rangle^{\frac{2}{p-1}}},
\\
\nonumber
&&\quad \ |\Im a_1(t,x,w)|+|\Im D_x a_2(t,x,w)|+|D_w a_2(t,x,w)|\leq
\frac{C\gamma(w)}{\langle x\rangle^{\frac{1}{p-1}}}.
\eeqs

Then the Cauchy problem \eqref{2}, for $P_u(D)$ defined as in \eqref{Px},
is locally in time well-posed in $H^\infty$:
for all $f\in C([0,T];H^\infty(\R))$ and $u_0\in H^\infty(\R)$, there
exists $0<T^*\leq T$ and a unique solution $u\in C([0,T^*];
H^\infty(\R))$ of \eqref{2}.
\end{Th}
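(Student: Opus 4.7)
The plan is to mirror the two-stage scheme of the proof of Theorem \ref{th1}: first produce a linear energy estimate of the form \eqref{Eoldv} for the linearised operator associated with \eqref{Px}, then apply the Nash--Moser theorem to the map $T$ of \eqref{heart} with $a_p(s)$ replaced by $a_p(s,x)$. The Nash--Moser half of the argument---tameness of $T$ and of the solver $S$ of $DT(u)v=h$, the construction of the approximating $\phi_\varepsilon$, and the uniqueness step---transfers verbatim, because it is purely structural once a linear estimate is available and because $a_p$, being $u$--independent, drops out of the Fr\'echet derivatives computed in Section~\ref{sec4}. All the genuinely new work is therefore confined to reproving Theorem~\ref{iop} when $a_p$ has $x$--dependence.

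The single place where that dependence matters is Step~1, the computation of $\sigma(e^{-\Lambda}Ae^\Lambda)$. In the expansion \eqref{33}, $A_{II}$ now acquires the extra monomials with $j=p$ and $\beta\geq 1$, namely $(\partial_\xi^\alpha e^{-\Lambda})(iD_x^\beta a_p)(D_x^{m+\alpha-\beta}e^\Lambda)\xi^{p-m}$, which vanished when $a_p=a_p(t)$; analogous new pieces appear in $A_{I}$ through the commutator $[e^{-\Lambda},ia_p\xi^p]$. Condition \eqref{1101} is tailored so that $|\Im D_x^\beta a_p|\leq C\langle x\rangle^{-(p-[\beta/2])/(p-1)}$, precisely the analogue of \eqref{im} at the level $j=p$, while $|\Re D_x^\beta a_p|\leq C$ is automatic from $a_p\in C([0,T];\mathcal{B}^\infty)$. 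Inserting these bounds into the same bookkeeping that produced \eqref{AC1}--\eqref{AC2} and using \eqref{27}--\eqref{28} shows that each new term contributing to $\left.A_{II}\right|_{\ord(p-k)}$ still satisfies \eqref{stp3}, with a constant $C_{(M_{p-1},\ldots,M_{p-k})}$ that does \emph{not} acquire any factor of $\gamma(u)$ or of a Sobolev norm of $u$. A convenient parity observation is that $a_p$ real valued forces $\Re(D_x^\beta a_p)=0$ for $\beta$ odd and $\Im(D_x^\beta a_p)=0$ for $\beta$ even, which is exactly why \eqref{1101} in Theorem~\ref{thxxx} only needs to be stated for $\beta$ odd.

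Once Step~1 is adjusted, Steps~2--5 of the proof of Theorem~\ref{iop} go through verbatim: the recursive choice of the constants $M_{p-1},\ldots,M_1$, the iterated application of the sharp-G\aa rding Theorem~\ref{thA1} to terms of order $\geq 3$, the Fefferman--Phong inequality at level $2$, and the passage from $e^{-\Lambda}Ae^\Lambda$ to the full $(e^\Lambda)^{-1}Ae^\Lambda$ via \eqref{li} all depend only on the structural estimate \eqref{stp3}, not on the identity of the coefficients. The final inequality \eqref{sumup} and hence \eqref{Eoldv} follow with the same exponent $4p-3$, since the new contributions bring no extra derivatives of $u$. With the linear estimate in hand, the Nash--Moser argument of Section~\ref{sec4}, together with Remark~\ref{remE} and the approximation by $\phi_\varepsilon$, produces the local solution and its uniqueness.

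The main obstacle is the combinatorial bookkeeping in Step~1: one has to list every new monomial arising from $D_x^\beta a_p\neq 0$, verify that each sits at an order $p-k\leq p-1$ compatible with \eqref{PK1}, and that, after combining the decay from \eqref{1101} with the $\langle x\rangle^{-\beta}$ provided by \eqref{28} and the $\langle x\rangle^{(k-1)/(p-1)q_k}$ from the $\partial_\xi^\alpha e^{\pm\Lambda}$ factors, the total power of $\langle x\rangle$ does not exceed $-(p-k)/(p-1)$. This is in essence the same calculation that underlies Step~1 of the proof of Theorem~\ref{thabz} in \cite{ABZ}, extended to include the $j=p$, $\beta\geq 1$ block; the algebraic mechanism is unchanged, but the verification has to be spelled out term by term to confirm that no monomial with insufficient decay at a given level is produced.
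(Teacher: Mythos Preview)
Your overall strategy is the paper's: the Nash--Moser half is untouched, and the only work lies in Step~1 of Theorem~\ref{iop}. But you miss the one genuine obstruction. The dangerous new monomial is $-(\partial_\xi\lambda_{p-k})(\partial_x a_p)\xi^p$, coming from the $j=p$, $m=0$, $\alpha=\beta=1$ piece of \eqref{AC2}. For $k\geq2$, by \eqref{PK2} and the decay hypothesis on $\partial_x a_p$, this term has order exactly $p-k$, \emph{not} $p-k-1$ as \eqref{PK1} would predict, and it carries the factor $M_{p-k}$. The recursion in Steps~2--3 hinges on the structural fact that $\left.A_{II}\right|_{\ord(p-k)}$ depends only on $M_{p-1},\ldots,M_{p-k+1}$, so that $M_{p-k}$ is still free to be chosen large; a bad term at level $p-k$ that itself scales with $M_{p-k}$ makes that choice circular. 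Your assertion that every new term ``sits at an order $p-k\leq p-1$ compatible with \eqref{PK1}'' is therefore false for precisely this monomial.

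The paper's resolution exploits the parameter $h$. From \eqref{PK2} and $|\partial_x a_p|\leq C\langle x\rangle^{-p/(p-1)}$ one gets
\[
\big|(\partial_\xi\lambda_{p-k})(\partial_x a_p)\xi^p\big|
\leq C_{1,0}C\,\frac{M_{p-k}}{\langle\xi\rangle_h}\,
\frac{\langle\xi\rangle_h^{\,p-k+1}}{\langle x\rangle^{(p-k+1)/(p-1)}}
\leq C_{1,0}C\,
\frac{\langle\xi\rangle_h^{\,p-k+1}}{\langle x\rangle^{(p-k+1)/(p-1)}}
\]
as soon as $h\geq M_{p-k}$. The term is then reassigned to level $p-k+1$, where it has the correct decay and a constant independent of every $M_{p-j}$; taking $h\geq\max\{M_{p-2},\ldots,M_1\}$ (and large enough for the invertibility of $e^\Lambda$) closes the recursion. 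For $k=1$ there is no issue, since $\partial_\xi\lambda_{p-1}$ is supported where $\langle x\rangle$ and $\langle\xi\rangle_h^{p-1}$ are comparable, forcing that piece to order zero. One minor correction: $A_I$ in \eqref{33} involves only the undifferentiated coefficients $a_j$, so it is literally unchanged when $a_p$ acquires $x$-dependence; all new contributions live in $A_{II}$.
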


\begin{proof}
  We remark that from the assumptions \eqref{1101}-\eqref{1102} we can obtain
  \eqref{euna}-\eqref{edue} also for $j=p$ (indeed, inequalities
\eqref{VAR2}-\eqref{VAR3} are valid also for $j=p$ with a fixed
constant $\gamma(u)=\gamma$).

Now we follow the proof of Theorem~\ref{iop} (see also \cite{ABZ})
outlining the needed changes.
In formula \eqref{33} of step 1, on the one hand the symbol $A_I$ remains
unvaried if $a_p=a_p(t,x)$, so \eqref{38} is unvaried too; on the other hand, 
$A_{II}$ has terms $D_x^\beta a_p(t,x)$ which are now different from zero also
for $\beta\neq0$.
Deriving \eqref{34} from \eqref{AC1} and \eqref{AC2} we thus have to take
into account these terms.
The estimates of the order and decay of the terms in \eqref{AC1} are the same
as in the case $a_p=a_p(t)$, while in the estimate of the order and decay
of the terms in \eqref{AC2}, the only term which works differently from the
case $a_p=a_p(t)$ is $-(\partial_\xi\Lambda)(D_xia_p)\xi^p$.
This is sum of terms of the form
$-(\partial_\xi\lambda_{p-k})(\partial_x a_p)\xi^p$: for $k=1$ the term
$-(\partial_\xi\lambda_{p-1})(\partial_x a_p)\xi^p$ is of order zero because
$\partial_\xi\lambda_{p-1}$ has support in the set
$\{\frac12\langle\xi\rangle_h^{p-1}\leq\langle x\rangle\leq
\langle\xi\rangle_h^{p-1}\}$;
the other terms
\beqs
\label{PK4}
-(\partial_\xi\lambda_{p-k})(\partial_x a_p)\xi^p,
\qquad\  2\leq k\leq p-1,
\eeqs
have order $p-k$ and not $p-k-1$, as it was in \eqref{PK1}.

Therefore all terms of $\Re \left.A_{II}\right|_{\ord(p-k)}$ satisfy
\eqref{34} except for the terms in \eqref{PK4}, that we shall treat
separately here below, following the same ideas as in the proof of the
ivertibility of $e^\Lambda$ (cf. \cite{ABZ}).

By \eqref{PK2} and \eqref{1101}:
\beqs
\nonumber
|\partial_\xi\lambda_{p-k}\partial_x a_p\xi^p|\leq &&
C_{1,0}M_{p-k}\langle x\rangle^{\frac{k-1}{p-1}}\langle\xi\rangle_h^{-k}
\chi_E(x)\frac{C}{\langle x\rangle^{\frac{p}{p-1}}}\langle\xi\rangle_h^p\\
\nonumber
=&&C_{1,0}C\,\frac{M_{p-k}}{\langle\xi\rangle_h}\,
\frac{\langle\xi\rangle_h^{p-k+1}}{\langle x\rangle^{\frac{p-k+1}{p-1}}}
\chi_E(x)\\
\nonumber
\leq&&C_{1,0}C\,\frac{M_{p-k}}{h}\,
\frac{\langle\xi\rangle_h^{p-k+1}}{\langle x\rangle^{\frac{p-k+1}{p-1}}}\\
\label{PK3}
\leq&&C_{1,0}C\,\frac{\langle\xi\rangle_h^{p-k+1}}{\langle x\rangle^{\frac{p-k+1}{p-1}}}
\eeqs
if $h\geq M_{p-k}$.

This means that we can insert $(\partial_\xi\lambda_{p-k})(\partial_x a_p)\xi^p$
at level $p-k+1$, since it has the ``right decay'' for the level
$p-k+1$ and satisfies an estimate of the form \eqref{stp3}, with a constant
$C_{1,0}C$ that does not depend on any of the $M_{p-k}$, for $k\geq2$.

Therefore we shall insert $(\partial_\xi\lambda_{p-k})(\partial_x a_p)\xi^p$ in
$\Re \left.A_{II}\right|_{\ord(p-k+1)}$ instead of
$\Re \left.A_{II}\right|_{\ord(p-k)}$, for $2\leq k\leq p-1$, and act as if
\eqref{34} holds as it was in Theorem~\ref{iop}.

All the other steps are based on the estimates \eqref{38} and \eqref{34},
so that their proof follows 
as in Theorem \ref{iop}, thanks to the added assumptions
\eqref{1101}, \eqref{1102}, which ensure that the new terms still have
the "right decay for the right level" and depend on the ``right
constants'' $M_{p-k}$, if we choose $h\geq\max\{M_{p-2},M_{p-3},
\ldots,M_1\}$ and large enough to ensure the invertibility of the operator
$e^\Lambda$ as in \eqref{sole}.

Finally the Nash-Moser scheme of Section \ref{sec4} does not involve the
dependence of $a_p$ on $x$.

Therefore Theorem \ref{thxxx} is proved.
\end{proof}

We conclude this paper with the following remark about the generalization
of Theorems \ref{th1} and \ref{thxxx} to the case $x\in\R^n$, $n\geq 2$.

\begin{Rem}\label{Rn}
  \begin{em}
    In the proof of Theorems \ref{th1} and \ref{thxxx}, the
    symbol $\Lambda=\lambda_1+\cdots+\lambda_{p-1}$ was constructed in
    \eqref{26},
  following \cite{KB},  in a way such that in Steps 2 and 3 of the
  proof of Theorem \ref{iop}, in order to apply the sharp-G{\aa}rding
  theorem, we got
    \beqs
  \label{?}
  \quad \Re (ipa_pD_x\lambda_{p-k}\xi^{p-1}+i\tilde a_{p-k})= pa_p
  \partial_x\lambda_{p-k}\xi^{p-1}-\Im \tilde a_{p-k}\geq -C(u), \quad k=1,
  \ldots,p-1,
  \eeqs
  where $\tilde a_{p-k}$ was given by the sum of $a_{p-k}\xi^{p-k}$ and (possible)
  other symbols of order $p-k$ with the "right decay for the right level"
  and dependence on the ``right constants''.
  
  For the case of more space variables we have to choose
  $\Lambda=\lambda_1+\cdots+\lambda_{p-1}$ in order that it satisfies a
  pseudo-differential inequality of the form:
\beqs
  \label{466}
  \sum_{j=1}^npa_p(t,x)\partial_{x_j}\lambda_{p-k}|\xi|^{p-2}\xi_j-\Im
  \tilde{\tilde{a}}_{p-k}
  \geq -C(u), \quad k=1,
  \ldots,p-1,
  \eeqs  
  where $\tilde{\tilde{a}}_{p-k}$ is a symbol of order $p-k$ with the
  ``right decay'' and depending on the ``right constants'', i.e.
  \beqs
  \label{448}
  |\Im \tilde{\tilde{a}}_{p-k}|\leq C(M_{p-1},\ldots,M_{p-k+1})
  \frac{\langle\xi\rangle_h^{p-k}}{\langle
    x\rangle^{\frac{p-k}{p-1}}}
    \eeqs
    for some $C(M_{p-1},\ldots,M_{p-k+1})>0$.

    A solution $\lambda_{p-k}$ to \eqref{466} can be constructed,
    following the ideas of \cite{KB}, by solving the equation
  \beqs
  \label{467}
  \sum_{j=1}^npC_p\partial_{x_j}\lambda_{p-k}|\xi|^{p-2}\xi_j=|\xi|g_k(x,\xi)
  \eeqs
  for $C_p$ as in \eqref{apcp}, and
  for some positive function $g_k(x,\xi)$ with a decay as in \eqref{448} and
  large enough so that \eqref{466} will therefore be satisfied.

  But a solution, for large $|\xi|$, of an equation of the form
  \beqsn
  \sum_{j=1}^n\xi_j\partial_{x_j}\lambda(x,\xi)
  =|\xi|g(x,\xi)
  \eeqsn
is given by (cf. \cite{KB})
\beqs
\label{deflambda}
  \lambda(x,\xi)=
  \int_0^{x\cdot\xi/|\xi|}g\left(x-\tau\frac{\xi}{|\xi|},\xi\right)d\tau,
  \eeqs
  so that the functions $\lambda_{p-k}$ can be constructed by means of
  functions  $\lambda$ for the form \eqref{deflambda},
  as explained in \cite{CR2}.
\end{em}
\end{Rem}


\section{Appendices}

\begin{appendices}
\section{The Nash-Moser Theorem} \label{app:a}

We recall here the basic notion of the Nash-Moser theory as in \cite{DA},
\cite{H}.

\begin{Def}
A {\em graded} Fr\'echet space $X$ is a Fr\'echet space whose topology is
generated by a {\em grading}, i.e. an increasing sequence of semi-norms:
\beqsn
\|x\|_n\leq\|x\|_{n+1},\qquad \forall n\in\N_0, \, x\in X.
\eeqsn
\end{Def}

\begin{Def}
For two graded Fr\'echet spaces $X,Y$, a linear map $L:\ X\to Y$ is said to be
a {\em tame linear map} if there exist $r,n_0\in\N$ such that for every
integer $n\geq n_0$ there is a constant $C_n>0$, depending only on $n$, s.t.
\beqs
\label{2.2DA}
\|Lx\|_n\leq C_n\|x\|_{n+r}\qquad\forall x\in X.
\eeqs
The number $n_0$ is called the {\em base} and $r$ the {\em degree} of the
{\em tame estimate} \eqref{2.2DA}.
\end{Def}
\begin{Def}
  Given a Banach space $B$, the {\em space of exponentially decreasing
    sequences} $\Sigma(B)$ is the graded space of all sequences
  $\{v_k\}_{k\in\N_0}\subset B$ such that
  \beqsn
  \|\{v_k\}\|_n:=\sum_{k=0}^{+\infty}e^{nk}\|v_k\|_B<+\infty\qquad
  \forall n\in\N_0.
  \eeqsn
  \end{Def}
\begin{Def}
  A graded space $X$ is said to be {\em tame} if there exist a Banach space $B$
  and two tame linear maps $L_1:\, X\to\Sigma(B)$ and $L_2:\,\Sigma(B)\to X$
  such that $L_2\circ L_1$ is the identity on $X$.
  \end{Def}

\begin{Rem}
  \begin{em}
    The property of being tame is stable under the usual operations like
    direct sum, product, etc. (cf. \cite{DA}, \cite{H}).
      \end{em}
\end{Rem}

\begin{Ex}
  \label{proptame}
  The space $C^1([0,T]; H^\infty(\R^m))$, $m\geq 1$, endowed with the family of seminorms
  \beqsn
  \3|g\3|_n:=\sup_{[0,T]}\left(\|g(t,\cdot)\|_n+
  \|D_tg(t,\cdot)\|_n\right),\qquad n\in\N_0,
  \eeqsn
  is a tame space.
\end{Ex}

\begin{proof}
Since $H^\infty(\R^m)$ is tame (see \cite{CNT}),
there exist a Banach space $B$ and two tame linear maps
\beqsn
&&L_1:\ H^\infty(\R^m)\longrightarrow \Sigma(B)\\
&&\hspace*{23mm} g\longmapsto\{g_k\}\\
&&L_2:\ \Sigma(B)\longrightarrow H^\infty(\R^m)\\
&&\hspace*{11mm} \{g_k\}\longmapsto g
\eeqsn
such that
\beqs
\label{C1}
&&\|L_1(g)\|_n=\|\{g_k\}\|_n:=\sum_{k=0}^{+\infty}e^{nk}\|g_k\|_B
\leq C_n\|g\|_{n+r}\qquad n\geq n_0\\
\nonumber
&&\|L_2(\{g_k\})\|_n=\|g\|_n\leq C'_n\|\{g_k\}\|_{n+r'}\qquad n\geq n_0
\eeqs
for some $C_n,C'_n,>0$, $n_0,r,r'\in\N$, and
\beqs
\label{C3}
L_2\circ L_1(g)=g\qquad\forall g\in H^\infty(\R^m).
\eeqs

We construct the linear map
\beqsn
\tilde{L}_1:\ C^1([0,T];H^\infty(\R^m))&&\longrightarrow
C^1([0,T];\Sigma(B))\\
g(t,x)&&\longmapsto \tilde{L}_1g(t,x)
\eeqsn
defined, for every fixed $t\in[0,T]$, by
$\tilde{L}_1g=L_1g(t,\cdot)=
\{g_k(t,\cdot)\}$.
Clearly $\{g_k(t,\cdot)\}\in\Sigma(B)$ for all $t\in[0,T]$ by construction.
We now prove that $\tilde{L}_1$ is well defined. Let us first remark
that, since $g\in C^1([0,T];H^\infty(\R^m))$, there exists
$g'\in C^0([0,T];H^\infty(\R^m))$ such that
\beqs
\label{C2}
\lim_{h\to0}\left\|\frac{g(t+h,\cdot)-g(t,\cdot)}{h}-g'(t,\cdot)\right\|_n=0
\qquad\forall n\in\N_0.
\eeqs

In order to show that $\tilde{L}_1g=\{g_k(t,\cdot)\}\in C^1([0,T];\Sigma(B))$
we shall prove that there exists, in $\Sigma(B)$, the $\partial_t$
derivative of $\tilde{L}_1g=\{g_k(t,\cdot)\}$ and this is given by
$L_1(g')=\{(g')_k(t,\cdot)\}$.
Indeed, for all $n\in\N$, from \eqref{C1} and \eqref{C2} we have that
\beqsn
&&\sum_{k=0}^{+\infty}e^{nk}\left\|\frac{g_k(t+h,\cdot)-g_k(t,\cdot)}{h}-
(g')_k(t,\cdot)\right\|_B\\
=&&
\left\|L_1\left(\frac{g(t+h,\cdot)-g(t,\cdot)}{h}-
g'(t,\cdot)
\right)\right\|_n\\
\leq&&C_n\left\|\frac{g(t+h,\cdot)-g(t,\cdot)}{h}-
g'(t,\cdot)\right\|_{n+r}\longrightarrow 0\quad\mbox{as}\ h\to0.
\eeqsn

Therefore $(g_k)'(t,\cdot)=(g')_k(t,\cdot)$ and
the linear map $\tilde{L}_1$
is a linear tame map because of \eqref{C1}:
\beqsn
\3|\tilde{L}_1(g)\3|_n:=&&\sup_{t\in[0,T]}\left(\sum_{k=0}^{+\infty}
e^{nk}\|g_k(t,\cdot)\|_n+\sum_{k=0}^{+\infty}
e^{nk}\|(g_k)'(t,\cdot)\|_n\right)\\
\leq&& C_n\sup_{t\in[0,T]}\left(\|g(t,\cdot)\|_{n+r}+\|g'(t,\cdot)\|_{n+r}
\right)\\
=&&C_n\3|g\3|_{n+r}.
\eeqsn
Analogously we can construct a tame linear map
\beqsn
\tilde{L}_2:\ C^1([0,T];\Sigma(B))&&\longrightarrow
C^1([0,T];H^\infty(\R^m))\\
\{g_k(t,x)\}&&\longmapsto g(t,x)
\eeqsn
defined by $\tilde{L}_2(\{g_k\})(t,\cdot)=L_2(\{g_k(t,\cdot)\})=g(t,\cdot)$ for
all $t\in[0,T]$.

Moreover, $\tilde{L}_2\circ \tilde{L}_1$ is the identity map
on $C^1([0,T];H^\infty(\R^m))$ by construction.

The proof is complete.
\end{proof}

For nonlinear maps, the definition of tame map is given by:
\begin{Def}
  Let $X,Y$ be two graded spaces, $U\subset X$ and $T:\ U\to Y$.
  We say that $T$ satisfies a {\em tame estimate} of degree $r$ and base $n_0$
  if for every integer $n\geq n_0$ there exists a constant $C_n>0$ such that
  \beqs
  \label{tameestimate}
  \|T(u)\|_n\leq C_n(1+\|u\|_{n+r})\qquad\forall u\in U.
  \eeqs
  A map $T$ defined on an open set $U$ is said to be {\em tame} if
  it satisfies a tame estimate \eqref{tameestimate} in a neighbourhood of
  each point $u\in U$ (with constants $r,n_0$ and $C_n$ which may depend on the
  neighbourhood).
\end{Def}

\begin{Rem}
  \begin{em}
    Let us remark that a linear map is tame if and only if it is a
    tame linear map. Moreover, the composition of tame
    maps is tame (cf. \cite{DA}, \cite{H}).
      \end{em}
\end{Rem}

Recalling the notion of {\em Fr\'echet} derivative $DT(u)v$ of a map
$T:\ U\subset X\to Y$ at $u\in U$ in the direction $v\in X$, as
\beqs
\label{2.6DA}
DT(u)v:=\lim_{\epsilon\to0}\frac{T(u+\epsilon v)-T(u)}{\epsilon},
\eeqs
we say that $T$ is $C^1(U)$ if it is differentiable, in the sense that the
limit \eqref{2.6DA} exists, and if the derivative
$DT:\ U\times X\to Y$ is continuous.

Recursively, we can define the successive derivatives
$D^nT:\ U\times X^n\to Y$ and say that $T$ is $C^\infty(U)$ if all the Fr\'echet
derivatives of $T$ exist and are continuous.

\begin{Def}
  Given two graded spaces $X,Y$ and an open subset $U$ of $X$, we say that a
  map $T:\ U\to Y$ is {\em smooth tame} if it is $C^\infty$ and $D^nT$ is
  tame for all $n\in\N_0$.
\end{Def}

\begin{Rem}
  \label{remA1}
  \begin{em}
    Sums and compositions of smooth tame maps are smooth
    tame. Moreover, linear and nonlinear partial differential operators and
    integration are smooth tame (cf. \cite{DA}, \cite{H}).
      \end{em}
\end{Rem}

We finally recall the Theorem of Nash-Moser (see \cite{H}):
\begin{Th}[Nash-Moser-Hamilton]
  \label{thNM}
  Let $X,Y$ be tame spaces, $U$ an open subset of $X$ and $T:\ U\to Y$ a
  smooth tame map. Assume that the equation $DT(u)v=h$ has a unique solution
  $v:=S(u,h)$ for all $u\in U$ and $h\in Y$ and assume that $S:\ U\times Y\to
  X$ is smooth tame. Then $T$ is locally invertible and each local inverse
  is smooth tame.
\end{Th}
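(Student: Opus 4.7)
The plan is to prove local invertibility of $T$ via a Newton-type iteration modified by smoothing operators, following the standard Nash-Moser-Hamilton scheme. Given a base point $u_\ast \in U$ and a target $g$ close to $T(u_\ast)$, I would construct a solution to $T(u) = g$ as the limit of the iteration $u_{k+1} = u_k - S_{\theta_k} v_k$, where $v_k := S(u_k, g - T(u_k))$ solves the linearized problem $DT(u_k)v_k = g - T(u_k)$ and $\{\theta_k\}$ is a sequence of smoothing parameters growing geometrically.

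The first ingredient is the construction of a family of smoothing operators $S_\theta: X \to X$, for $\theta \geq 1$, using the tame structure of $X$. Via the tame embedding $L_1: X \to \Sigma(B)$, I would set $S_\theta u := L_2(\{u_k\}_{k \leq \log\theta})$ (truncating the exponentially decreasing sequence), obtaining the standard interpolation estimates $\|S_\theta u\|_n \leq C_{m,n}\theta^{n-m}\|u\|_m$ for $m \leq n$ and $\|(I - S_\theta)u\|_m \leq C_{m,n}\theta^{m-n}\|u\|_n$ for $m \leq n$. These two inequalities allow trading regularity against a polynomial cost in $\theta$, and they are the engine that compensates for the fixed loss of derivatives $r$ built into the tame estimates for $S$ and $DT$.

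Setting $e_k := T(u_k) - g$, the Taylor expansion of $T$ around $u_k$ gives $e_{k+1} = e_k - DT(u_k) S_{\theta_k} v_k + Q_k$, with $Q_k$ the quadratic remainder controlled in terms of $D^2T$. Since $DT(u_k) v_k = -e_k$, this rearranges to $e_{k+1} = DT(u_k)(I - S_{\theta_k})v_k + Q_k$. Combining the tame bounds on $S$, $DT$, and $D^2T$ with the smoothing and cosmoothing estimates for $S_{\theta_k}$ yields two coupled recursive inequalities: one controlling the low-norm error $\|e_k\|_a$ and one controlling a high-norm quantity $\|u_k\|_b$ with $b > a$ chosen sufficiently large compared to the tame degrees. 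With $\theta_k = \theta_0 \kappa^k$ for an appropriate $\kappa > 1$ and provided $\|e_0\|_b$ is sufficiently small, one shows inductively that $\|e_k\|_a$ decays super-linearly (nearly quadratically) while $\|u_k\|_b$ remains bounded. A bootstrap, exploiting that $T$ is smooth tame, then propagates the convergence $u_k \to u$ to every semi-norm, producing $u \in U$ with $T(u) = g$.

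The main obstacle is the delicate balancing of exponents: each application of $S$ or $DT$ costs $r$ derivatives through the tame estimate, while the smoothing operators only gain derivatives at a polynomial cost in $\theta_k$, and both low and high Sobolev-type norms must be controlled simultaneously along the iteration. The scheme works precisely because Newton's quadratic error term beats the polynomial-in-$\theta_k$ loss from smoothing, but making the bookkeeping rigorous requires choosing the parameters $a$, $b$, $\kappa$, and the size of the neighborhood in a coordinated way. Once this produces a locally defined right inverse $T^{-1}$ depending on $g$, smoothness and tameness in $g$ follow by rerunning the scheme with parameters, differentiating in $g$, and invoking the smooth tameness of $S$ in both of its arguments.
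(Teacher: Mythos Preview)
The paper does not prove Theorem~\ref{thNM} at all: it is stated in Appendix~\ref{app:a} with the comment ``We finally recall the Theorem of Nash-Moser (see \cite{H})'' and is simply quoted from Hamilton's survey, so there is no ``paper's own proof'' to compare against. Your outline is a reasonable high-level sketch of the standard Nash--Moser--Hamilton argument as carried out in \cite{H}: smoothing operators built from the tame embedding into $\Sigma(B)$, a smoothed Newton iteration, and the low-norm/high-norm bookkeeping that makes the quadratic convergence beat the loss of derivatives. Since the paper treats this result as a black box from the literature, there is nothing to add on the comparison side; if you want to actually include a proof, be aware that the sketch you wrote suppresses essentially all of the hard work (the precise choice of the norm indices $a,b$, the induction hypotheses, and the bootstrap to all seminorms), and a full argument along these lines occupies many pages in \cite{H}.
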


\section{Sharp-G{\aa}rding and
Fefferman-Phong ine\-qua\-li\-ties} \label{app:b}

Let $A(x,D_x)$ be a pseudo-differential operator of order $m$ on $\R$
with symbol $A(x,\xi)$ in the standard class $S^m$ defined by
\beqs
\label{calphabeta}
|\partial_\xi^\alpha D_x^\beta A(x,\xi)|\leq C_{\alpha,\beta}
\langle\xi\rangle^{m-\alpha}\qquad
\forall\alpha,\beta\in\N,\
\eeqs
for some $C_{\alpha,\beta}>0$.

The following theorem holds (cf. \cite{KG}):
\begin{Th}[Sharp-G{\aa}rding]
\label{thA1}
Let $A(x,\xi)\in S^m$ and assume that $\Re A(x,\xi)\geq0$.
There exist then pseudo-differential operators $Q(x,D_x)$ and $R(x,D_x)$
with symbols, respectively, $Q(x,\xi)\!\in S^m$ and $R(x,\xi)\in S^{m-1}$,
such that
\beqs
\nonumber
&&A(x,D_x)=Q(x,D_x)+R(x,D_x)\\
\nonumber
&&\Re\langle Q(x,D_x) u,u\rangle\geq0\qquad\forall u\in H^m\\
\label{A1}
&&R(x,\xi)\sim\psi_1(\xi)D_xA(x,\xi)+
\sum_{\alpha+\beta\geq2}\psi_{\alpha,\beta}(\xi)
\partial_\xi^\alpha D_x^\beta A(x,\xi),
\eeqs
with $\psi_1,\psi_{\alpha,\beta}$ real valued functions,
$\psi_1\in S^{-1}$ and $\psi_{\alpha,\beta}\in S^{(\alpha-\beta)/2}$.
\end{Th}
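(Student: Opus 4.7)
The natural strategy is Friedrichs symmetrization, in the form used in Kumano-go's book. First I would reduce to the case of a real non-negative symbol: writing $A=\Re A+i\Im A$, the antihermitian part $(i\Im A-(i\Im A)^{*})/2$ contributes zero to $\Re\langle Au,u\rangle$, and its hermitian part, by the adjoint calculus, differs from $i\Im A$ by a symbol of order $m-1$ whose leading correction is $\tfrac12 \partial_\xi D_x(i\Im A)$, matching the prefactor $\psi_{1,1}\in S^{0}$ in \eqref{A1}. These corrections are collected into the remainder $R$, so we may assume $A(x,\xi)\geq 0$.

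Next I would construct $Q$ by a phase-space ``symmetrization'' at the Friedrichs scale. Fix a real, even $\phi\in C_0^\infty(\R)$ with $\int \phi^{2}=1$, and put
\[
\Phi_{z,\eta}(x):=\langle\eta\rangle^{1/4}\,\phi\bigl(\langle\eta\rangle^{1/2}(x-z)\bigr)\,e^{ix\eta},\qquad
\langle Qu,v\rangle:=\iint_{\R^{2}} A(z,\eta)\,\langle u,\Phi_{z,\eta}\rangle\,\overline{\langle v,\Phi_{z,\eta}\rangle}\,dz\,d\eta.
\]
Then $\Re\langle Qu,u\rangle=\iint A(z,\eta)|\langle u,\Phi_{z,\eta}\rangle|^{2}dz\,d\eta\geq 0$ is immediate from $A\geq 0$, and a direct stationary-phase/oscillatory-integral computation shows that $Q$ is a pseudo-differential operator of order $m$ with principal symbol $A$.

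The explicit asymptotic expansion of $R:=A(x,D_x)-Q$ would then follow from Taylor expanding the double symbol
$
q(x,y,\xi)=\iint F(x,\xi;z,\eta)\,A(z,\eta)\,F(y,\xi;z,\eta)\,dz\,d\eta
$
around the point $(x,\xi)$ and reducing it to a left symbol. The Friedrichs scaling $(x-z)\sim\langle\eta\rangle^{-1/2}$ and $(\xi-\eta)\sim\langle\eta\rangle^{+1/2}$ means that a term of the form $(\xi-\eta)^\alpha(z-x)^\beta \partial_\xi^\alpha D_x^\beta A/\alpha!\beta!$ in the Taylor expansion contributes, after integration against $F^{2}$, a symbol of order $m-\alpha+\tfrac{\alpha-\beta}{2}=m-\tfrac{\alpha+\beta}{2}$; this is exactly the class $S^{(\alpha-\beta)/2}$ in front of $\partial_\xi^\alpha D_x^\beta A\in S^{m-\alpha}$ claimed in \eqref{A1}. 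The symmetric odd moments $\int t\,\phi(t)^{2}\,dt=0$ annihilate the first-order Friedrichs contribution, but the passage from the double symbol $q(x,y,\xi)$ to a single left symbol produces, via the standard formula $\sigma_L(q)(x,\xi)\sim \sum_\alpha \tfrac{1}{\alpha!}\partial_\xi^\alpha D_y^\alpha q(x,y,\xi)|_{y=x}$, an asymmetric correction whose $\alpha=1$ term is precisely of the form $\psi_1(\xi)D_x A(x,\xi)$ with $\psi_1\in S^{-1}$. This accounts for the isolated first term in \eqref{A1}, while all remaining contributions fall into the general sum with $\alpha+\beta\geq 2$.

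The main obstacle will be the combinatorial bookkeeping that guarantees every resulting coefficient lies in exactly the claimed class $S^{(\alpha-\beta)/2}$: one has to combine the Taylor coefficients of $A$, the moments $\int t^{2k}\phi(t)^{2}dt$, the derivatives of $\langle\eta\rangle^{1/2}$ (which produce further $\langle\eta\rangle^{-1}$ factors and whose effect has to be merged with the $\psi_{\alpha,\beta}$), and the corrections coming from Step 1 and from the double-to-left-symbol reduction, and check that nothing escapes the symbol class $S^{m-1}$ for the full remainder. The positivity and the $S^m$-boundedness of $Q$, by contrast, are essentially routine once the coherent-state definition is in place.
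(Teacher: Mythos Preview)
The paper does not actually prove this theorem: it is stated in Appendix~B as a known result and attributed to Kumano-go \cite{KG} (``The following theorem holds (cf.~\cite{KG})''), with no argument given. So there is nothing to compare your proof against within the paper itself.

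That said, your Friedrichs-symmetrization outline is exactly the approach taken in \cite[Ch.~3, Thm.~4.2]{KG}, which is the reference the authors invoke. The construction of $Q$ via the wave-packet/coherent-state integral, the Taylor expansion of the double symbol at the Friedrichs scale $\langle\eta\rangle^{1/2}$, and the identification of $\psi_{\alpha,\beta}\in S^{(\alpha-\beta)/2}$ are all standard and correct. One small point: your reduction to a real non-negative symbol is fine conceptually, but in Kumano-go's proof the Friedrichs operator is applied directly to the complex symbol $A$; positivity of $\Re\langle Qu,u\rangle$ then follows from $\Re A\geq 0$ since the wave-packet kernel is real. This avoids having to track the hermitian/antihermitian split separately and slightly simplifies the bookkeeping you flag as the main obstacle.
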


\begin{Rem}\label{remA}\begin{em}
Terms of the form $\psi_{\alpha,\beta}(\xi)
\partial_\xi^\alpha D_x^\beta A(x,\xi)\in S^{m-(\alpha+\beta)/2}$,
$\alpha+\beta\geq 2$,
of \eqref{A1} can be rearranged so that we have
\beqsn
R(x,\xi)\sim\sum_{\ell\geq 1}R(x,\xi)\big|_{\ord(m-\ell)},
\eeqsn
where
\beqs
\label{Asc}
R(x,\xi)\big|_{\ord(m-\ell)}=\left\{\begin{array}{ll}
\psi_1(\xi)D_xA(x,\xi)+\ds\sum_{2\leq
  \alpha+\beta\leq3}\psi_{\alpha,\beta}(\xi)\partial_\xi^\alpha
D_x^\beta A(x,\xi), & \quad\ell=1,
\\
\ds\sum_{2\ell\leq
  \alpha+\beta\leq2\ell+1}\psi_{\alpha,\beta}(\xi)\partial_\xi^\alpha
D_x^\beta A(x,\xi),
& \quad\ell\geq 2.
\end{array}\right.
\eeqs
\end{em}
\end{Rem}

\begin{Rem}
\label{remA2}
\begin{em}
Theorem \ref{thA1} implies the well-known sharp-G{\aa}rding inequality
\beqs
\label{A2}
\Re\langle A(x,D_x)u,u\rangle\geq-c\|u\|_{(m-1)/2}^2
\eeqs
for some fixed constant $c>0$.
\end{em}
\end{Rem}

A well-known refinement of \eqref{A2} is given by the following theorem (cf. \cite{FP}):
\begin{Th}[Fefferman-Phong inequality]
\label{thA3}
Let $A(x,\xi)\in S^m$ with $A(x,\xi)\geq0$. Then
\beqs
\label{A3}
\Re\langle A(x,D_x)u,u\rangle\geq-c\|u\|_{(m-2)/2}^2\qquad \forall u\in H^m
\eeqs
for some $c>0$ depending only on the constants $C_{\alpha,\beta}$ in \eqref{calphabeta}.
\end{Th}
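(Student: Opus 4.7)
The plan is to prove Theorem \ref{thA3} along the classical Fefferman--Phong route: first reduce to a model order, then perform a Calderón--Zygmund/Whitney decomposition of phase space adapted to $A$, and finally assemble the local estimates via an almost-orthogonality argument.

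As a preliminary reduction I would work in the Weyl calculus and conjugate $A$ by $\langle D_{x}\rangle^{(m-2)/2}$: writing $B := \langle D_{x}\rangle^{-(m-2)/2} A \langle D_{x}\rangle^{-(m-2)/2}$, the symbolic calculus produces $B = B_{0}+R$, where the principal part $B_{0}\in S^{2}$ has Weyl symbol $\langle\xi\rangle^{-(m-2)}A(x,\xi)\geq 0$ and the remainder $R$ lies in $S^{1}$. The sharp-G\aa rding inequality (Theorem \ref{thA1}) absorbs $R$ with a loss of only $\|u\|_{0}^{2}$. It therefore suffices to establish $\Re\langle A(x,D_{x})u,u\rangle\geq -c\|u\|_{0}^{2}$ for a nonnegative symbol $A\in S^{2}$, since passing between standard and Weyl quantisations introduces only lower-order terms that the same sharp-G\aa rding argument handles.

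The heart of the proof is a Whitney-type decomposition of $\mathbb{R}^{2}_{x,\xi}$ into cubes $\{Q_{\nu}\}$ of side-length $\rho_{\nu}\in(0,1]$ selected by the Fefferman--Phong rule: one keeps bisecting a cube $Q$ as long as $\sup_{Q}A$ fails to dominate $(\mathrm{diam}\,Q)^{2}\cdot\sup|\partial^{2}A|$, and stops otherwise. On the resulting cubes exactly one of two alternatives holds. In the \emph{elliptic} alternative one has $A\geq c_{0}\rho_{\nu}^{2}$ on a definite fraction of $Q_{\nu}$; here I would apply Theorem \ref{thA1} to $A-c_{0}\rho_{\nu}^{2}$, producing a loss of order $\rho_{\nu}$ that is absorbed by $\rho_{\nu}^{2}$. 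In the \emph{degenerate} alternative, $A$ is well approximated on $Q_{\nu}$ by its second-order Taylor polynomial $P_{\nu}$ at a near-minimiser $z_{\nu}$, with remainder $O(\rho_{\nu}^{4})$; since $P_{\nu}\geq 0$ is a quadratic form in two symplectic variables, a symplectic diagonalisation converts its Weyl quantisation into a rescaled harmonic oscillator shifted by a constant, whose spectrum is bounded below by an explicit absolute constant.

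The remaining, most delicate step is the global assembly. I would pick a partition of unity $\{\chi_{\nu}\}$ subordinate to $\{Q_{\nu}\}$ with $\chi_{\nu}$ in the H\"ormander class $S(1,g_{\nu})$ attached to the slowly varying, temperate metric $g_{\nu}=\rho_{\nu}^{-2}(|dx|^{2}+|d\xi|^{2})$, and then apply the Cotlar--Stein lemma to the family $\{\chi_{\nu}^{w}A^{w}\chi_{\nu}^{w}\}$. The main obstacle lies precisely here: one must verify the almost-orthogonality bounds for the Weyl products $\chi_{\mu}\sharp\chi_{\nu}$ and control the commutators $[\chi_{\nu}^{w},A^{w}]$ using the symbolic calculus on $(Q_{\nu},g_{\nu})$. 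Once these combinatorial estimates are in place, the local lower bounds sum to the desired global inequality, with a constant $c$ depending only on finitely many of the seminorms $C_{\alpha,\beta}$ of $A$ (essentially for $|\alpha|+|\beta|\leq 4$), as claimed.
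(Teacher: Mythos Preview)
The paper does not prove Theorem~\ref{thA3}; it is simply quoted from \cite{FP}, with Remark~\ref{remA4} citing \cite{LM} for the dependence of the constant on finitely many symbol seminorms. There is therefore no ``paper's own proof'' to compare against. Your outline is the classical Fefferman--Phong scheme (order reduction, Calder\'on--Zygmund stopping-time decomposition of phase space, local lower bounds, Cotlar--Stein reassembly) and is a reasonable sketch in this one-dimensional setting, though the degenerate-cube step---reducing to a single harmonic oscillator via symplectic diagonalisation---glosses over the iterative structure that, in the general proof, becomes an induction on the number of variables after extracting a perfect square from the Taylor polynomial.

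One concrete point to flag: you assert that $c$ depends only on the seminorms $C_{\alpha,\beta}$ with $|\alpha|+|\beta|\leq 4$. The paper does not use this; it relies instead on the Lerner--Morimoto bound \cite{LM}, which controls $c$ by seminorms with $|\alpha|+|\beta|\leq 7$, and this is exactly the origin of the hypothesis $\gamma\in C^{7}$ in Theorem~\ref{th1} (see Step~4 of the proof of Theorem~\ref{iop}, where \eqref{tt2} is derived). A sharper four-derivative dependence is indeed known in the literature, but it is not what the paper invokes, so if your argument is meant to plug into the main result you should either match the bound of $7$ or explicitly substitute the stronger reference and adjust the regularity assumption on $\gamma$ accordingly.
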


\begin{Rem}
  \label{remA4}
  \begin{em}
    By \cite{LM} we know that for $m=2$ the constant $c$ in \eqref{A3} depends
    only on $\max_{|\alpha|+|\beta|\leq 7}C_{\alpha,\beta}$, for $C_{\alpha,\beta}$
    as in \eqref{calphabeta}.
  \end{em}
\end{Rem}

\end{appendices}

{\bf Acknowledgment.}
The authors were partially supported by the INdAM-GNAMPA Project 2015
``Equazioni Differenziali a Derivate Parziali di Evoluzione e Stocastiche''.

\end{document}